%% filename: amsart-template.tex
%% version: 1.1
%% date: 2014/07/24
%%
%% American Mathematical Society
%% Technical Support
%% Publications Technical Group
%% 201 Charles Street
%% Providence, RI 02904
%% USA
%% tel: (401) 455-4080
%%      (800) 321-4267 (USA and Canada only)
%% fax: (401) 331-3842
%% email: tech-support@ams.org
%%
%% Copyright 2008-2010, 2014 American Mathematical Society.
%%
%% This work may be distributed and/or modified under the
%% conditions of the LaTeX Project Public License, either version 1.3c
%% of this license or (at your option) any later version.
%% The latest version of this license is in
%%   http://www.latex-project.org/lppl.txt
%% and version 1.3c or later is part of all distributions of LaTeX
%% version 2005/12/01 or later.
%%
%% This work has the LPPL maintenance status `maintained'.
%%
%% The Current Maintainer of this work is the American Mathematical
%% Society.
%%
%% ====================================================================

%     AMS-LaTeX v.2 template for use with amsart
%
%     Remove any commented or uncommented macros you do not use.

\documentclass[10pt,reqno]{amsart}
\usepackage{graphicx,epsfig}
\usepackage{epstopdf}
\usepackage{pdfpages}
\usepackage{amssymb}
\usepackage{empheq}
\usepackage{cases}
\usepackage{amsthm,amsmath}
\usepackage{caption,lipsum}
\usepackage{stmaryrd}
\usepackage{tabularx}
\usepackage{color}
\usepackage{empheq,float}
\DeclareGraphicsExtensions{.eps}

\newtheorem{theorem}{Theorem}[section]
\newtheorem{lemma}[theorem]{Lemma}

\theoremstyle{definition}

\newtheorem{example}[theorem]{Example}

\theoremstyle{remark}

\newcommand{\fe}{\mathrm{e}}

\newcommand{\bR}{{\mathbb R}}
\newcommand{\bC}{{\mathbb C}}

\newcommand{\bT}{{\mathbb T}}
\newcommand{\bN}{{\mathbb N}}
\newcommand{\bZ}{{\mathbb Z}}

\numberwithin{equation}{section}

\textheight 22.6 cm \topmargin -2.5 mm
\textwidth 15.2 cm \oddsidemargin 0.15 cm
\begin{document}

\title[Convergence of schemes for disordered NLS]{Numerical integrators for continuous disordered nonlinear Schr\"odinger equation}

%\author[K. Schratz]{Katharina Schratz}
%\address{\hspace*{-12pt}K.~Schratz: Laboratoire Jacques-Louis Lions, Sorbonne Universit\'e, 75252 Paris, France}
%\email{katharina.schratz@ljll.math.upmc.fr}

\author[X. Zhao]{Xiaofei Zhao}
\address{\hspace*{-12pt}X.~Zhao: School of Mathematics and Statistics \& Computational Sciences Hubei Key Laboratory, Wuhan University, 430072 Wuhan, China}
\email{matzhxf@whu.edu.cn}

%\subjclass[2010]{Primary }
%
%\keywords{Vlasov-Poisson equation, Three dimensions, Strong magnetic field, Varying direction, Uniformly accurate method, Particle-In-Cell}

\date{}

\dedicatory{}

\begin{abstract}
In this paper, we consider the numerical solution of the continuous disordered nonlinear Schr\"odinger equation, which contains a spatial random potential. We address the finite time accuracy order reduction issue of the usual numerical integrators on this problem, which is due to the presence of the random/rough potential. By using the recently proposed low-regularity integrator (LRI) from (33, SIAM J. Numer. Anal., 2019), we show how to integrate the potential term by losing two spatial derivatives. Convergence analysis is done to show that LRI has the second order accuracy in $L^2$-norm for potentials in $H^2$. Numerical experiments are done to verify this theoretical result. More numerical results are presented to investigate the accuracy of LRI compared with classical methods under rougher random potentials from applications.
 \\ \\
{\bf Keywords:}  disordered nonlinear Schr\"odinger equation, spatial random potential, numerical integrators, low-regularity, accuracy \\ \\
{\bf AMS Subject Classification:} 35Q55, 65L20, 65L70, 65M12, 65M15, 82C44.
\end{abstract}

\maketitle

\section{Introduction}
In this work, we are concerned with the following one-dimensional disordered nonlinear Schr\"{o}dinger equation (D-NLS) on a torus or whole space which contains a spatial random potential \cite{debussche3,debussche1,CAL2,Soffernew}:
\begin{equation}\label{model}
 \left\{\begin{split}
& i\partial_tu(x,t)=-\partial_{x}^2 u(x,t)+\xi(x) u(x,t)+\lambda|u(x,t)|^2u(x,t),
 \quad t>0,\ x\in\mathcal{D},\\
 &u(x,0)=u_0(x),\quad x\in\mathcal{D},
 \end{split}\right.
\end{equation}
where $\mathcal{D}=\bR$ or $\bT$ (a torus), $u(x,t):\mathcal{D}\times[0,\infty)\to\bC$ is the unknown, $u_0(x)$ is the given initial data, $\xi(x):\mathcal{D}\to\bR$ is a given random potential, and $\lambda\in\bR$ is a given parameter with $\lambda>0$ for defocusing nonlinear interaction and $\lambda<0$ for focusing case. The D-NLS
(\ref{model}) is of paramount importance in mathematical and physical studies. For example,  mathematically, it can be considered as the complex parabolic Anderson model \cite{debussche3,debussche1}. Physically, it has been considered for modeling the Anderson localization of Bose-Einstein Condensates \cite{pra,PRLnew} and for the study of nonlinear dispersive wave dynamics in a disordered medium \cite{Conti,nonlinear wave,CAL2}. The spatial random potential $\xi(x)$, as considered in many related numerical works and physical applications in the literature, is composed out of a large number of random independent and identically distributed (i.i.d.) Fourier components \cite{sde3,sde1,pre3,CAL2,sde2,pra,PRLnew,whitenoise}.

The D-NLS (\ref{model}) in fact has received many mathematical attentions and we refer to \cite{Chouk,debussche2,debussche3,debussche1,Labbe,CAL1,Gu} for intensive theoretical studies from different aspects.
It has a discrete analogue known as the nonlinear Schr\"{o}dinger equation on the disordered lattice \cite{CAL0,Wang2}:
%For each $x\in\bT$, the potential $\xi(x)$ is taken as a uniform distribution in some interval $[-\omega,\omega]$ with $\omega>0$.
%The discrete version of (\ref{model}), for example in 1D known as the nonlinear Schr\"{o}dinger equation on a disordered lattice:
%\begin{equation}\label{model2}
% \left\{\begin{split}
\begin{equation}\label{discrete}i\dot{u}_l(t)=-J[u_{l+1}(t)+u_{l-1}(t)]
+\xi_lu_l(t)+\lambda|u_l(t)|^2u_l(t),
 \quad t>0,\ l\in\bZ,\end{equation}
% &\psi_l(0)=\psi_l^0,\quad z\in\bZ,
% \end{split}\right.
%\end{equation}
where $J\in\bR$ is a given parameter and $\xi_l$ are a series of i.i.d. random variables. This discrete model has been widely addressed (more than (\ref{model}) so far) in the physical studies, e.g.  \cite{Anderson,review,prl2,prl1,pre1,nature1,pre2}. As a matter of fact, it shares many physical properties \cite{review} with the continuous version (\ref{model}). For the linear regime of (\ref{model}) or (\ref{discrete}), i.e. $\lambda=0$, it is well-understood as the famous Anderson localization \cite{Anderson} which tells that spreadings of any localized waves will be suppressed by the random potential.
For the nonlinear regime $\lambda\neq0$ in (\ref{model}) or (\ref{discrete}),  the competition between the random potential and the nonlinearity makes the long-time dynamics very complicated. For an initially
localized wavepacket in the whole space setup, heuristically, the spreadings in the nonlinear system (\ref{model}) or (\ref{discrete}) could still be suppressed by the random potential, otherwise the nonlinearity would become negligible as time gets large and then the problem goes back to the classical Anderson localization \cite{review}.
The theoretical understandings of this issue are quite challenging and so far the rigours results are still limited \cite{Wang1,review,Soffernew,Wang2}. Therefore, many existing works  rely on the numerical simulations \cite{review,prl2,CAL2,prl1,pre1,nature1,pre2}, and the majority of them are made on the above discrete version (\ref{discrete}).
While, the numerical research outcomes so far conflict with the heuristical argument and unlimited spreadings were observed \cite{prl2,prl1}.

 %However, existing numerical simulations do not support this heuristic argument \cite{prl1,prl2,pre1}.
%One reason could be the chaotic behaviour in the equation (\ref{model}) in large time. The other issue as we will report here for the continuous model is the accuracy of the numerical integrators. This obstacle makes numerical investigations of (\ref{model}) more challenging than the discrete model (\ref{model2}).

From the numerical point of view, there is some significant difference between solving the continuous D-NLS
model (\ref{model}) and the discrete one  (\ref{discrete}).
%(\ref{model}) has been proved to be well-posed in  \cite{debussche1,debussche3,Labbe,Chouk}.
In the existing numerical studies in the literature, e.g. \cite{prl2,prl1,pre1}, some classical symplectic integrators including the splitting schemes and finite difference schemes of the second or the fourth order accuracy were used to solve the discrete model. The high order accuracy and the symplecticity of the schemes on the ODEs could  provide reliable  long-term approximations of (\ref{discrete}). However, it is far from straightforward to extend these numerical works from the discrete model (\ref{discrete}) to the continuous model (\ref{model}). The challenging issues  for solving (\ref{model}) include the domain truncation of whole space problem for long-time computing, the efficient and accurate spatial discretization, the long-time behaviour of the fully discretized integrators on the PDE, and even the accurate temporal approximation up to a fixed finite time, where the last issue is what we shall address in this paper. For attempts to control the temporal approximation error to arbitrary time, we refer to the work \cite{Soffer}. For the studies of the nonlinear Schr\"{o}dinger equations with time-dependent random/rough potentials, we refer to \cite{sde3,debussche2,sde1,sde4,sde2,Schlag}.

For the popular classical numerical integrators such as Strang splitting method or Crank-Nicolson finite difference method, when applied to the continuous D-NLS model (\ref{model}) as we shall see in our numerical experiments, their accuracy order could be much less than the optimal second order rate. This is because the regularity of the spatial noise $\xi(x)$ could be very low (even in negative Sobolev space in some cases \cite{whitenoise2,whitenoise}). This directly illustrates the problem of using high order splitting schemes  for integrating (\ref{model}), that the high order commutators \cite{Lubich,Thalhammer} generate high order  differentiations on $\xi$, and so the unboundedness of such terms causes significant  accuracy  order reduction. For instance, without the  boundedness of $\partial_x^4\xi$, the Strang splitting method will fail to offer the second order accuracy. Moreover, the potential will induce roughness to the solution of (\ref{model}).
 Recently, when the potential $\xi(x)$ is assumed to be a Gaussian white noise,
the well-posedness of the Cauchy problem (\ref{model}) in  torus has been established only in $H^1$ space \cite{debussche3}, and its well-posedness in the whole space case has later been given in  \cite{debussche1}. We will show in our study of (\ref{model}) that for $\xi\in H^2$ and smooth initial data $u_0$, the regularity of the solution is just in $H^4$. Such solution with low regularity also causes order reductions for classical integrators. For instance, the  Crank-Nicolson method requests the boundedness of the six order spatial derivatives of the solution to reach its second order accuracy. For a rigorous analysis of Crank-Nicolson method, we refer to \cite{dnls-fd}. To overcome the order reduction problem due to the lack of regularity in the solution, some state-of-the-art called the low-regularity Fourier integrators (LRI) have been proposed very recently for the nonlinear Schr\"{o}dinger equations without the potential \cite{Schratznew,kath1,lownls,lowNLS2}, where the roughness was introduced through the initial data. The particular efforts of LRI were made to integrate the nonlinearity in physical space by losing two spatial derivatives for the second order accuracy in time. In the contrast to the setup in \cite{Schratznew,kath1,lownls,lowNLS2}, many physical problems related to (\ref{model}) consider a smooth initial input \cite{CAL2,pra,PRLnew,nature1}.
%the local truncation error of the schemes such as the second order splitting schemes \cite{Lubich,lownls} contain the principle error term $O(\tau^3\partial_x^4u)$ (with $\tau$ the time step), and due to the presence of the random potential $\xi(x)$, the solution of (\ref{model}) is not smooth enough even under smooth initial profile  $u_0$. It is known that the regularity of the spatial noise could be very low  \cite{whitenoise,whitenoise2}.

The purpose of this work is to develop the LRI method for solving the D-NLS (\ref{model}), and we shall consider the random potential $\xi(x)$ generated either by the Gaussian distribution or uniform distribution with regularity at most $H^2$ or $L^2$.  We shall first review some classical methods and address their order reductions on (\ref{model}) with more details. Then
to raise the accuracy order, we consider the LRI method from \cite{kath1} and extend it to integrate  (\ref{model}). Special efforts are made here to integrate the potential term in (\ref{model}) up to the second order accuracy by losing two spatial derivatives, and to keep the final form of the scheme explicitly defined in the physical space so that it can be programmed efficiently under the Fourier pseudo-spectral method for spatial discretization.
Meanwhile, efforts are also made to avoid CFL-type stability conditions in the scheme, which is very important since the roughness in space requires very small spatial mesh size for accurate approximations. We will prove the quadratic convergence in time of the proposed LRI in $L^2$-norm for $\xi\in H^2$. For rougher potential case, e.g. $\xi\in L^2$, many numerical experiments are done to show that the expectation of the $L^2$-error of LRI  converges in time at a rate between one and two. Comparisons are made with the classical methods.

The rest of the paper is organised as follows. In Section \ref{sec:method}, we review some classical integrators for nonlinear Schr\"{o}dinger equations. In Section \ref{sec:lri}, we derive the LRI scheme for solving (\ref{model}) with convergence analysis in the case $\xi\in H^2$. Numerical experiments with general random potentials are given in Section \ref{sec:result}, and some conclusions are drawn in Section \ref{sec:con}.

%By this study, we hope to provide a better understanding for numerical simulations of (\ref{model}).

\section{Classical numerical methods}\label{sec:method}
In this section, we will briefly review some popular numerical integrators including the splitting methods and the finite difference methods for solving the nonlinear Schr\"odinger equations. In fact, these methods have been considered to integrate the discrete model (\ref{discrete}) in the physical literatures, and here we present them for the continuous D-NLS    (\ref{model}).

For the computational reason, we would always consider the torus setup of  (\ref{model}), i.e. $x\in\bT=(-L,L)$. By choosing the domain size $L>0$ large enough, this is also an approximation of the whole space case when the waves are away from the boundary during the dynamics. We denote $\tau=\Delta t>0$ as the time step, $t_n=n\tau$ as the time grids and $u^n=u^n(x)\approx u(x,t_n)$ for $n\geq0$ with $u^0=u_0$. For simplicity, to present the schemes and some known convergence results, let us consider  a sampled or deterministic $\xi(x)$ that could potentially be very rough.

\subsection{Splitting scheme} The time-splitting type schemes \cite{Splitting} have been widely used to solve nonlinear Schr\"{o}dinger equations \cite{BaoCai,Bao,Faou,Lubich}.
The method is to split (\ref{model}) into two subproblems:
\begin{equation*}%\label{sub1}
\Phi_T^t:\quad \left\{
\begin{split}
&i\partial_sv(x,t)=-\partial_{x}^2 v (x,t),\quad t>0,\ x\in\bT,\\
&v(x,0)=v_0(x),\quad x\in\bT,
\end{split}\right.
\end{equation*}
and
\begin{equation*}%\label{sub2}
\Phi_V^t:\quad\left\{
\begin{split}
&i\partial_sw(x,t)=\left(\xi(x)+\lambda|w(x,t)|^2\right)w(x,t),\quad t>0,\ x\in\bT,\\
&w(x,0)=w_0(x),\quad x\in\bT.
\end{split}\right.
\end{equation*}
For both of the subproblems, we have the exact solutions:
$$v(x,t)=\Phi_T^t(v_0(x))=\fe^{it\partial_{x}^2}v_0(x),\qquad
w(x,t)=\Phi_V^t(w_0(x))=\fe^{-it(\xi(x)+\lambda|w_0(x)|^2)}w_0(x).$$
Then through composition, the classical Lie-Trotter splitting method for (\ref{model}) reads:
\begin{eqnarray*}
u(x,t_{n+1})\approx\Phi_V^{\tau}\circ\Phi_T^{\tau}(u(x,t_{n})),\quad n\geq0,
\end{eqnarray*}
which is of the first order accuracy in time under usual smooth enough setup \cite{Besse}. By further  requesting more smoothness on the initial data and the potential in (\ref{model}), with more times of composition, one could in principle get the high order accurate splitting methods \cite{Besse,SplittingSINUM,Hairer,Splitting,lownls,Lubich,Thalhammer}. Among the second order methods, the most popular one is the Strang splitting method:
\begin{eqnarray}\label{strang}
u^{n+1}(x)=\Phi_V^{\tau/2}\circ\Phi_T^{\tau}\circ\Phi_V^{\tau/2}(u^n(x)),
\quad n\geq0,\ x\in\bT.
\end{eqnarray}
The scheme is time-symmetric, symplectic and it preserves the mass of the Schr\"{o}dinger equation \cite{Bao,Faou}.

It is well known that the local truncation error of Strang splitting (\ref{strang}) is determined by the double Lie commutator of the kinetic part and the potential part of the equation (\ref{model}), i.e. with notations from \cite{Lubich},
$$local\ error =O\left(\tau^3[\hat{T},[\hat{T},\hat{V}]](u)\right),\quad \mbox{with}\quad \hat{T}(u)=\partial_x^2 u,\quad \hat{V}(u)=(\xi+\lambda|u|^2)u.$$
That is to say the truncation error of Strang splitting for (\ref{model}) contains the terms $O(\tau^3\partial_x^4u)$ and $O(\tau^3\partial_x^4\xi)$. In order to reach the optimal  accuracy of the Strang splitting scheme (\ref{strang}), one will then ask for enough smoothness on both the potential $\xi$ and the initial data $u_0$ in (\ref{model}). By using the analysis from \cite{Lubich} without essential modifications, for a given $\xi(x)$, the global approximation error of Strang splitting scheme (\ref{strang}) for solving (\ref{model}) up to a fixed final time $T>0$ reads
$$\|u-u^n\|_{L^2}=O(\tau^2),\quad 0\leq n\leq T/\tau,\quad \mbox{if}\quad \xi, u_0\in H^4(\bT).$$
For higher order splitting schemes and their convergence results under smooth potential and initial data case, we refer to \cite{Thalhammer}.

%Here, we consider another second order symplectic splitting method which is known as the SBAB2 method from \cite{SBAB}:
%\begin{eqnarray}\label{SBAB}
%u^{n+1}(x)=\Phi_V^{\tau/6}\circ\Phi_T^{\tau/2}\circ\Phi_V^{2\tau/3}
%\circ\Phi_T^{\tau/2}\circ\Phi_V^{\tau/6}(u^n(x)),
%\quad n\geq0,\ x\in\bT.
%\end{eqnarray}
The Strang splitting scheme (\ref{strang}) has some analogies such as the SBAB method \cite{SBAB}, which are also second order accurate under smooth setup.
These splitting methods have been applied to integrate the discrete model (\ref{discrete}) with uniform distribution random potential on a very large time interval in \cite{prl2,prl1,pre1}, and unlimited subdiffusions of initially localized waves were observed. Numerically, the performance of the SBAB method is similar to the Strang splitting, so here we just consider the Strang splitting as a representative from this class of methods. The  spatial discretizations of (\ref{strang})  can be made by the Fourier
pseudo-spectral method \cite{Shen} thanks to the periodic setup.

\subsection{Finite difference method} As the most traditional numerical  methods, different kinds of finite difference discretizations have been proposed for the nonlinear Schr\"{o}dinger equations \cite{AkrivisFD,BaoCai,dnls-fd,Sanz-SernaFD,WTC}. Here, we consider a second order semi-implicit finite difference integrator (FD) for (\ref{model}):
\begin{align}\label{FD}
&i\frac{u^{n+1}(x)-u^{n-1}(x)}{2\tau}=-\frac{1}{2}\partial_{x}^2
\left[u^{n+1}(x)+u^{n-1}(x)\right]
+\xi (x)u^{n}(x)+\lambda|u^n(x)|^2u^n(x),
\quad n\geq1,\ x\in\bT,\end{align}
with the starting value obtained from a first order version:
\begin{align*}
&i\frac{u^{1}(x)-u^{0}(x)}{\tau}=-\partial_{x}^2
u^{1}(x)
+\xi (x)u^{0}(x)+\lambda|u^0(x)|^2u^0(x),\quad x\in\bT.
\end{align*}
The implicit and explicit treatments in the above scheme are to avoid CFL-type  stability conditions and meanwhile to make it efficient for programming  under the Fourier pseudo-spectral method for spatial discretizations.

This FD method has also been applied to solve the discrete model (\ref{discrete}) in \cite{prl2,prl1,pre1}, where the same long-term  dynamics were observed as from the splitting schemes. By Taylor's expansion, it is clear that
the local truncation error in (\ref{FD}) depends on $O(\tau^3\partial_{t}^3u)$ and $O(\tau^3\partial_x^2\partial_t^2u)$. The boundedness of these two terms, under the smooth potential case, is equivalent to $u\in H^6$ by the equation (\ref{model}). So as has been established in \cite{AkrivisFD,BaoCai1,dnls-fd,Sanz-SernaFD,WTC}, the global error of the FD method (\ref{FD}) for solving (\ref{model}) up to a fixed $T>0$ is
$$\|u-u^n\|_{L^2}=O(\tau^2),\quad 0\leq n\leq T/\tau, \quad \mbox{if}\quad \xi\in C^\infty(\bT),\ u\in H^6(\bT).$$
For the rough potential case of $\xi\in L^\infty(\bT)$, a rigorous first order convergence result has been established in \cite{dnls-fd}.
%Some eighth-order Runge-Kutta method was considered in \cite{pre1}.

We will apply the above presented second order classical methods to integrate the continuous D-NLS model (\ref{model}), and study in particular their practical accuracy order in Section \ref{sec:result} by numerical experiments. As we shall see, all of them will suffer from accuracy order reductions, and so this gives us no motivation to include higher order methods here for (\ref{model}), although some fourth order or eighth-order extensions of the presented second order scheme (\ref{strang}) have also been considered in the physical studies of the discrete problem (\ref{discrete}) in \cite{pre1}.

As can be seen from above, the convergence order reduction problem is essentially due to the lack of enough regularity in the potential $\xi$ as well as in the solution $u$ of D-NLS (\ref{model}). In some numerical recent work \cite{kath1, lownls} on nonlinear Schr\"odinger equations, the low-regularity of the solution and the order reduction problem of classical methods have also been addressed, where the roughness was introduced to the  equation through the initial data. Here in D-NLS (\ref{model}), the difference is that the roughness is induced inherently to the solution $u$ by the spatial random potential $\xi$, even under smooth initial input $u_0$ as we shall discuss in details later. Therefore, numerically more special efforts are needed to take care of both the potential and the solution.

\section{Low-regularity Fourier integrator}\label{sec:lri}
To raise the accuracy of temporal approximation, in this section,
we will consider the low-regularity Fourier integrator (LRI) from \cite{kath1} to solve the D-NLS (\ref{model}). The LRI has been proposed originally to integrate the cubic nonlinearity in the equation with the second order accuracy  by losing two spatial derivatives of the solution. Here, we need some particular efforts to handle the potential term in (\ref{model}).
We shall first derive the scheme and then analyze its convergence.

We shall follow some of the notations used in \cite{kath1}, and for simplicity, the spatial variable $x$ of the functions will be omitted, e.g.  $u(t)=u(x,t)$ and $\xi=\xi(x)$, $x\in\bT$.
\subsection{Derivation of scheme}
By introducing the twisted variable,
\begin{equation}\label{v def}
v(t)=\fe^{-it\partial_{x}^2}u(t),\quad t\geq0,\ x\in\bT,
\end{equation}
(\ref{model}) becomes
$$i\partial_tv(t)=\fe^{-it\partial_x^2}\left[\xi\fe^{it\partial_x^2}v(t)
+\lambda|\fe^{it\partial_x^2}v(t)|^2\fe^{it\partial_x^2}v(t)\right],\quad t>0,\ x\in\bT,$$
and then by using the Duhamel's formula, the mild of solution of (\ref{model}) at $t=t_n+s$ for some $n\geq0$ and $ 0\leq s\leq\tau$ reads
\begin{align}
 &v(t_{n}+s)\label{vduhamel}\\
=&v(t_n)-i\fe^{-it_n\partial_{x}^2}\int_0^s\fe^{-i\rho\partial_{x}^2}
  \left[\xi\fe^{i(t_n+\rho)\partial_{x}^2}v(t_n+\rho)
  +\lambda|\fe^{i(t_n+\rho)\partial_{x}^2}v(t_n+\rho)|^2\fe^{i(t_n+\rho)
  \partial_{x}^2}v(t_n+\rho)\right]d\rho.\nonumber
\end{align}
By denoting for simplicity
\begin{equation}\label{notation Vn} V_n(s)=\fe^{it_n\partial_{x}^2}v(t_n+s),\quad 0\leq s\leq \tau,
\end{equation}
 we get from (\ref{vduhamel}),
\begin{align}\label{vduhamel1}
  V_n(s)=V_n(0)-i\int_0^s\fe^{-i\rho\partial_{x}^2}
  \left[\xi\fe^{i\rho\partial_{x}^2}V_n(\rho)
  +\lambda|\fe^{i\rho\partial_{x}^2}V_n(\rho)|^2
  \fe^{i\rho\partial_{x}^2}V_n(\rho)\right]d\rho,\quad 0\leq s\leq\tau.
\end{align}
It is clear from the above that $V_n(s)=V_n(0)+O(s)$, where the term $O(s)$ does not involve any spatial derivatives.
Therefore, by denoting $V_n=V_n(0)$ for short and making approximations  $V_n(\rho)\approx V_n$ and $\fe^{\pm i\rho\partial_x^2}\approx1$ in the integrand of (\ref{vduhamel1}), where the latter clearly costs two spatial derivatives, we get the following expansion/approximation:
\begin{equation}\label{Vn 1}
V_n(\rho)= V_n-i\rho\left(\xi V_n+\lambda|V_n|^2V_n\right)+O(\tau^2),\quad 0\leq \rho\leq\tau,\ n\geq0.
\end{equation}
The constant in the above truncation term $O(\tau^2)$ depends on $\partial_x^2u$ and $\partial_x^2\xi$. Under the boundedness of these terms, globally, its optimal accuracy is at  first order.

Now we go for the second order accurate approximation. By plugging the first order accurate  approximation (\ref{Vn 1}) into (\ref{vduhamel1}), and then letting $s=\tau$, we find
\begin{align}\label{vduhamel2}
  V_n(\tau)=&
  V_n+A_1(V_n)+A_2(V_n)+\mathcal{N}(V_n)+O(\tau^3),\quad n\geq0,
\end{align}
where $\mathcal{N}$ denotes the $O(\tau)$ and $O(\tau^2)$ terms coming out from the cubic nonlinearity:
\begin{align}
 \mathcal{N}(V_n)=&-i\lambda\int_0^\tau\fe^{-i\rho\partial_x^2}
 \left[\left(\fe^{-i\rho\partial_{x}^2}\overline{V_n}\right)
 \left(\fe^{i\rho\partial_{x}^2}V_n\right)^2\right]d\rho\nonumber\\
 &+\lambda^2\int_0^\tau \rho\fe^{-i\rho\partial_{x}^2}\left[
\left(\fe^{-i\rho\partial_{x}^2}\left(|V_n|^2\overline{V_n}\right)\right)
\left(\fe^{i\rho\partial_{x}^2}V_n\right)^2\right]d\rho\nonumber\\
&-2\lambda^2\int_0^\tau \rho\fe^{-i\rho\partial_{x}^2}
\left[\left(\fe^{i\rho\partial_{x}^2}\left(|V_n|^2V_n\right)\right)
\left|\fe^{i\rho\partial_{x}^2}
V_n\right|^2\right]d\rho,\quad n\geq0,\label{Nn def}
\end{align}
and $A_1(V_n),\,A_2(V_n)$ denote the terms coming out from  the potential part:
\begin{subequations}\label{A12 def}
\begin{align}
 A_1(V_n)
 =&\lambda\int_0^\tau s\fe^{-is\partial_x^2}
 \left[\left(\fe^{-is\partial_x^2}(\xi\overline{V_n})\right)
 \left(\fe^{is\partial_x^2}V_n\right)^2\right]ds-2\lambda\int_0^\tau
 s\fe^{-is\partial_x^2}\left[\left|\fe^{is\partial_x^2}V_n\right|^2
 \left(\fe^{is\partial_x^2}(\xi V_n)\right)\right]ds\nonumber\\
 &-\int_0^\tau
 s\fe^{-is\partial_x^2}\left[\xi\fe^{is\partial_x^2}\left(\xi V_n+\lambda|V_n|^2V_n\right)\right]ds,\label{A1n def}\\
 A_2(V_n)
 =&-i\int_0^\tau
 \fe^{-is\partial_x^2}\left(\xi\fe^{is\partial_x^2}V_n\right)ds,\quad n\geq0.\label{A2n def}
\end{align}
\end{subequations}
In (\ref{vduhamel2}), the truncation terms are those $O(\tau^3)$ terms  from the multiplications of the cubic nonlinearity and the potential term, where the highest order spatial derivatives evolved in the error constant are still $\partial_x^2u$ and $\partial_x^2\xi$.

In the following,
we shall compute or further approximate the three terms $A_1(V_n),\,A_2(V_n)$ and $\mathcal{N}(V_n)$ in a sequel. The goal is to obtain their explicit (approximated) formulas in the physical space and meanwhile reach the global second order accuracy by losing at most two spatial derivatives  of the functions including the solution $u$ and the potential $\xi$.  Firstly,
let us denote the Fourier transform of some function $f(x):\bT=(-L,L)\to\bC$ as
$$\widehat{f}_l=\displaystyle\frac{1}{2L}\int_\bT \fe^{-i\mu_l(x+L)}f(x)\,dx,
\quad \mbox{with}\quad \mu_l=\frac{\pi l}{L},$$
and we define the operator $\partial_x^{-m}$ as
$$
\partial_x^{-m} f(x)=\sum_{l\neq0}
(i\mu_l)^{-m}\widehat{f}_l\fe^{i\mu_l(x+L)},\quad m\in\bN_+.
$$

\textbf{Integrating potential terms.}

For $A_1(V_n)$, by directly letting $\fe^{\pm is\partial_x^2}\approx1$ in (\ref{A1n def}) and then integrating the rests exactly, it is easy to see that the following  approximation
\begin{equation}\label{A1 app}
A_1(V_n)\approx-\lambda\tau^2
\xi|V_n|^2V_n-\frac{\tau^2}{2}
\xi^2V_n,\quad n\geq0,
\end{equation}
fulfills our aforementioned goal. The truncation error here is $O(\tau^3)$ with two spatial derivatives involved in the error constant.
Now the difficulty comes to compute $A_2(V_n)$. First of all, we write $A_2(V_n)$ in the Fourier space
\begin{align}
A_2(V_n)%=-i\int_0^\tau
%\fe^{-is\partial_x^2}\left(\xi\fe^{is\partial_x^2}V_n\right)ds
=&-i\sum_{l_1+l_2=l}\fe^{i\mu_l(x+L)}\int_0^\tau \fe^{is(\mu_l^2-\mu_{l_2}^2)}\widehat{\xi}_{l_1}\widehat{(V_n)}_{l_2}ds
 \nonumber\\
 =&-i\sum_{l_1+l_2=l}\fe^{i\mu_l(x+L)}\int_0^\tau \fe^{is(\mu_{l_1}^2+2\mu_{l_1}\mu_{l_2})}
 \widehat{\xi}_{l_1}\widehat{(V_n)}_{l_2}ds.\label{A2 def}
 \end{align}
 It is seen that the two frequencies $l_1$ and $l_2$ are coupled here, which prevents us from evaluating the integral explicitly in the physical space. So some further approximations are needed, otherwise the scheme will be defined in Fourier space and one will have to deal with the convolution.
Noting that
\begin{equation}\label{sec2 eq0}\fe^{is(\mu_{l_1}^2+2\mu_{l_1}\mu_{l_2})}=\fe^{is\mu_{l_1}^2}\fe^{2is
\mu_{l_1}\mu_{l_2}}
=\fe^{is\mu_{l_1}^2}\left(1+2i\mu_{l_1}\mu_{l_2}s\right)
+O(s^2\mu_{l_1}^2\mu_{l_2}^2),
\end{equation}
 we can approximate the integrand in (\ref{A2 def}) as
\begin{eqnarray}
&&\quad\int_0^\tau\fe^{is(\mu_{l_1}^2+2\mu_{l_1}\mu_{l_2})}ds\nonumber\\
&&\approx\int_0^\tau\fe^{is\mu_{l_1}^2}\left(1+2i\mu_{l_1}\mu_{l_2}s\right)ds
=\left\{\begin{split}&\frac{\fe^{i\tau \mu_{l_1}^2}-1}{i\mu_{l_1}^2}+\frac{2\mu_{l_2}}{\mu_{l_1}^3}\left[\fe^{i\tau\mu_{l_1}^2}
(i+\tau\mu_{l_1}^2)-i\right],\quad l_1\neq0,\\
&\tau,\qquad\qquad\qquad\qquad\qquad\qquad\qquad\qquad\quad\ \, l_1=0,\end{split}\right.\qquad\qquad\label{tofilter}
\end{eqnarray}
so that the truncation error here is $O(\tau^3)$ with the loss of two spatial derivatives. It seems that this approximation fulfills our goal, but in the above formula (\ref{tofilter}) for $l_1\neq0$, we note that the second term is proportional to $\mu_{l_2}$, which corresponds to an explicit spatial derivative in the physical space. This kind of explicit spatial derivative term  will induce stability issue in the final numerical scheme, and one will end up with a CFL constraint on the time step and spatial mesh size for computing. It is known that spatial discretizations such as the Fourier pseudo-spectral method will only have an algebraic convergence rate and the rate  could be very low when the solution is less regular \cite{Shen}. Therefore, one will need to use small mesh size in space for accurate approximations, and then  the CFL constraint would lead to severe inefficiency for practical computations in the rough solution situation.  To enhance the stability, we introduce a filter \cite{BCZ}:
$$\frac{\sin(\mu_{l_2}\tau)}{\tau}=
\mu_{l_2}+O(\tau\mu_{l_2}^2),$$
 and the approximation in (\ref{tofilter}) further becomes
\begin{align}\label{sec2 eq1}
 \frac{2\mu_{l_2}}{\mu_{l_1}^3}\left[\fe^{i\tau \mu_{l_1}^2}
(i+\tau \mu_{l_1}^2)-i\right]
=\frac{2\sin(\mu_{l_2}\tau)}{\mu_{l_1}^3\tau}
\left[\fe^{i\tau\mu_{l_1}^2}
(i+\tau \mu_{l_1}^2)-i\right]+O(\tau^3\mu_{l_1} \mu_{l_2}^2),\quad l_1\neq0,
\end{align}
where we used the fact $\fe^{i\tau\mu_{l_1}^2}
(i+\tau \mu_{l_1}^2)-i=O(\tau^2\mu_{l_1}^4)$. Similar filtering strategy for stability has been proposed in a very recent work \cite{Schratznew} for the nonlinear Schr\"odinger equation, and we remark here that there are other possible choices of the filters \cite{Hairer} that could work here. Finding out the best one is beyond the scope of this work.
Then in total, (\ref{A2 def}) is approximated as
\begin{align}
  A_2(V_n)
  \approx&-i\sum_{l_1+l_2=l,l_1\neq0}\fe^{i\mu_l(x+L)}
  \left[\frac{\fe^{i\tau \mu_{l_1}^2}-1}{i\mu_{l_1}^2}+
  \frac{2\sin(\mu_{l_2}\tau)}{\mu_{l_1}^3\tau}\left(\fe^{i\tau \mu_{l_1}^2}
(i+\tau \mu_{l_1}^2)-i\right)\right]\widehat{\xi}_{l_1}\widehat{(V_n)}_{l_2}\nonumber\\
&-i\tau\sum_{l_2}\fe^{i\mu_{l_2}(x+L)}\widehat{\xi}_{0}\widehat{(V_n)}_{l_2}\nonumber\\
=&\xi_1
V_n
-2
\xi_2\sin(-i\tau\partial_x)V_n-i\tau\widehat{\xi}_0V_n,\quad n\geq0,\label{A2 app}
\end{align}
where
\begin{equation}\label{xi12}
\xi_1:=\frac{\fe^{-i\tau\partial_x^2}-1}{\partial_x^2}\xi,
\qquad \xi_2:=\frac{\fe^{-i\tau\partial_x^2}
(i-\tau\partial_x^2)-i}{\tau\partial_x^3}
\xi,\qquad x\in\bT.\end{equation}
Here the $\xi_1$ and $\xi_2$ can be regarded as two regularized versions of the rough potential $\xi$, which can be pre-computed. The truncation error here in (\ref{A2 app}) is a combination of error from (\ref{sec2 eq0}) and (\ref{sec2 eq1}), which in total is $O(\tau^3)$ with the loss of two spatial derivatives.
Now the formula (\ref{A2 app}) is defined explicitly in the physical space, and it can be programmed efficiently under the fast Fourier transform (FFT).

\textbf{Integrating the nonlinearity.}

For the part $\mathcal{N}(V_n)$ defined in (\ref{Nn def}), we completely follow the second order approximation strategy in \cite{kath1}. To be consistent, let us briefly go through some key approximations.  First of all, by denoting
$$J(V_n)=-i\lambda\int_0^\tau\fe^{-i\rho\partial_x^2}
\left[\left(\fe^{-i\rho\partial_x^2}\overline{V_n}\right)
\left(\fe^{i\rho\partial_x^2}V_n\right)^2\right]d\rho,\quad n\geq0,$$
the nonlinear terms in $\mathcal{N}(V_n)$ can be written as
\begin{align*}
\mathcal{N}(V_n)=&J(V_n)
+\lambda^2\int_0^\tau \rho\fe^{-i\rho\partial_x^2}\left[
\left(\fe^{-i\rho\partial_x^2}(|V_n|^2\overline{V_n})\right)
\left(\fe^{i\rho\partial_x^2}V_n\right)^2\right]d\rho\\
&-2\lambda^2\int_0^\tau \rho\fe^{-i\rho\partial_x^2}
\left[\left(\fe^{i\rho\partial_x^2}(|V_n|^2V_n)\right)
\left|\fe^{i\rho\partial_x^2}
V_n\right|^2\right]d\rho.
\end{align*}
The last two terms of the above are of $O(\tau^2)$, and we can adopt the approximation $\fe^{\pm i\rho\partial_x^2}\approx1$ similarly as before to get
\begin{align}\label{N app}
\mathcal{N}(V_n)\approx &J(V_n)
-\frac{1}{2}\lambda^2\tau^2|V_n|^4V_n.
\end{align}
For the part $J(V_n)$, in Fourier space we have
\begin{align*}
J(V_n)=&-i\lambda\int_0^\tau
\sum_{l_1+l_2+l_3=l}\fe^{i\mu_l(x+L)}
\fe^{i\rho\left[\mu_l^2+\mu_{l_1}^2-(\mu_{l_2}+\mu_{l_3})^2\right]}\widehat{(\overline{V_n})}_{l_1}
\widehat{(V_n)}_{l_2}\widehat{(V_n)}_{l_3}d\rho\\
=&-i\lambda\int_0^\tau
\sum_{l_1+l_2+l_3=l}\fe^{i\mu_l(x+L)}
\fe^{i\rho\left(2\mu_{l_1}\mu_l+2\mu_{l_2}\mu_{l_3}\right)}\widehat{(\overline{V_n})}_{l_1}
\widehat{(V_n)}_{l_2}\widehat{(V_n)}_{l_3}d\rho.
\end{align*}
Then by the key fact for the phase term in the above \cite{kath1}:
$$\fe^{i\rho\left(2\mu_{l_1}\mu_l+2\mu_{l_2}\mu_{l_3}\right)}
= \fe^{2i\mu_{l_1}\mu_l\rho}+\fe^{2i\mu_{l_2}\mu_{l_3}\rho}-1+O(\rho^2\mu_{l_1}\mu_{l_2}
\mu_{l_3}\mu_l),$$
one finds that
\begin{align}
J(V_n)
\approx& J_1(V_n)+J_2(V_n)
+i\lambda\int_0^\tau
\sum_{l_1+l_2+l_3=l}\fe^{i\mu_l(x+L)}\widehat{(\overline{V_n})}_{l_1}
\widehat{(V_n)}_{l_2}\widehat{(V_n)}_{l_3}d\rho\nonumber\\
=&J_1(V_n)+J_2(V_n)+i\lambda\tau|V_n|^2V_n,\label{Jx app}
\end{align}
with
\begin{align*}
 J_1(V_n):=&-i\lambda\int_0^\tau
\sum_{l_1+l_2+l_3=l}\fe^{i\mu_l(x+L)}
\fe^{2i\mu_{l_1}\mu_l\rho}\widehat{(\overline{V_n})}_{l_1}
\widehat{(V_n)}_{l_2}\widehat{(V_n)}_{l_3}d\rho,\\
J_2(V_n):=&-i\lambda\int_0^\tau
\sum_{l_1+l_2+l_3=l}\fe^{i\mu_l(x+L)}
\fe^{2i\mu_{l_2}\mu_{l_3}\rho}\widehat{(\overline{V_n})}_{l_1}
\widehat{(V_n)}_{l_2}\widehat{(V_n)}_{l_3}d\rho.
\end{align*}
It can be seen that the above approximation is $O(\tau^3)$ with the cost of two spatial derivatives.
After exact integrations in the above, the explicit formulas of $J_1(V_n)$ and $J_2(V_n)$ can be obtained in physical space:
\begin{align}
 J_1(V_n)=&\frac{\lambda}{2}\left[\fe^{-i\tau\partial_x^2}
 \partial_x^{-1}\left(\left(\fe^{-i\tau\partial_x^2}
 \partial_x^{-1}\overline{V_n}\right)
 \left(\fe^{i\tau\partial_x^2}V_n^2\right)
 \right)-\partial_x^{-1}\left(\left(\partial_x^{-1}\overline{V_n}\right)V_n^2
 \right)
 \right]+\tau \widehat{(\overline{V_n})}_0\left(V_n^2-
 \widehat{(V_n^2)}_0\right)\nonumber\\
 &+\tau\widehat{(|V_n|^2V_n)}_0,\label{J1}
\end{align}
and
\begin{align}
 J_2(V_n)=\frac{\lambda}{2}
 \left[\fe^{-i\tau\partial_x^2}
 \left(\partial_x^{-1}\fe^{i\tau\partial_x^2}V_n\right)^2
 -\left(\partial_x^{-1}V_n\right)^2\right]\overline{V_n}
 +\tau \widehat{(V_n)}_0\left(2V_n-\widehat{(V_n)}_0\right)
 \overline{V_n}.\label{J2}
\end{align}
Here we refer the readers to \cite{kath1}  for the detailed calculations of the last two formulas above. With the findings (\ref{J1}) and (\ref{J2}), we get the total approximation of $\mathcal{N}(V_n)$ in physical space from (\ref{Jx app}) and (\ref{N app}) as
$$\mathcal{N}(V_n)\approx J_1(V_n)+J_2(V_n)+i\lambda\tau|V_n|^2V_n
-\frac{1}{2}\lambda^2\tau^2|V_n|^4V_n,\quad n\geq0.$$

\textbf{Full scheme.}

Plugging the above approximation for $\mathcal{N}(V_n)$  into (\ref{vduhamel2}), we get
\begin{align*}V_n(\tau)\approx &V_n+i\lambda\tau|V_n|^2V_n
-\frac{1}{2}\lambda^2\tau^2|V_n|^4V_n
+A_1(V_n)+A_2(V_n)+J_1(V_n)+J_2(V_n)\\
=&\left(1+i\lambda\tau|V_n|^2
-\frac{1}{2}\lambda^2\tau^2|V_n|^4\right)V_n
+A_1(V_n)+A_2(V_n)+J_1(V_n)+J_2(V_n).
\end{align*}
As suggested in \cite{kath1}, the first term in the above approximation could be simplified as
\begin{align*}V_n(\tau)\approx
&\fe^{i\lambda\tau|V_n|^2}V_n+A_1(V_n)+A_2(V_n)
+J_1(V_n)+J_2(V_n)+O(\tau^3),
\end{align*}
which gives the desired second order approximation of $V_n(\tau)$ for $n\geq0$ with the cost of two spatial derivatives.

Noticing the fact $u(t_n)=V_n(0)=V_n$, the final second order low-regularity Fourier integrator (LRI) for the D-NLS (\ref{model}) reads: denoting $u^n=u^n(x)$ as the numerical approximation of $u(t_n)$ for $n\geq0$ and taking $u^0=u_0$ in (\ref{model}), then
\begin{align}\label{vduhamel3}
  u^{n+1}=\fe^{i\tau\partial_x^2}
 \Bigg[& \fe^{i\lambda\tau|u^n|^2}
  u^n+J_{1}(u^n)
  +J_{2}(u^n)-\lambda\tau^2
\xi|u^n|^2u^n-\frac{\tau^2}{2}
\xi^2u^n+\xi_1u^n\\
&-2\xi_2\sin(-i\tau\partial_x)u^n-i\tau\widehat{\xi}_0u^n
\Bigg],\quad x\in\bT,\ n\geq0,\nonumber
\end{align}
where $\xi_1,\,\xi_2$ are defined in (\ref{xi12}), and $J_1(\cdot),\,J_2(\cdot)$ are defined in (\ref{J1}) and (\ref{J2}).

The LRI scheme (\ref{vduhamel3}) is fully explicit in time, and it is explicitly defined in physical space. The two regularized potentials $\xi_1$ and $\xi_2$ in (\ref{xi12}) can be computed once for all before the temporal iteration. The spatial discretization in (\ref{vduhamel3}) can be done by the Fourier pseudo-spectral method \cite{Shen} under our periodic setup. In practical implementation, we take $h=|\bT|/N>0$ as the spatial mesh size with some integer $N>0$ as the total number of space grid points, and (\ref{vduhamel3}) can be programmed efficiently via FFT with the computational cost per time step $O(N\log N)$.

\subsection{Convergence analysis}\label{sec:thm} Here we give a convergence analysis for the proposed LRI (\ref{vduhamel3}) under assumption that in (\ref{model}) a sampled potential $\xi\in H^{\gamma+2}(\bT)$ for some $\gamma\geq0$. For simplicity of notations, we shall denote $A\lesssim B$ for $A\leq CB$ with some generic constant $C>0$ independent of $n$ or $\tau$.

\begin{theorem}\label{thm:nls}
Assume that $\xi\in H^{\gamma+2}(\bT)$ and $u_0\in H^{\gamma+2}(\bT)$ for some $\gamma\geq0$, then for the numerical solution $u^n$ defined in the LRI (\ref{vduhamel3}) up to some fixed $T>0$, there exists constant $\tau_0>0$ such that when $0<\tau\leq\tau_0$,
$$\|u(\cdot,t_n)-u^n\|_{H^\gamma}\lesssim \tau^2,\quad 0\leq n\leq T/\tau.$$
\end{theorem}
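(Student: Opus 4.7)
The plan is to follow the standard local-error plus stability template for low-regularity integrators, adapted to the multiplicative random potential. Denote by $\Psi_\tau$ the one-step map sending $u^n$ to $u^{n+1}$ in (\ref{vduhamel3}), and set $e^n := u^n - u(\cdot, t_n)$. I will bound $\|e^n\|_{H^\gamma}$ by induction, using an a priori $H^{\gamma+2}$ control on both the exact and numerical solutions.

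\emph{Regularity and a priori bound.} Since $\gamma + 2 \geq 2 > 1/2$, the space $H^{\gamma+2}(\bT)$ is a Banach algebra, so $\xi u$ and $|u|^2 u$ are locally Lipschitz maps on $H^{\gamma+2}$; a standard Duhamel contraction then yields $u \in C([0,T]; H^{\gamma+2})$ with $\sup_{t \leq T} \|u(\cdot,t)\|_{H^{\gamma+2}} =: R < \infty$. For the numerical solution I verify that each operator in (\ref{vduhamel3}) is bounded on $H^{\gamma+2}$ uniformly in $\tau$: $\fe^{i\tau\partial_x^2}$ and $\sin(-i\tau\partial_x)$ are Fourier multipliers of modulus at most $1$; the symbols $(\fe^{i\tau\mu^2}-1)/(i\mu^2)$ and $[\fe^{i\tau\mu^2}(i+\tau\mu^2)-i]/(\tau\mu^3)$ defining $\xi_1$ and $\xi_2$ in (\ref{xi12}) are uniformly bounded in $\tau$, with the second one even gaining one derivative through its $\mu^{-1}$ decay; the unit-modulus phase $\fe^{i\lambda\tau|u^n|^2}$ enters by Moser composition estimates; and $J_1, J_2$ are bounded trilinear maps on $H^{\gamma+2}$ whose $\partial_x^{-1}$ factors act only on mean-free pieces, since the zero modes are corrected explicitly in (\ref{J1})--(\ref{J2}). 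A discrete Gronwall argument and a short continuity argument then give $\sup_{n \leq T/\tau}\|u^n\|_{H^{\gamma+2}} \leq 2R$ for $\tau$ small enough.

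\emph{Local error and stability.} Four families of approximations were introduced in Section \ref{sec:lri}, and each has already been tracked to produce an $O(\tau^3)$ remainder with loss of at most two spatial derivatives on $\xi$ or $u$: the first-order expansion (\ref{Vn 1}) of $V_n(\rho)$; the substitution $\fe^{\pm is\partial_x^2} \approx \mathrm{id}$ inside $A_1$ and inside the $O(\tau^2)$ pieces of $\mathcal{N}$; the phase expansions (\ref{sec2 eq0}) for $A_2$ together with the key identity used for the cubic nonlinearity; and the filter (\ref{sec2 eq1}). Summing these contributions and using the $H^{\gamma+2}$ algebra property yields
\[\|\Psi_\tau(u(t_n)) - u(t_{n+1})\|_{H^\gamma} \leq C\tau^3 P\bigl(\|u(t_n)\|_{H^{\gamma+2}}, \|\xi\|_{H^{\gamma+2}}\bigr)\]
for a polynomial $P$. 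Under the $H^{\gamma+2}$ caps from the previous paragraph, a direct inspection of (\ref{vduhamel3}) combined with the Kato--Ponce bound $\|fg\|_{H^\gamma} \lesssim \|f\|_{H^{\gamma+2}}\|g\|_{H^\gamma} + \|f\|_{H^\gamma}\|g\|_{H^{\gamma+2}}$ and the Lipschitz estimate $|\fe^{ia}-\fe^{ib}| \leq |a-b|$ for the Lie phase gives the stability bound $\|\Psi_\tau(u) - \Psi_\tau(v)\|_{H^\gamma} \leq (1 + C\tau)\|u - v\|_{H^\gamma}$. Writing $e^{n+1}$ as the sum of a stability contribution and the local truncation, and applying discrete Gronwall, yields $\|e^n\|_{H^\gamma} \leq C_T\tau^2$ for all $n \leq T/\tau$.

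\emph{Main obstacle.} The key technical point is the a priori $H^{\gamma+2}$ boundedness of the $\xi$-dependent Fourier-multiplier operators $\xi_1$ and $\xi_2$ uniformly in $\tau$. Without the filter introduced in (\ref{sec2 eq1}), the $A_2$ term would carry an unbounded factor $\mu_{l_2}$ in its symbol and this a priori bound would fail; verifying that the filtered symbol of $\xi_2$ genuinely smooths by one derivative, so that $\xi\in H^{\gamma+2}$ suffices rather than $H^{\gamma+3}$, is precisely what distinguishes the analysis here from the smooth-potential LRI setting of \cite{kath1}. Once this uniform $H^{\gamma+2}$ control is in place, the rest of the argument is a routine adaptation.
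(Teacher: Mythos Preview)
Your overall template --- local error plus stability plus discrete Gronwall --- matches the paper's proof, and your identification of the four sources of $O(\tau^3)$ local error with two-derivative loss is correct. The genuine gap is in your a priori control of $\|u^n\|_{H^{\gamma+2}}$.

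You claim each piece of the one-step map is bounded on $H^{\gamma+2}$ uniformly in $\tau$ and then invoke a discrete Gronwall/continuity argument to conclude $\sup_n\|u^n\|_{H^{\gamma+2}}\le 2R$. Uniform boundedness is not enough: for Gronwall over $T/\tau$ steps you need the bound to be $1+O(\tau)$, i.e.\ every correction term in (\ref{vduhamel3}) must be $O(\tau)$ in $H^{\gamma+2}$. All terms satisfy this except $\xi_2\sin(-i\tau\partial_x)u^n$. Your observation that the symbol of $\xi\mapsto\xi_2$ decays like $|\mu|^{-1}$ is correct, but that bound carries no factor of $\tau$; the alternative estimate $\|\xi_2\|_{H^{\gamma+2}}\lesssim\tau\|\xi\|_{H^{\gamma+3}}$ would cost one extra derivative on $\xi$ that you do not have. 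Interpolating the two gives only $\|\xi_2\|_{H^{\gamma+2}}\lesssim\sqrt{\tau}\,\|\xi\|_{H^{\gamma+2}}$, and the $\sin$ factor cannot supply the missing $\sqrt\tau$ without costing a derivative on $u^n$. So the recursion is at best $\|u^{n+1}\|_{H^{\gamma+2}}\le(1+C\sqrt\tau)\|u^n\|_{H^{\gamma+2}}+\ldots$, and after $T/\tau$ steps this yields $\exp(CT/\sqrt\tau)$, which blows up.

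The paper avoids this entirely by never tracking $u^n$ in $H^{\gamma+2}$. For $\gamma>\tfrac12$ it uses the algebra property of $H^\gamma$ itself, so stability (Lemma~\ref{lm stable}) only needs $u^n$ bounded in $H^\gamma$, and that bound comes for free from the induction intertwined with the convergence proof. The key potential estimates there are $\|\xi_1\|_{H^\gamma}\lesssim\tau\|\xi\|_{H^\gamma}$ and $\|\xi_2\|_{H^\gamma}\lesssim\tau\|\xi\|_{H^{\gamma+1}}$ --- note it is the $\tau$ factor that matters for stability, not the one-derivative smoothing you emphasize. For $0\le\gamma\le\tfrac12$ the paper does not give details either, pointing instead to Lubich's two-step bootstrap (lower-order convergence in a higher norm first, then full rate in $H^\gamma$); your Kato--Ponce route, which genuinely needs $u^n\in H^{\gamma+2}$ to place the smooth factor on the right side of the product estimate, was an attempt to bypass this, but it runs into the obstruction above.
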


%Since $V_n(\tau)=\fe^{-i\tau\partial_x^2}u(t_{n+1})$ and $V_n=V_n(0)=u(t_n)$, it is equivalent to prove the theorem on the twisted variable as $\|V_n-V^n\|_{L^2}\lesssim \tau^2$ where we denote $V^n=$
To prove the theorem, we first establish some lemmas for stability and local error estimates.
First of all, we have some formal \emph{a prior} estimate results for the solution of (\ref{model}) when $\xi\in H^{\gamma+2}$.
\begin{lemma}\label{lm1} (A prior estimate in $H^{\gamma+2}$)
If  $\xi\in H^{\gamma+2}(\bT)$ and $u_0\in H^{\gamma+2}(\bT)$ for some $\gamma\geq0$ in (\ref{model}), then for some $T>0$, we have $u\in L^\infty((0,T),H^{\gamma+2}(\bT))$.
\end{lemma}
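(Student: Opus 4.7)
The plan is to prove the claim by a standard Duhamel/Banach fixed-point argument in $H^{\gamma+2}(\bT)$, exploiting that $H^s(\bT)$ is a Banach algebra whenever $s>1/2$, which is satisfied since $\gamma+2\geq 2$.

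First I would rewrite (\ref{model}) in mild form as
\begin{equation*}
u(t) = \fe^{it\partial_x^2}u_0 - i\int_0^t \fe^{i(t-s)\partial_x^2}F(u(s))\, ds, \qquad F(u):=\xi u+\lambda|u|^2u,
\end{equation*}
and observe that the free propagator $\fe^{it\partial_x^2}$ is an isometry on every $H^s(\bT)$. The algebra property of $H^{\gamma+2}(\bT)$ then immediately gives
\begin{equation*}
\|F(u)\|_{H^{\gamma+2}} \lesssim \|\xi\|_{H^{\gamma+2}}\|u\|_{H^{\gamma+2}} + \|u\|_{H^{\gamma+2}}^3,
\end{equation*}
which is locally Lipschitz in $u$ in the same norm (with a Lipschitz constant controlled by $\|\xi\|_{H^{\gamma+2}}$ and by the size of $u$).

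Next I would set up a contraction mapping argument on the closed ball
\begin{equation*}
\mathcal{B}_R := \{v\in C([0,T];H^{\gamma+2}(\bT)) : \|v\|_{L^\infty_t H^{\gamma+2}}\leq R\}
\end{equation*}
for $R=2\|u_0\|_{H^{\gamma+2}}$, defining $\Phi(v)(t) := \fe^{it\partial_x^2}u_0 - i\int_0^t \fe^{i(t-s)\partial_x^2}F(v(s))\, ds$. Using the isometry property of $\fe^{it\partial_x^2}$ and the nonlinear estimate above, one obtains $\|\Phi(v)\|_{L^\infty_t H^{\gamma+2}} \leq \|u_0\|_{H^{\gamma+2}} + T\, C(\|\xi\|_{H^{\gamma+2}},R)$, and a similar estimate for $\|\Phi(v_1)-\Phi(v_2)\|_{L^\infty_t H^{\gamma+2}}$, so that $\Phi$ is a contraction on $\mathcal{B}_R$ provided $T$ is chosen small enough depending only on $\|u_0\|_{H^{\gamma+2}}$, $\|\xi\|_{H^{\gamma+2}}$ and $|\lambda|$. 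Banach's fixed-point theorem then yields a unique mild solution $u\in C([0,T];H^{\gamma+2}(\bT))$, which a posteriori solves (\ref{model}) in the strong sense.

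Finally, to extend the interval of existence as far as possible, I would apply a Gr\"onwall-type argument: taking the $H^{\gamma+2}$ norm of the Duhamel formula and using the nonlinear estimate yields
\begin{equation*}
\|u(t)\|_{H^{\gamma+2}} \leq \|u_0\|_{H^{\gamma+2}} + C\int_0^t \bigl(\|\xi\|_{H^{\gamma+2}} + \|u(s)\|_{H^{\gamma+2}}^2\bigr)\|u(s)\|_{H^{\gamma+2}}\, ds,
\end{equation*}
whence a standard blow-up alternative shows the solution persists on some interval $[0,T]$ with $T$ depending only on the data norms. The main (and essentially only) technical point to watch is the cubic term: since the bound is only quadratic in $\|u\|_{H^{\gamma+2}}$ on the right-hand side after one factor is absorbed, $T$ cannot be made arbitrary without extra control (for example from mass conservation, which one could also invoke to improve to a global bound), but this is unnecessary here as the lemma only claims local existence. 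No obstacle requires new ideas, and the argument is independent of the sign of $\lambda$.
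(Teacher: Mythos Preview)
Your proposal is correct and follows essentially the same route as the paper: both arguments rewrite (\ref{model}) via the Duhamel formula, use that $\fe^{it\partial_x^2}$ is an $H^s$-isometry together with the algebra property of $H^{\gamma+2}(\bT)$ to bound $\xi u+\lambda|u|^2u$, and close locally in time. The only cosmetic difference is that the paper packages the final step as a bootstrap/continuity argument rather than an explicit Banach fixed-point contraction, but the underlying estimates are identical.
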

\begin{proof}
  By Duhammel's formula of (\ref{model}), we have
  \begin{align}\label{lm1 eq0}
    u(t)=\fe^{it\partial_x^2}u_0-i\int_0^t\fe^{i(t-s)\partial_x^2}
    \left(\xi+\lambda|u(s)|^2\right)u(s)ds, \quad t\geq0.
  \end{align}
  By the Bootstrap, we first assume that
  $u\in L^\infty((0,T),H^{\gamma+2}(\bT))$ for some $T>0$. By taking $H^{\gamma+2}$-norm on both sides of (\ref{lm1 eq0}), and using the triangle inequality as well as the Sobolev's inequality, we have
 \begin{align}\label{lm1 eq11}
 \|u(t)\|_{H^{\gamma+2}}\leq& \|u_0\|_{H^{\gamma+2}}+
   \int_0^t\left(\|\xi\|_{H^{\gamma+2}}\|
   u(s)\|_{H^{\gamma+2}}+\lambda\|
   u(s)\|_{H^{\gamma+2}}^3\right)ds,\nonumber\\
  \leq& \|u_0\|_{H^{\gamma+2}}+
  t\left(\|\xi\|_{H^{\gamma+2}}\|
   u\|_{L^\infty((0,T),H^{\gamma+2})}+\lambda\|
   u\|_{L^\infty((0,T),H^{\gamma+2})}^3\right),\quad 0\leq t\leq T.\end{align}
Then   the conclusion of this lemma follows by the standard Bootstrap argument \cite{Taobook}.
\end{proof}

It is clear that the regularity of the solution of (\ref{model}) is determined by both the initial data and the potential. In many of the related physical applications, the initial input $u_0$ of (\ref{model}) is indeed rather smooth. So here we give an improved regularity result for the solution $u$ of (\ref{model}) when $u_0$ is from at least $H^{\gamma+4}(\bT)$. This, however, is neither necessary for establishing the convergence result in Theorem \ref{thm:nls} nor helps to make practical improvements on the performance of the proposed LRI scheme, which  will be illustrated by our numerical results. The reason is that the truncation error from the rough potential $\xi$ in the scheme will become the dominant term under the smoother data case, and this can be seen in the coming analysis of the local error.

\begin{lemma}\label{lm1p5} (A prior estimate in $H^{\gamma+4}$)
If  $\xi\in H^{\gamma+2}(\bT)$ and $u_0\in H^{\gamma+4}(\bT)$ for some $\gamma\geq0$ in (\ref{model}), then for some $T>0$, we have $u\in L^\infty((0,T),H^{\gamma+4}(\bT))$.
\end{lemma}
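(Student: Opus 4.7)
The plan is to trade spatial regularity for temporal regularity via the equation itself. A direct Duhamel estimate at the $H^{\gamma+4}$ level is blocked because the product $\xi u$ lives only in $H^{\gamma+2}$ when $\xi\in H^{\gamma+2}(\bT)$. However, rearranging (\ref{model}) gives the pointwise identity $\partial_x^2 u=-i\partial_tu+\xi u+\lambda|u|^2u$, so gaining two spatial derivatives on $u$ is equivalent to gaining one time derivative, modulo products that can be controlled by the algebra property of $H^{\gamma+2}(\bT)$ (valid since $\gamma+2\geq 2>1/2$) together with Lemma \ref{lm1}.

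Accordingly I would set $v:=\partial_tu$ and differentiate (\ref{model}) formally in time to obtain the linear inhomogeneous Schr\"odinger equation
$$i\partial_tv=-\partial_x^2v+\xi v+2\lambda|u|^2v+\lambda u^2\overline{v},\quad t>0,\ x\in\bT,$$
with initial datum $v(0)=i\left[\partial_x^2u_0-\xi u_0-\lambda|u_0|^2u_0\right]$. Since $u_0\in H^{\gamma+4}(\bT)$ yields $\partial_x^2u_0\in H^{\gamma+2}(\bT)$, and both $\xi u_0$ and $|u_0|^2u_0$ lie in $H^{\gamma+2}(\bT)$ by the algebra property, we get $v(0)\in H^{\gamma+2}(\bT)$. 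Writing $v$ in its mild form and taking $H^{\gamma+2}$-norms --- using the isometry of $\fe^{it\partial_x^2}$ on $H^{\gamma+2}$, the algebra property for the products $\xi v$, $|u|^2v$, $u^2\overline{v}$, and the a priori bound $\|u\|_{L^\infty((0,T),H^{\gamma+2})}\lesssim 1$ from Lemma \ref{lm1} --- yields
$$\|v(t)\|_{H^{\gamma+2}}\leq \|v(0)\|_{H^{\gamma+2}}+C\int_0^t\|v(s)\|_{H^{\gamma+2}}\,ds,\quad 0\leq t\leq T,$$
with $C=C(\|\xi\|_{H^{\gamma+2}},\|u\|_{L^\infty((0,T),H^{\gamma+2})})$. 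Gr\"onwall then gives $v\in L^\infty((0,T),H^{\gamma+2})$, and substituting back into the pointwise identity from the first paragraph yields $\partial_x^2u\in L^\infty((0,T),H^{\gamma+2})$, i.e. $u\in L^\infty((0,T),H^{\gamma+4})$.

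The main obstacle is making the formal time differentiation rigorous: a priori $\partial_tu$ exists only as a distribution, and we must justify that $v=\partial_tu$ is indeed an $H^{\gamma+2}$-valued continuous function solving the derived equation with the claimed initial trace. The cleanest fix is a regularization argument: mollify $\xi$ to $\xi_\eps\in C^\infty(\bT)$, work with the classical smooth-in-time solutions $u_\eps$ to the regularized D-NLS, run the above analysis uniformly in $\eps$, and pass to the limit using the uniform bounds and weak-$*$ compactness in $L^\infty((0,T),H^{\gamma+4})$. Alternatively, one can differentiate the Duhamel formula for $u$ itself in $t$ on the Fourier side and estimate term-by-term, bypassing $v$ as an independent object; this is more tedious but avoids the approximation step entirely.
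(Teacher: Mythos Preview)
Your argument is correct and takes a genuinely different route from the paper. The paper integrates by parts in time inside the Duhamel formula for $u$, so that a factor $\partial_x^{-2}$ absorbs two derivatives and the remaining integrand involves $\partial_tu$; it then estimates $\|\partial_tu\|_{H^{\gamma+2}}$ via the equation as $\lesssim\|u\|_{H^{\gamma+4}}+\ldots$ and closes by a bootstrap on $\|u\|_{L^\infty((0,T),H^{\gamma+4})}$. Your approach instead isolates $v=\partial_tu$ as the solution of a linear Schr\"odinger equation whose coefficients depend only on $u$ at the $H^{\gamma+2}$ level already furnished by Lemma~\ref{lm1}; Gr\"onwall then gives $v\in L^\infty((0,T),H^{\gamma+2})$ \emph{without} any bootstrap on the $H^{\gamma+4}$ norm, and the pointwise identity $\partial_x^2u=-iv+\xi u+\lambda|u|^2u$ finishes. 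The trade-off is that your route requires justifying the time differentiation (which you address by mollification), whereas the paper's integration-by-parts stays inside the integral formulation and needs no such step but pays with the bootstrap. Both are short; yours is arguably cleaner since the $H^{\gamma+4}$ bound never appears on the right-hand side of any inequality.
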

\begin{proof}
Similarly by Bootstrap, we assume
  $u\in L^\infty((0,T),H^{\gamma+4}(\bT))$ for some $T>0$. %By taking $H^{\gamma+2}$-norm on both sides of (\ref{lm1 eq0}) and by Sobolev's inequality,
%   \begin{equation}\label{lm1 eq2}\|u(t)\|_{H^\gamma}\leq \|u_0\|_{H^\gamma}+t\left[\|\xi\|_{H^{\gamma+2}}
%   \|u\|_{L^\infty((0,T),H^{\gamma+2})}+\lambda
%   \|u\|_{L^\infty((0,T),H^{\gamma+2})}^3\right],\quad 0\leq t\leq T.\end{equation}
By the equation (\ref{model}) and using Sobolev's inequality, we find
 for $0\leq t\leq T$,
\begin{equation}\label{lm1 eq3}\|\partial_tu(t)\|_{H^{\gamma+2}}\leq \|u\|_{L^\infty((0,T),H^{\gamma+4})}+\|\xi\|_{H^{\gamma+2}}
\|u\|_{L^\infty((0,T),H^{\gamma+4})}
  +\lambda\|u\|_{L^\infty((0,T),H^{\gamma+4})}^3.\end{equation}
 On the other hand, using integration-by-parts for (\ref{lm1 eq0}), with our notation $\partial_x^{-2}$ defined before, we have
    \begin{align}
    u(t)=&\fe^{it\partial_x^2}u_0+\partial_x^{-2}\left[
    \left(\xi+\lambda|u(t)|^2\right)u(t)-
    \fe^{it\partial_x^2}\left(\xi+\lambda|u_0|^2\right)u_0\right]
    -i\int_0^t\left[\widehat{(\xi u)}_0(s)
    +\lambda\widehat{(|u|^2u)}_0(s)\right]ds\nonumber\\
    &-\partial_x^{-2}\int_0^t\fe^{i(t-s)\partial_x^2}
    \left[\xi \partial_tu(s)+2\lambda\partial_tu(s) |u(s)|^2
    +\lambda \partial_t\overline{u}(s)u^2(s)\right]ds, \quad t\geq0.\label{lm1 eq1}
  \end{align}
By taking the $H^{\gamma+4}$-norm on both sides of (\ref{lm1 eq1}), we have
 for $0\leq t\leq T$,
   \begin{align*}
   \| u(t)\|_{H^{\gamma+4}}\leq &\|u_0\|_{H^{\gamma+4}}+\|
    \left(\xi+\lambda|u(t)|^2\right)u(t)\|_{H^{\gamma+2}}+
    \|\left(\xi+\lambda|u_0|^2\right)u_0\|_{H^{\gamma+2}}\nonumber\\
    &+\sqrt{|\bT|}\int_0^t\left[\left|\widehat{(\xi u)}_0
    \right|(s)
    +\lambda\left|\widehat{(|u|^2u)}_0\right|(s)\right]ds\\
    &+\int_0^t\left[
    \|\xi \partial_tu(s)\|_{H^{\gamma+2}}+2\lambda\|\partial_tu(s) |u(s)|^2\|_{H^{\gamma+2}}
    +\lambda \|\partial_t\overline{u}(s)u^2(s)\|_{H^{\gamma+2}}\right]ds.
    \end{align*}
 By further using Sobolev's inequality, we get
 \begin{align*}
 \|u(t)\|_{H^{\gamma+4}}
    \leq &\|u_0\|_{H^{\gamma+4}}+
    \|\xi\|_{H^{\gamma+2}}\|u_0\|_{H^{\gamma+4}}
    +\lambda\|u_0\|_{H^{\gamma+4}}^3+
    \|\xi\|_{H^{\gamma+2}}\|u(t)\|_{H^{\gamma+2}}
    +\lambda\|u(t)\|_{H^{\gamma+2}}^3\nonumber\\
    &+C\int_0^t\left[\|\xi\|_{H^{\gamma+2}}\|u(s)\|_{H^{\gamma+2}}
    +\lambda\|u(s)\|_{H^{\gamma+2}}^3\right]ds\\
    &+\int_0^t\left[
    \|\xi\|_{H^{\gamma+2}} +2\lambda \|u(s)\|_{H^{\gamma+2}}^2
    +\lambda\|u(s)\|_{H^{\gamma+2}}^2\right]
    \|\partial_tu(s)\|_{H^{\gamma+2}}ds,\quad 0\leq t\leq T,
    \end{align*}
    where $C>0$ depends only on the size of the torus.
  Then by plugging (\ref{lm1 eq11}) and (\ref{lm1 eq3}) into the above inequality, the conclusion of this lemma is obtained by Bootstrap argument.
\end{proof}

To avoid the technical difficulty for proof, we shall consider in the following $\gamma>\frac{1}{2}$ so that we have the algebraic property of the Sobolev space $H^\gamma$. The extension to $\gamma\geq0$ can be obtained by following the strategy in \cite{Lubich} to establish a weaker convergence rate of the scheme in order to obtain the boundedness of the numerical solution. Now we define numerical flow map of the LRI (\ref{vduhamel3}) as
\begin{align*}
  &\Psi_\tau(w)\\
  :=&\fe^{i\tau\partial_x^2}
 \Bigg[& \fe^{i\lambda\tau|w|^2}
  w+J_{1}(w)
  +J_{2}(w)-\lambda\tau^2
\xi|w|^2w-\frac{\tau^2}{2}
\xi^2w+\xi_1w
-2\xi_2\sin(-i\tau\partial_x)w-i\tau\widehat{\xi}_0w\Bigg],
\end{align*}
for some function $w$ on $\bT$, and then we have the following stability result.
\begin{lemma}\label{lm stable}(Stability)
  For $w_1,w_2\in H^{\gamma}(\bT)$ and $\xi\in
  H^{\gamma+2}(\bT)$ with some $\gamma>\frac12$, we have
  $$\|\Psi_\tau(w_1)-\Psi_\tau(w_2)\|_{H^\gamma}
  -\|w_1-w_2\|_{H^\gamma}\lesssim \tau
  \|w_1-w_2\|_{H^\gamma}.$$
\end{lemma}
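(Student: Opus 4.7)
My plan is to exploit that $\fe^{i\tau\partial_x^2}$ is an isometry on $H^\gamma$, which reduces the problem to bounding the $H^\gamma$-norm of the difference of the bracketed expressions defining $\Psi_\tau$. I would split this difference by the triangle inequality into the seven natural pieces corresponding to the terms in $\Psi_\tau$ and estimate each separately, relying throughout on the algebra property of $H^\gamma$ for $\gamma>\tfrac12$ and on Fourier-side multiplier bounds for the $\xi$-dependent operators.

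For the leading propagator $\fe^{i\lambda\tau|w|^2}w$, the key decomposition is
\[
\fe^{i\lambda\tau|w_1|^2}w_1-\fe^{i\lambda\tau|w_2|^2}w_2
=(w_1-w_2)+(\fe^{i\lambda\tau|w_1|^2}-1)(w_1-w_2)+(\fe^{i\lambda\tau|w_1|^2}-\fe^{i\lambda\tau|w_2|^2})w_2,
\]
where the first term supplies the $\|w_1-w_2\|_{H^\gamma}$ on the right-hand side of the claim, while the last two are controlled with Lipschitz constant $O(\tau)$ via the representations $\fe^{i\lambda\tau|w_1|^2}-1=i\lambda\tau|w_1|^2\int_0^1\fe^{is\lambda\tau|w_1|^2}\,ds$ and $\fe^{ia}-\fe^{ib}=i(a-b)\int_0^1\fe^{i(sa+(1-s)b)}\,ds$, together with Moser-type bounds for $\fe^{is\lambda\tau|\cdot|^2}$ in $H^\gamma$ and the $H^\gamma$-algebra property. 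The cubic correction pieces $J_1(w),J_2(w)$ in (\ref{J1})--(\ref{J2}) are treated similarly: each summand carries either an explicit $\tau$ prefactor or a factor $(\fe^{\pm i\tau\partial_x^2}-I)$ whose multiplier is $O(\tau\mu^2)$; since this is paired with $\partial_x^{-1}$ (or even $\partial_x^{-2}$ after the smoothing of $\fe^{i\tau\partial_x^2}\partial_x^{-1}$), the loss of derivatives is absorbed and a Lipschitz constant $\lesssim\tau(\|w_1\|_{H^\gamma}+\|w_2\|_{H^\gamma})^2$ emerges.

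For the potential pieces, I would establish the Fourier-side multiplier bounds
\[
\|\xi_1\|_{H^\gamma}\lesssim\tau\|\xi\|_{H^\gamma},\qquad \|\xi_2\|_{H^\gamma}\lesssim\tau\|\xi\|_{H^{\gamma+1}}.
\]
The first follows at once from $|(\fe^{i\tau\mu^2}-1)/\mu^2|\le\tau$, while the second uses the expansion $\fe^{i\tau\mu^2}(i+\tau\mu^2)-i=\tfrac{i}{2}(\tau\mu^2)^2+O((\tau\mu^2)^3)$ in a two-regime split ($\tau|\mu|\le1$ versus $\tau|\mu|>1$) of the Fourier sum to extract the $\tau$ factor at the cost of exactly one derivative of $\xi$. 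Combined with $|\sin(\tau\mu)|\le 1$ (so $\sin(-i\tau\partial_x)$ is bounded on $H^\gamma$) and the algebra property, this closes the $\xi_1 w$ and $\xi_2\sin(-i\tau\partial_x)w$ terms. The residual $O(\tau^2)$ pieces $\tau^2\xi|w|^2w$ and $\tau^2\xi^2 w$ and the $\tau\widehat{\xi}_0 w$ term are strictly lower order and bounded directly by the algebra property.

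The main obstacle is the $\xi_2\sin(-i\tau\partial_x)w$ piece: the symbol of $\xi_2$ is neither uniformly $O(\tau)$ nor a clean smoothing operator, so extracting the $\tau$ factor requires the delicate two-regime decomposition above, which trades one derivative of $\xi$ for the gain in $\tau$. The hypothesis $\xi\in H^{\gamma+2}$ comfortably absorbs this loss (in fact $H^{\gamma+1}$ would suffice for stability alone; the extra derivative will be needed for the later local-error estimate). Summing the seven piecewise bounds yields
$\|\Psi_\tau(w_1)-\Psi_\tau(w_2)\|_{H^\gamma}\le\|w_1-w_2\|_{H^\gamma}+C\tau\|w_1-w_2\|_{H^\gamma}$
with $C$ depending polynomially on $\|w_1\|_{H^\gamma}$, $\|w_2\|_{H^\gamma}$, and $\|\xi\|_{H^{\gamma+2}}$, which is the claim.
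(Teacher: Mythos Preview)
Your approach is essentially the paper's: exploit the isometry of $\fe^{i\tau\partial_x^2}$, split by the triangle inequality, and bound each piece with the $H^\gamma$-algebra property and Fourier multiplier estimates. The paper groups the terms into $\zeta_1$ (the three nonlinear pieces $\fe^{i\lambda\tau|w|^2}w$, $J_1$, $J_2$) and $\zeta_2$ (the five potential pieces), cites \cite{kath1} for the bound $\zeta_1-\|w_1-w_2\|_{H^\gamma}\lesssim\tau\|w_1-w_2\|_{H^\gamma}$, and proves $\zeta_2\lesssim\tau\|w_1-w_2\|_{H^\gamma}$ exactly along your lines.

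Two technical points need tightening. First, for $\xi_2$ the paper avoids any two-regime split: the Taylor integral remainder $i-i\fe^{i\tau\mu^2}-\tau\mu^2\fe^{i\tau\mu^2}=-i\tau^2\mu^4\int_0^1\theta\,\fe^{i\tau\mu^2\theta}\,d\theta$ shows the Fourier symbol of $\xi_2$ is $-\tau\mu_l\int_0^1\theta\,\fe^{i\tau\mu_l^2\theta}\,d\theta$, hence of modulus $\le\tfrac12\tau|\mu_l|$, giving $\|\xi_2\|_{H^\gamma}\lesssim\tau\|\xi\|_{H^{\gamma+1}}$ in one stroke. Your split would work too, but with threshold $\tau\mu^2\lesssim1$ rather than $\tau|\mu|\lesssim1$, since the expansion you quote is in the variable $\tau\mu^2$.

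Second, your mechanism for $J_1,J_2$ does not work if read as a physical-space telescoping. Decomposing the main bracket in (\ref{J1}) into three summands each carrying one factor $(\fe^{\pm i\tau\partial_x^2}-I)$ yields only an $O(\sqrt{\tau})$ Lipschitz constant in $H^\gamma$: e.g.\ the outer $(\fe^{-i\tau\partial_x^2}-I)\partial_x^{-1}$ has symbol bounded by $\min(\tau|\mu|,2/|\mu|)\le\sqrt{2\tau}$, and the piece with $(\fe^{i\tau\partial_x^2}-I)w^2$ loses two derivatives that a single $\partial_x^{-1}$ cannot recover. The $O(\tau)$ bound comes instead from the Fourier-integral representation
\[
J_1(w)=-i\lambda\int_0^\tau\sum_{l_1+l_2+l_3=l}\fe^{i\mu_l(x+L)}\fe^{2i\mu_{l_1}\mu_l\rho}\,\widehat{(\overline{w})}_{l_1}\widehat{w}_{l_2}\widehat{w}_{l_3}\,d\rho,
\]
where for each fixed $\rho$ the integrand is a bounded trilinear form on $H^\gamma$ (algebra property plus isometry of $\fe^{\pm i\rho\partial_x^2}$), so integration over $[0,\tau]$ produces the factor $\tau$ with no derivative loss. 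This is precisely the \cite{kath1} argument the paper invokes; the same comment applies to $J_2$.
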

\begin{proof}
  By directly taking the difference between $\Psi_\tau(w_1)$ and $\Psi_\tau(w_2)$ defined as above, and then taking the $H^\gamma$-norm on both sides, we find
  \begin{align}\label{lm2 eq-1}
 \|\Psi_\tau(w_1)-\Psi_\tau(w_2)\|_{H^\gamma}\leq&
 \zeta_1+\zeta_2,
  \end{align}
  where  we denote
 \begin{align*}
 \zeta_1=&\|\fe^{i\lambda\tau|w_1|^2}w_1
 -\fe^{i\lambda\tau|w_2|^2}w_2\|_{H^\gamma}
 +\|J_1(w_1)-J_1(w_2)\|_{H^\gamma}+\|J_2(w_1)-J_2(w_2)\|_{H^\gamma},\\
 \zeta_2=&\lambda\tau^2\|\xi(|w_1|^2w_1-|w_2|^2w_2)\|_{H^\gamma}
 +\frac{\tau^2}{2}
 \|\xi^2(w_1-w_2)\|_{H^\gamma}
 +\|\xi_1(w_1-w_2)\|_{H^\gamma}\\
 & +2\|\xi_2\sin(-i\tau\partial_x)(w_1-w_2)\|_{H^\gamma}
+\tau|\widehat{\xi}_0|\|w_1-w_2\|_{H^\gamma}.
 \end{align*}

For the estimate of $\zeta_1$, we refer the readers to \cite{kath1} (Lemma 3.1) for the result:
\begin{equation}\label{lm2 eq0}
\zeta_1- \|w_1-w_2\|_{H^\gamma}\lesssim\tau \|w_1-w_2\|_{H^\gamma}.
\end{equation}
Now we consider $\zeta_2$. Since $\xi,w_1,w_2\in H^{\gamma}$,
 so by Sobolev' inequality
we have
\begin{align}\label{lm2 eq1}
\zeta_2\lesssim \tau \|w_1-w_2\|_{H^\gamma}+\|\xi_1(w_1-w_2)\|_{H^\gamma}
+\|\xi_2\sin(-i\tau\partial_x)(w_1-w_2)\|_{H^\gamma}.
\end{align}
For the two regularized potentials $\xi_1$ and $\xi_2$ in (\ref{xi12}), by their definitions in Fourier space and Taylor's expansion, we have
$$\xi_1=\sum_{l\neq0}\fe^{i\mu_l(x+L)}
\frac{1-\fe^{i\tau\mu_l^2}}{\mu_l^2}\widehat{\xi}_l
=-i\tau\sum_{l\neq0}\fe^{i\mu_l(x+L)}\int_0^1\fe^{i\mu_l^2\tau
\theta}d\theta\widehat{\xi}_l,
$$
and
$$\xi_2=\sum_{l\neq0}\fe^{i\mu_l(x+L)}
\frac{i-i\fe^{i\tau\mu_l^2}-\tau\mu_l^2\fe^{i\tau\mu_l^2}}{i\tau\mu_l^3}
\widehat{\xi}_l
=\tau\sum_{l\neq0}\fe^{i\mu_l(x+L)}\mu_l\left[\int_0^1(1-\theta)
\fe^{i\tau\mu_l^2\theta}d\theta-\int_0^1
\fe^{i\tau\mu_l^2\theta}d\theta\right]\widehat{\xi}_l.
$$
Therefore, by Parseval's identity, we have
\begin{equation}\label{xi12 est}
\|\xi_1\|_{H^\gamma}\lesssim\tau \|\xi\|_{H^{\gamma}},\quad  \|\xi_2\|_{H^\gamma}\lesssim\tau \|\xi\|_{H^{\gamma+1}},
\end{equation}
and then we get from (\ref{lm2 eq1}),
\begin{align}\label{lm2 eq2}
  \zeta_2\lesssim \tau \|w_1-w_2\|_{H^\gamma}+
  \tau\|\xi\|_{H^\gamma}\|w_1-w_2\|_{H^\gamma}+
  \tau\|\xi\|_{H^{\gamma+1}}\|w_1-w_2\|_{H^\gamma}\lesssim\tau \|w_1-w_2\|_{H^\gamma} .
\end{align}
By plugging (\ref{lm2 eq2}) and (\ref{lm2 eq0}) into (\ref{lm2 eq-1}) gives the result of the lemma.
\end{proof}

Next, for the local truncation error of the LRI scheme, we have the following estimate.

\begin{lemma}\label{lm local}(Local error) Under the assumption in Theorem \ref{thm:nls}, we have the following estimate of the local truncation error of the scheme (\ref{vduhamel3}),
$$\|u(t_{n+1})-\Psi_\tau(u(t_n))\|_{H^\gamma}\lesssim \tau^3,\quad 0\leq n<\frac{T}{\tau}. $$
\end{lemma}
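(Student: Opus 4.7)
The plan is to retrace the derivation of the LRI in Section \ref{sec:lri} step by step, collect each approximation as a residual of $u(t_{n+1})-\Psi_\tau(u(t_n))$, and bound it in the $H^\gamma$-norm. The key ingredients are the \emph{a priori} regularity $u\in L^\infty((0,T), H^{\gamma+2}(\bT))$ supplied by Lemma \ref{lm1}, the algebra property $\|fg\|_{H^\gamma}\lesssim \|f\|_{H^\gamma}\|g\|_{H^\gamma}$ valid for $\gamma>\frac{1}{2}$, and the unitarity of $\fe^{is\partial_x^2}$ on $H^\gamma$ for every real $s$.

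Concretely, I would decompose $u(t_{n+1}) - \Psi_\tau(u(t_n))$ into six contributions, corresponding to: (i) the first-order substitution $V_n(\rho)\approx V_n - i\rho(\xi V_n+\lambda|V_n|^2 V_n)$ that produced (\ref{Vn 1}); (ii) the replacement $\fe^{\pm i\rho\partial_x^2}\approx 1$ used in $A_1(V_n)$, see (\ref{A1 app}); (iii) the two-step approximation of the integrand in $A_2(V_n)$ via (\ref{sec2 eq0}) and the filter (\ref{sec2 eq1}); (iv) the phase identity used to produce (\ref{Jx app}); (v) the $O(\tau^2)$ simplification of the higher-order nonlinear pieces in (\ref{N app}); and (vi) the Taylor exponentiation to $\fe^{i\lambda\tau|V_n|^2}$. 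For (i), (ii), (v), (vi), Taylor's theorem gives an $O(\tau^3)$ remainder whose constant is a multilinear product of $u(t_n)$ and $\xi$ carrying at most two spatial derivatives on either factor; the Sobolev algebra immediately yields a bound of $O(\tau^3)$. For (iv), the phase residual has Fourier symbol $O(\tau^3\mu_{l_1}\mu_{l_2}\mu_{l_3}\mu_l)$ acting on $\widehat{(\overline{V_n})}_{l_1}\widehat{(V_n)}_{l_2}\widehat{(V_n)}_{l_3}$; distributing the four factors of $\mu$ over the three inputs and the output, and applying the algebra property, bounds it by $\tau^3\|u(t_n)\|_{H^{\gamma+2}}^3\lesssim \tau^3$.

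The delicate step is (iii). The filtering error (\ref{sec2 eq1}) introduces a Fourier symbol which, after cancellation of the negative powers of $\mu_{l_1}$ using $\fe^{i\tau\mu_{l_1}^2}(i+\tau\mu_{l_1}^2)-i=O(\tau^2\mu_{l_1}^4)$, has size $O(\tau^3\mu_{l_1}\mu_{l_2}^2)$. One must carefully keep the unitary factor $\fe^{is\mu_{l_1}^2}$ separated out so that in physical space the residual reads $\tau^3 \fe^{-i\tau\partial_x^2}\bigl[(\partial_x\xi)\cdot(\partial_x^2 V_n)\bigr]$ modulo bounded operators on $H^\gamma$. Unitarity of $\fe^{-i\tau\partial_x^2}$ and the algebra property then give a contribution of size $\tau^3\|\xi\|_{H^{\gamma+1}}\|u(t_n)\|_{H^{\gamma+2}}\lesssim \tau^3$. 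Summing the six residual bounds proves the lemma. The main obstacle is the symbol-level bookkeeping in (iii), because the individual factors $\mu_{l_1}^{-2}$, $\mu_{l_1}^{-3}$ in the exact formulas look like they ought to gain regularity on $\xi$; the correct cancellation must be exploited before translating back into physical space, otherwise one loses more than two derivatives of $\xi$ and the bound fails.
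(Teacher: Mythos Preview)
Your proposal is correct and follows essentially the same route as the paper's proof. The paper organises the residual as $\|V_n(\tau)-K(V_n)-P(V_n)\|_{H^\gamma}$ and bounds the nonlinear block $\|V_n+\mathcal{N}(V_n)-K(V_n)\|_{H^\gamma}$ by citing \cite{kath1} directly, whereas you unpack that block into your items (iv)--(vi); for the potential block the paper handles your step (iii) in the same two stages you describe, first replacing $\sin(-i\tau\partial_x)$ by $-i\tau\partial_x$ using $\|\xi_2\|_{H^\gamma}\lesssim\tau\|\xi\|_{H^{\gamma+1}}$, then estimating the remaining Taylor error from (\ref{sec2 eq0}) in Fourier space to pick up $\|\xi\|_{H^{\gamma+2}}\|V_n\|_{H^{\gamma+2}}$.
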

\begin{proof}
Noting by our notation (\ref{notation Vn}) that $V_n(\tau)=\fe^{-i\tau\partial_x^2}u(t_{n+1})$ and $V_n=u(t_n)$, we have
\begin{align}\label{lm3 eq0}
\|u(t_{n+1})-\Psi_\tau(u(t_n))\|_{H^\gamma}=  \|V_n(\tau)-K(V_n)-P(V_n)\|_{H^\gamma},
\end{align}
where we denote $\Psi_\tau(w)=\fe^{i\tau\partial_x^2}
(K(w)+P(w))$ with $$K(w):=\fe^{i\lambda\tau|w|^2}w+J_1(w)+J_2(w),$$ and
\begin{align}\label{P tilde def}
 P(w):=\tilde{P}(w)-\lambda\tau^2
\xi|w|^2w-\frac{\tau^2}{2}
\xi^2w,\qquad \tilde{P}(w):=\xi_1w-2\xi_2\sin(-i\tau\partial_x)w
-i\tau\widehat{\xi}_0w,
\end{align}
for some $w$ on $\bT$. Based on the assumption, we have by Lemma \ref{lm1}, $u(t)\in H^{\gamma+2}$ for $0\leq t\leq T$, and so does the filtered variable $v(t)$ in (\ref{v def}) or $V_n(\rho)$ in (\ref{notation Vn}) for $0\leq \rho\leq\tau$ and $0\leq n<T/\tau$. Then by (\ref{vduhamel1}), for the approximation used in (\ref{Vn 1}), we have $\|V_n(\rho)-V_n\|_{H^\gamma}\lesssim\tau$. Then by Taylor's expansion and noting that $\xi\in H^{\gamma+2}$, as has also been explained in the derivation of (\ref{Vn 1}) and (\ref{vduhamel2}), we have
$$\| V_n(\tau)-
  V_n-A_1(V_n)-A_2(V_n)-\mathcal{N}(V_n)\|_{H^\gamma}\lesssim \tau^3,\quad
  0\leq n<T/\tau,$$
  with $\mathcal{N}$ and $A_1,\,A_2$ defined respectively in (\ref{Nn def}) and (\ref{A12 def}).
So by triangle inequality, (\ref{lm3 eq0}) gives
  \begin{align}\label{lm3 eq1}
\|V_n(\tau)-K(V_n)-P(V_n)\|_{H^\gamma}\lesssim&\tau^3+
\|V_n+\mathcal{N}(V_n)-K(V_n)\|_{H^\gamma}
+\left\|A_2(V_n)-\tilde{P}(V_n)\right\|_{H^\gamma}\nonumber\\
&+
\left\|A_1(V_n)+\lambda\tau^2\xi|V_n|^2V_n+\frac{\tau^2}{2}
 \xi^2V_n\right\|_{H^\gamma}.
  \end{align}
On the right-hand-side of (\ref{lm3 eq1}), the second term
is the truncation error of integrating the equation (\ref{model}) apart from
 the potential term, which reads the same as has been considered in \cite{kath1}. Noting by our \emph{a prior} estimate result in Lemma \ref{lm1} that the regularity on the solution $u$ of (\ref{model}) satisfies the assumptions in \cite{kath1} (Lemma 2.7), i.e. $u\in H^{\gamma+2}$, so here we directly quote the result:
\begin{equation}\label{lm3 eq4}
\|V_n+\mathcal{N}(V_n)-K(V_n)\|_{H^\gamma}\lesssim\tau^3. \end{equation}
We now focus on the last two terms in (\ref{lm3 eq1}) related to the approximations of the potential term. Firstly, for the approximation of $A_1(V_n)$ as in (\ref{A1 app}), we note that all the truncations here are from $\fe^{\pm is\partial_s}\approx1$. So similarly as before by Taylor's expansion,  we can clearly have
\begin{equation}\label{lm3 eq3}\left\|A_1(V_n)+\lambda\tau^2\xi|V_n|^2V_n+\frac{\tau^2}{2}
 \xi^2V_n\right\|_{H^\gamma}\lesssim\tau^3.\end{equation}
  Next, we estimate the error in approximating $A_2(V_n)$: $\|A_2(V_n)-\tilde{P}(V_n)\|_{L^2}$. Using Taylor's expansion and our estimate for $\xi_2$ in (\ref{xi12 est}), we see
 $$\|\xi_2\sin(-i\tau\partial_x)V_n+\xi_2i\tau\partial_xV_n\|_{H^\gamma}
 \lesssim\tau^2\|\xi_2\|_{H^\gamma}\|V_n\|_{H^{\gamma+2}}\lesssim
 \tau^3\|\xi\|_{H^{\gamma+1}}\|V_n\|_{H^{\gamma+2}}\lesssim\tau^3,$$
and so by noting $\tilde{P}$ defined in (\ref{P tilde def}) and using the triangle inequality, we get
 \begin{align}\label{lm3 eq2}
 \left\|A_2(V_n)-\tilde{P}(V_n)\right\|_{L^2}\lesssim
 \left\|A_2(V_n)-\xi_1V_n-2\xi_2i\tau\partial_xV_n
 +i\tau\widehat{\xi}_0V_n\right\|_{H^\gamma}
 +\tau^3.
 \end{align}
 By noting (\ref{A2 def}), and the exact integration in (\ref{tofilter}), we have
 \begin{align*}
&A_2(V_n)-\xi_1V_n-2\xi_2i\tau\partial_xV_n
 +i\tau\widehat{\xi}_0V_n\\
 =&-i\sum_{l_1+l_2=l}\fe^{i\mu_l(x+L)}\int_0^\tau
 \fe^{is\mu_{l_1}^2}\left(\fe^{2is\mu_{l_1}\mu_{l_2}}
 -1-2i\mu_{l_1}\mu_{l_2}s\right)ds\widehat{\xi}_{l_1}
 \widehat{(V_n)}_{l_2},
 \end{align*}
 and so by Taylor's expansion and Parseval's identity,
 \begin{align*}
\left\|A_2(V_n)-\xi_1V_n-2\xi_2i\tau\partial_xV_n
 +i\tau\widehat{\xi}_0V_n\right\|_{H^\gamma}
 \lesssim&\tau^3\|\xi\|_{H^{\gamma+2}}\|V_n\|_{H^{\gamma+2}}\lesssim\tau^3.
 \end{align*}
 Therefore, (\ref{lm3 eq2}) gives
$$ \left\|A_2(V_n)-\tilde{P}(V_n)\right\|_{L^2}\lesssim\tau^3,$$
and in combine with (\ref{lm3 eq4}) and (\ref{lm3 eq3}), we further get from (\ref{lm3 eq1})
 $$\|V_n(\tau)-K(V_n)-P(V_n)\|_{H^\gamma}\lesssim\tau^3.$$
  \end{proof}

 As we mentioned before, here it can be seen from the above lemma that as long as $u\in H^{\gamma+2}$, the leading order term in the truncation error of the LRI scheme is determined by the regularity of $\xi$.

 Now combing the above lemmas for local error and stability, we can give the proof of our convergence theorem.

  \emph{Proof for Theorem \ref{thm:nls}}:
  \begin{proof}
   By the triangle inequality and the scheme (\ref{vduhamel3}), we find
    for $0\leq n<T/\tau$,
    \begin{align*}
   \|u(t_{n+1})-u^{n+1}\|_{H^\gamma}\leq
   \|u(t_{n+1})-\Psi_\tau(u(t_n))\|_{H^\gamma}+
   \|\Psi_\tau(u(t_n))-\Psi_\tau(u^n)\|_{H^\gamma}.
    \end{align*}
 Under an induction argument for the boundedness of the numerical solution
 $u^n$, and then using Lemma \ref{lm local} and Lemma \ref{lm stable}, we get
 \begin{align*}
   \|u(t_{n+1})-u^{n+1}\|_{H^\gamma}-\|u(t_{n})-u^{n}\|_{H^\gamma}\lesssim\tau^3
   +
   \tau \|u(t_{n})-u^{n}\|_{H^\gamma}.
    \end{align*}
Then the theorem is proved by the Gronwall's inequality when $\tau\leq\tau_0$ for some constant $\tau_0>0$ depends on $T$ and the norms of $u_0,\xi$.
  \end{proof}
\subsection{Random potential}\label{sec:random} To finish this section, we discuss about the random potential that will be considered for our numerical experiments later.  By following the strategy from the literature  \cite{sde3,debussche2,CAL2},
we shall construct the random potential $\xi(x)$ in (\ref{model}) for our numerical tests as:
\begin{equation}\label{xi0}\xi(x)=\frac{1}{C_0}\sum_{l=-N/2}^{N/2-1}
\frac{1}{\nu_l^\theta}\widehat{\xi}_l\fe^{i\mu_l(x+L)}+c.c.,\quad x\in\bT,\quad
\mbox{with}\quad
\nu_l=\left\{\begin{split}
 &l,\quad\, \mbox{if}\ l\neq0,\\
  &1,\quad\mbox{if}\ l=0,
  \end{split}\right.
\end{equation}
where $N>0$ is the number of spatial grids, $\nu_l$ is a regularizing factor with $\theta\geq0$, $C_0>0$ is some scaling factor, the sequence $\widehat{\xi}_l$ are i.i.d. complex random variables, and c.c. denotes the complex conjugate. This  setup (\ref{xi0}) is consistent with the periodic boundary condition in (\ref{model}) for computations. By tuning the parameters, we are able to get $\xi(x)$ with the desired regularity on $\bT$, in order to verify our theoretical convergence result.  For the random Fourier components $\widehat{\xi}_l$, we will consider the case of the uniform distribution as has been considered in \cite{CAL2}. Such potential can also be viewed as a continuous analogy of the on-site random potential for the discrete model (\ref{discrete}) in the physical work \cite{prl2,prl1,pre1}. Moreover, we will also consider the normal distribution for $\widehat{\xi}_l$ in our numerical examples.
As has been shown in \cite{whitenoise}, for $\theta=0,\ C_0=O(1)$ and $N\to\infty$, when $\widehat{\xi}_l$ are taken as complex-valued standard normal distributions, the $\xi(x)$ given in (\ref{xi0}) is a Gaussian white noise which has been considered for (\ref{model}) in the physical and mathematical studies \cite{Conti,debussche3,debussche1,pre3}.

In details, we shall consider the scaling factor $C_0=O(N)$ (same as in  discrete inverse Fourier transform) in (\ref{xi0}). This scaling factor here is also served  to control the peak value of $\xi$, in order to avoid numerical stability issue, otherwise the constant $\tau_0>0$ in Theorem \ref{thm:nls} may be very small.
To verify our theoretical results in Section \ref{sec:thm}, we shall  take $\theta=2$ and $\xi_l$ as the uniform distribution in a bounded interval, so that the potential $\xi(x)\in H^2(\bT)$ as $N\to\infty$. Meanwhile, as explained above, we will test the case that $\widehat{\xi}_l$ satisfy the normal distribution. As will be shown by our numerical results, for both cases of the considered random potentials with initial data $u_0\in H^{2}$, the expectation of the global error of the fully discretized LRI scheme (\ref{vduhamel3}) (with Fourier pseudo-spectral method in space) reads
$$\mathbb{E}\left(\|u-u^n\|_{L^2}\right)=O(\tau^2)+O(h^2),\quad 0\leq n\leq T/\tau,$$
for a fixed time $T>0$. The temporal error part reads the same as has been stated in the Theorem \ref{thm:nls}. For the spatial error part in the above, we can formally see it as follows. In evaluation of the LRI scheme (\ref{vduhamel3}), we need to compute the terms
$\fe^{i\tau\partial_x^2}(
\xi|u^n|^2u^n)$, $\fe^{i\tau\partial_x^2}(
\xi^2u^n)$ and $\fe^{i\tau\partial_x^2}(\widehat{\xi}_0u^n)$. Therefore, if $\xi\in H^{2}$ and $u\in H^2$, the interpolation error from them is then known to be  $O(h^2)$ under the $L^2$-norm \cite{Shen}. With smoother initial data, e.g. for $u_0$ better than $H^2$ so that $u$ is smoother as stated in Lemma \ref{lm1p5}, the spatial error will not be improved,  because the dominated error will then be the part from the interpolation of $\xi$, which is still $O(h^2)$.

Afterwards, we will perform the corresponding numerical tests under the less regular situation for potential $\xi(x)$, e.g. $\theta=0$ in (\ref{xi0}).
In these random potential cases, as we shall show by extensive numerical studies in the next section, for the fully discretized LRI scheme (\ref{vduhamel3}),
the expectation of the global error appears to be
$$\mathbb{E}\left(\|u-u^n\|_{L^2}\right)=O(\tau^p)+O(h^q),\quad 0\leq n\leq T/\tau,$$
with some $1<p<2$ and $0<q<1$ for a fixed $T>0$.

%\section{Extension to disordered nonlinear Dirac equation}

\section{Numerical results}\label{sec:result}
In this section, we are going to conduct numerical tests of the presented methods, i.e. the classical methods from Section \ref{sec:method} and the proposed LRI method (\ref{vduhamel3}). We shall present the numerical results to illustrate and compare the accuracy of the methods.

\subsection{Test on discrete D-NLS}
Firstly, let us present the test of the classical methods, i.e. the splitting methods and the finite difference methods, on the discrete model (\ref{discrete}) truncated to a finite-size periodic lattice:
\begin{equation}\left\{\begin{split}
&i\dot{u}_l(t)=-J[u_{l+1}(t)+u_{l-1}(t)]
+\xi_lu_l(t)+\lambda|u_l(t)|^2u_l(t),\quad t>0,\ -N\leq l\leq N,\\
&u_{-N}(t)=u_{N}(t),\quad u_{-N-1}(t)=u_{N-1}(t),\quad
u_{N+1}(t)=u_{-N+1}(t),\quad t\geq0,\label{model2}\\
&u_l(0)=u_l^0,\quad  -N\leq l\leq N, \end{split}\right.
\end{equation}
 as have been done in the physical studies \cite{review,prl2,prl1,pre1}. This test is made here to confirm the optimal convergence rate of the classical methods on the discrete model, and to compare later with their performances on the continuous D-NLS (\ref{model}). In (\ref{model2}), we take $J=1$, $N=128$ for the lattice size and $\lambda=1$ as for the defocusing interaction. The discrete random potential $\xi_l$ are taken as i.i.d. uniform distribution in $[-1,1]$ or standard normal distribution, and the initial data is imposed as zero except on three central sites:
$$u_{-1}=u_{1}=1/4,\quad u_0=1/2.$$
The detailed schemes of the Strang splitting method and the semi-implicit finite difference (FD) method on the discrete model (\ref{model2}) are given in Appendix \ref{sec:app}. The expectation of their relative errors $\mathbb{E}(\|u-u^n\|_{l^\infty}/\|u\|_{l^\infty})$ at $t_n=T=1$ under 100 samples are plotted in Figure \ref{fig:discret}. Here the reference solution is obtained for each sample by Strang splitting method with time step $\tau=10^{-4}$.

\begin{figure}[t!]
$$\begin{array}{cc}
\psfig{figure=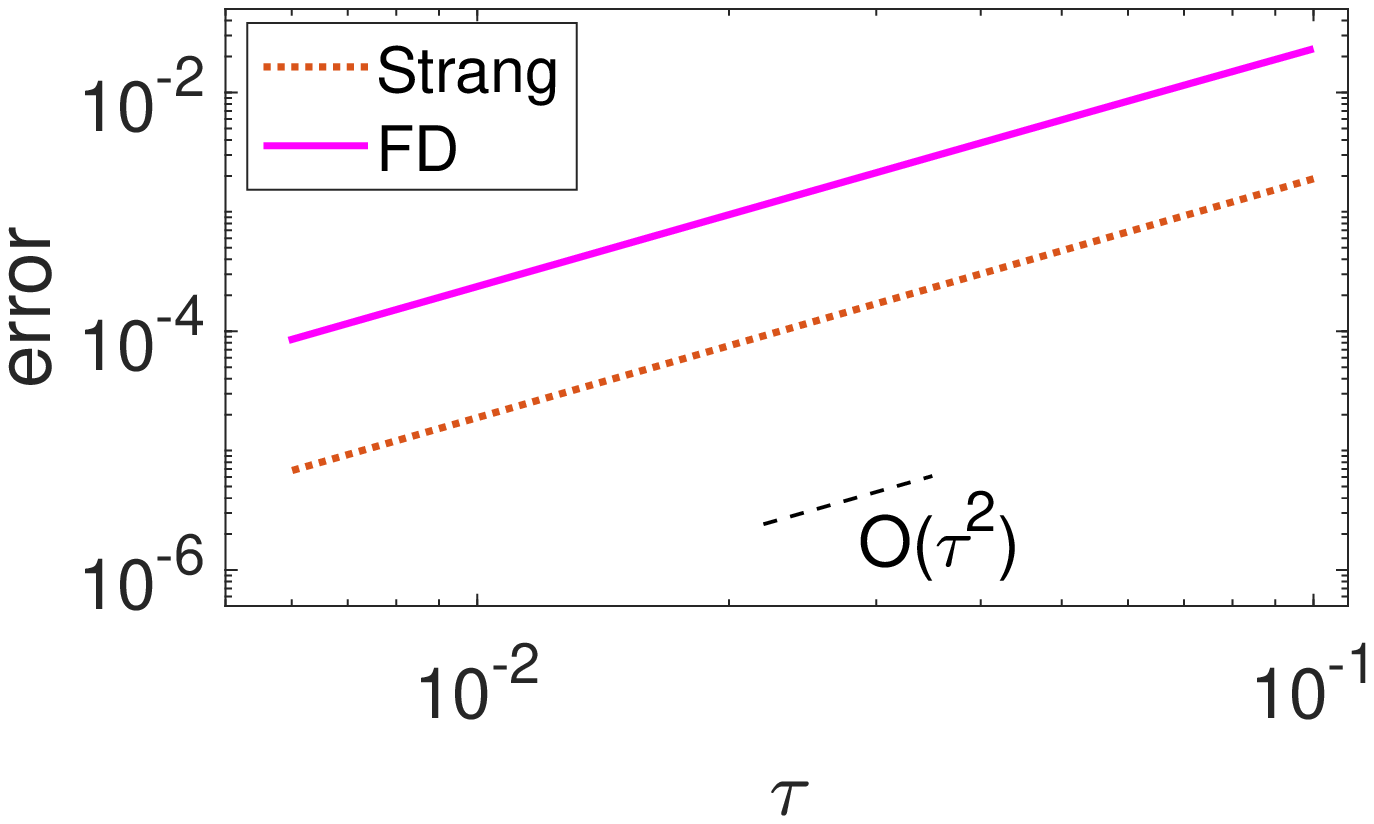,height=4.5cm,width=7cm}&
\psfig{figure=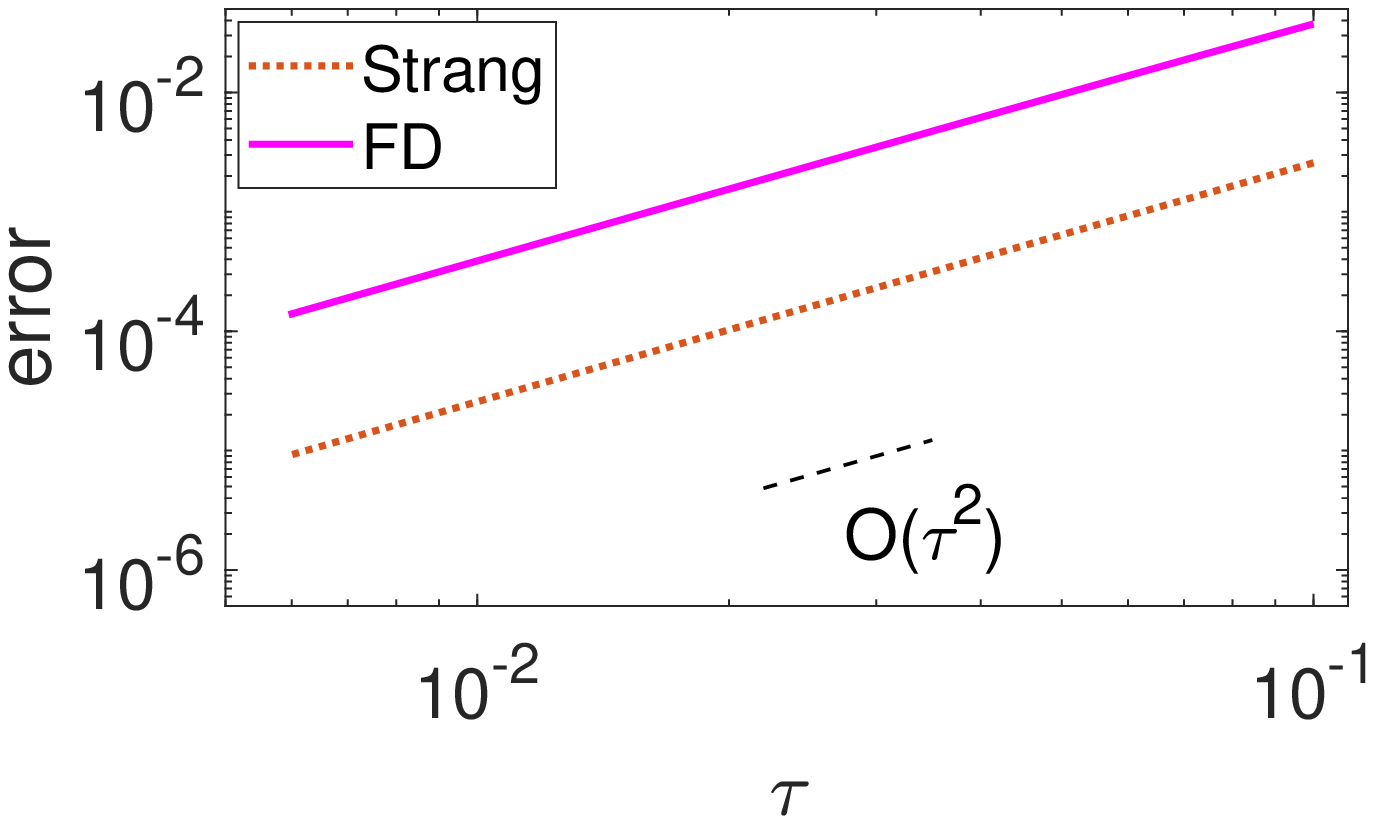,height=4.5cm,width=7cm}
\end{array}$$
\caption{Convergence of the classical schemes on (\ref{model2}): the error $\mathbb{E}(\|u-u^n\|_{l^\infty}/\|u\|_{l^\infty})$ with potential generated by  uniform distribution (left) or normal distribution (right).}
\label{fig:discret}
\end{figure}

It can seen from Figure \ref{fig:discret} that the errors of the two classical integrators converge at the second order rate which is their optimal convergence rate, and the splitting scheme is more accurate than the finite difference scheme. They and their higher order extensions can simulate the discrete D-NLS (\ref{discrete}) accurately and efficiently.

\begin{figure}[t!]
$$\begin{array}{cc}
\psfig{figure=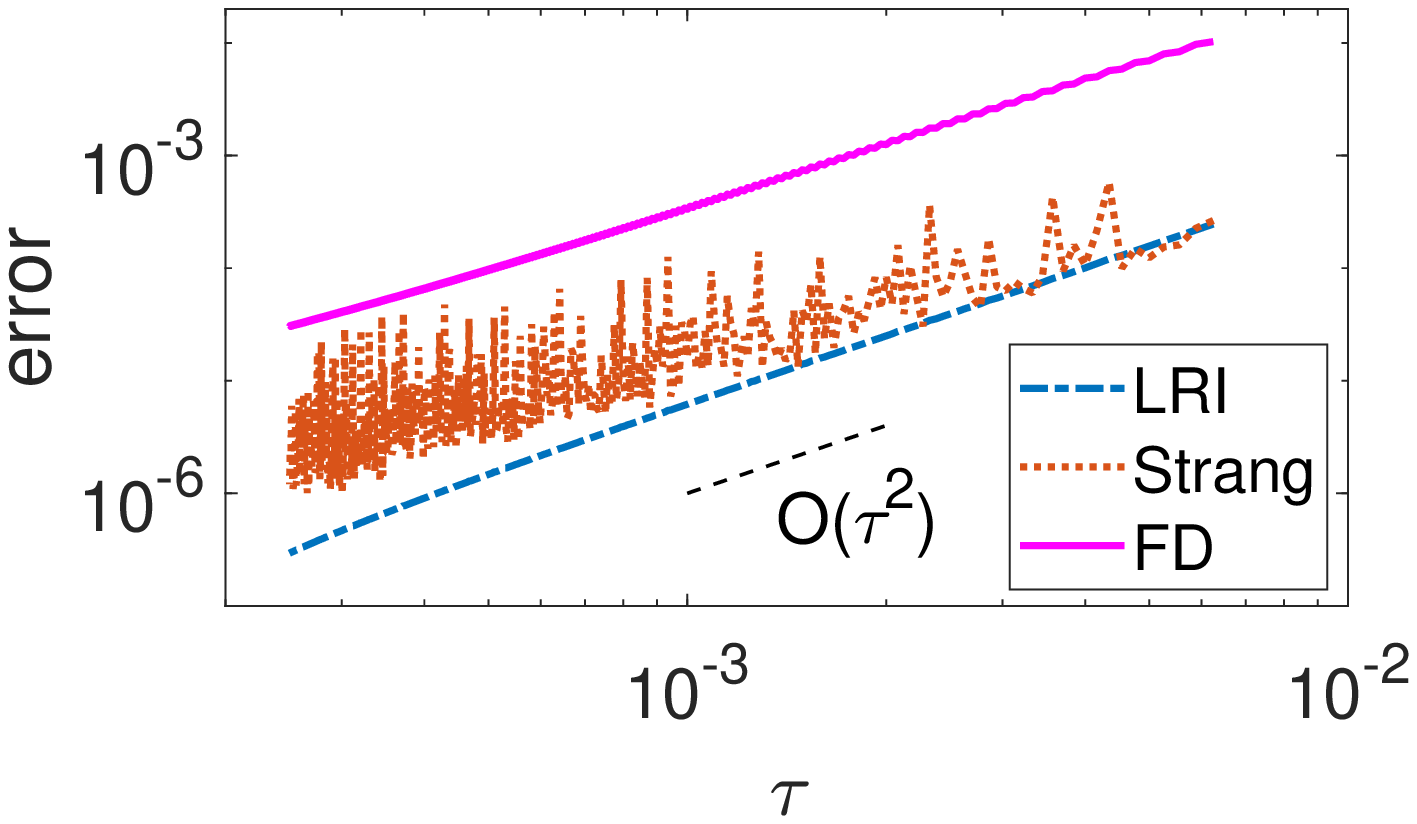,height=4.5cm,width=7cm}&
\psfig{figure=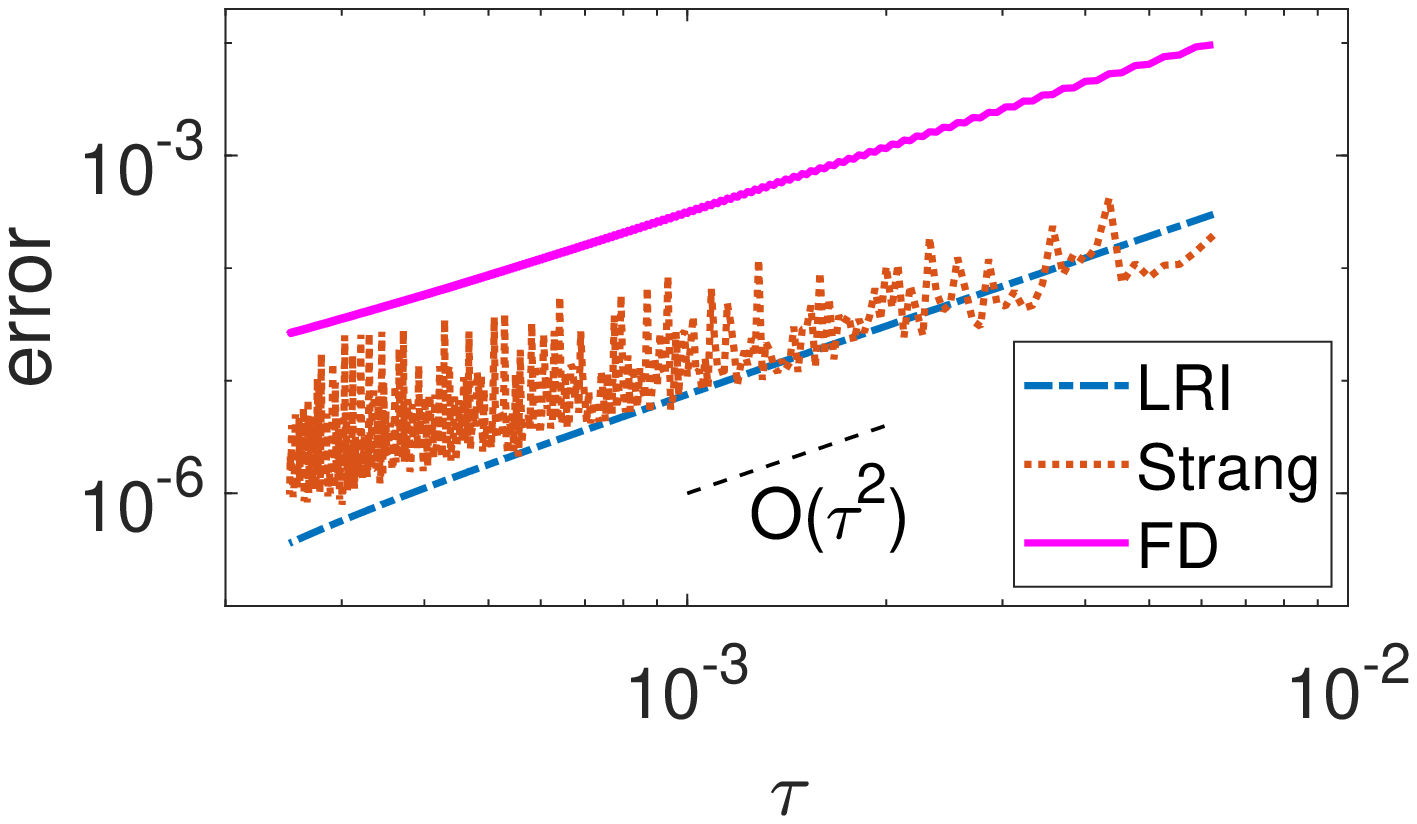,height=4.5cm,width=7cm}
\end{array}$$
\caption{Temporal convergence in Example \ref{ex1} with $\xi\in H^2$ and smooth $u_0$: the error $\mathbb{E}(\|u-u^n\|_{L^2}/\|u\|_{L^2})$  under potential generated by uniform distribution (left) and normal distribution (right).}
\label{fig:V2smooth}
\end{figure}

\begin{figure}[t!]
$$\begin{array}{cc}
\psfig{figure=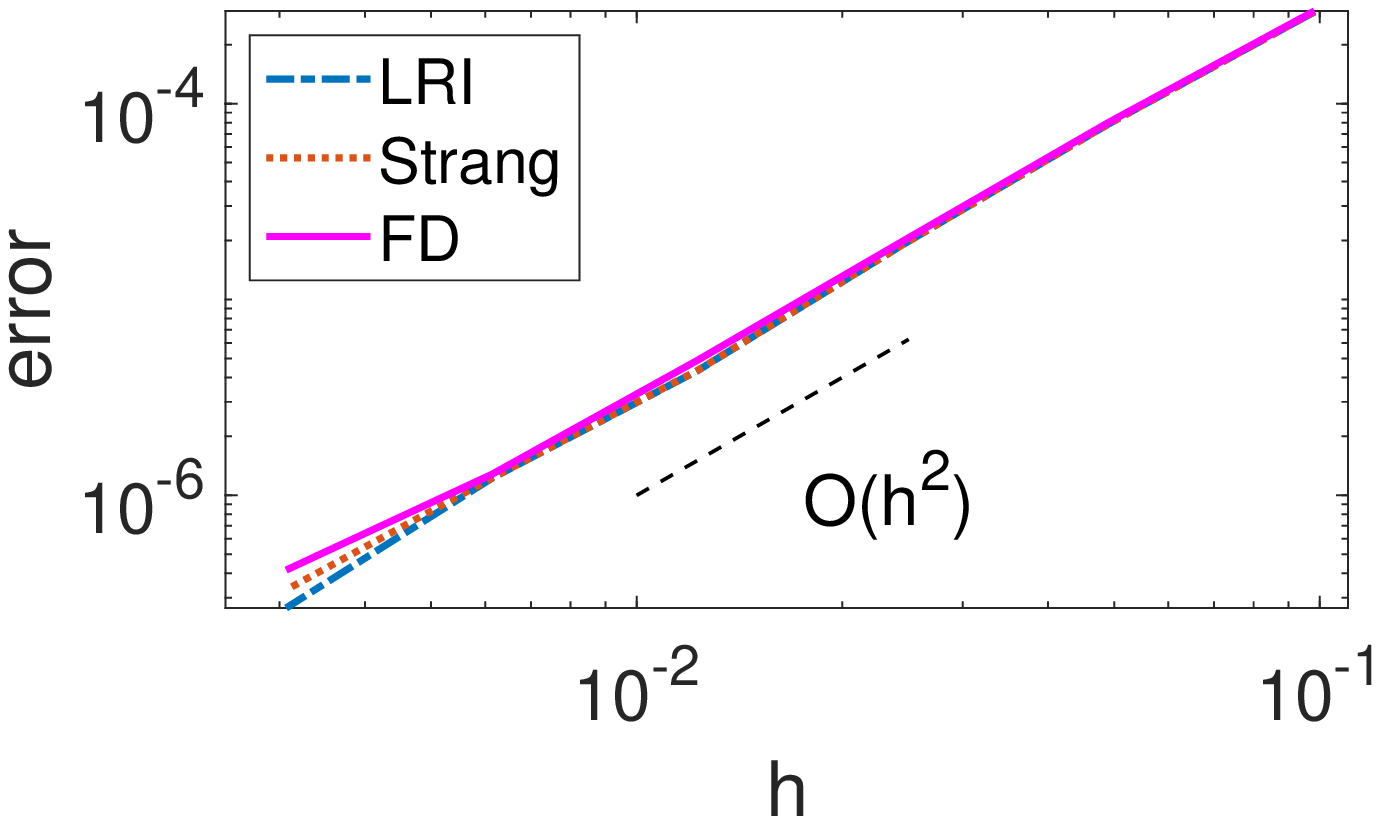,height=4.5cm,width=7cm}&
\psfig{figure=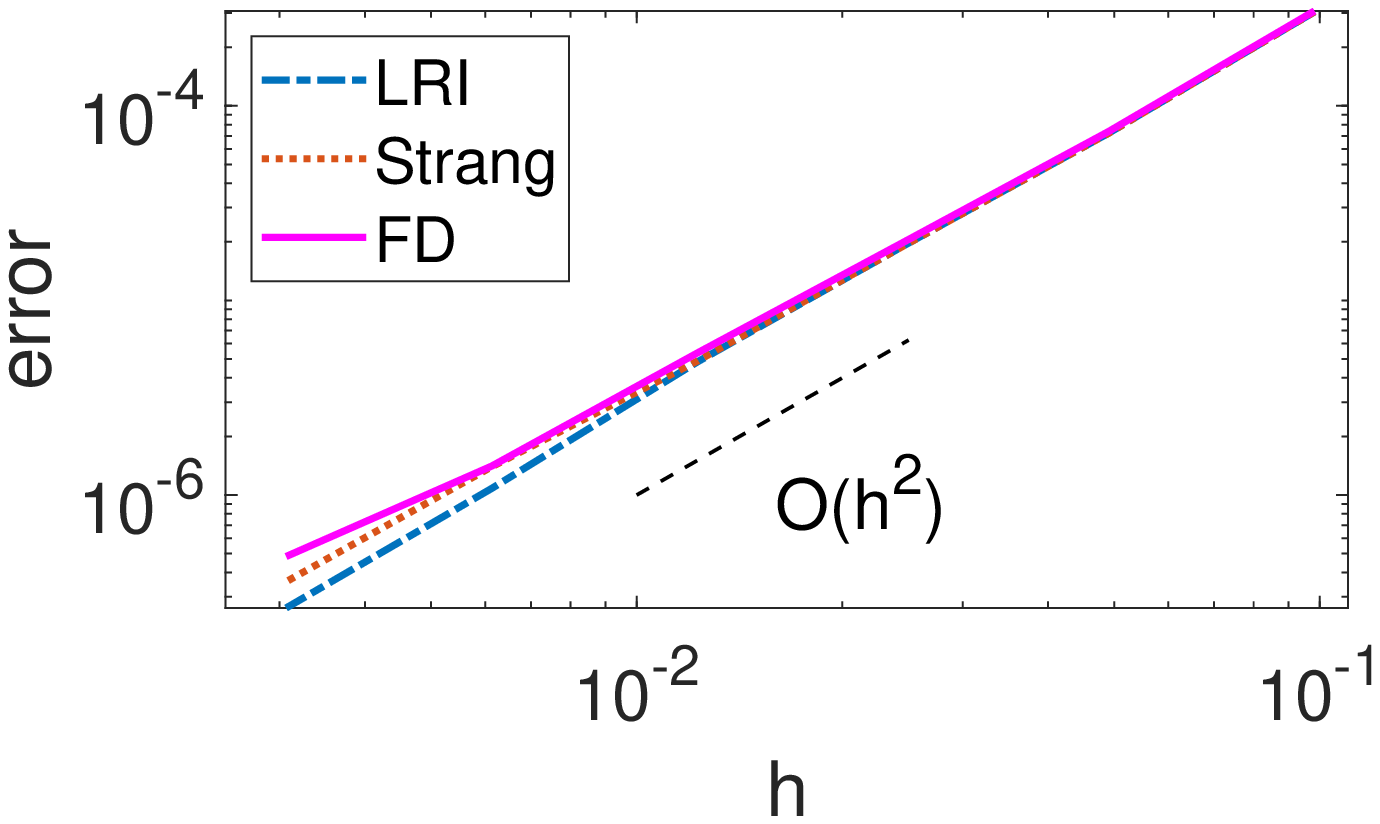,height=4.5cm,width=7cm}
\end{array}$$
\caption{Spatial convergence in Example \ref{ex1} with smooth $u_0$: the error $\mathbb{E}(\|u-u^n\|_{L^2}/\|u\|_{L^2})$ under potential generated by uniform distribution (left) and normal distribution (right).}
\label{fig:V2smoothspace}
\end{figure}

\begin{figure}[t!]
$$\begin{array}{ccc}
\psfig{figure=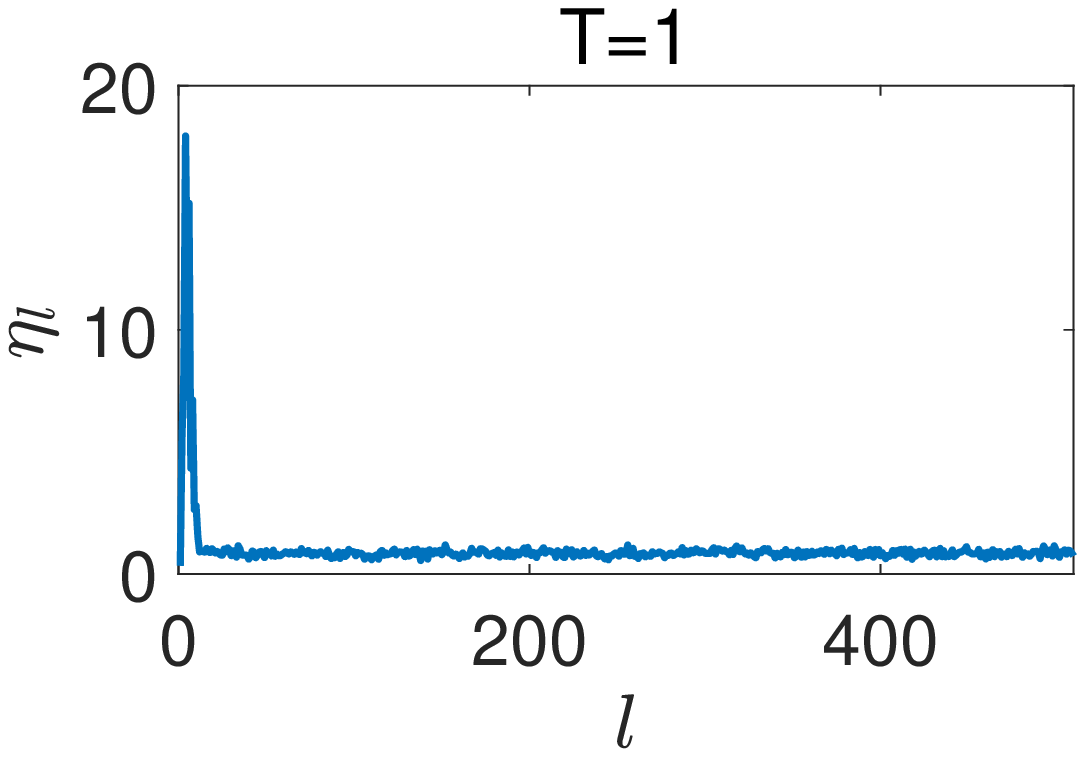,height=4.0cm,width=4.5cm}&
\psfig{figure=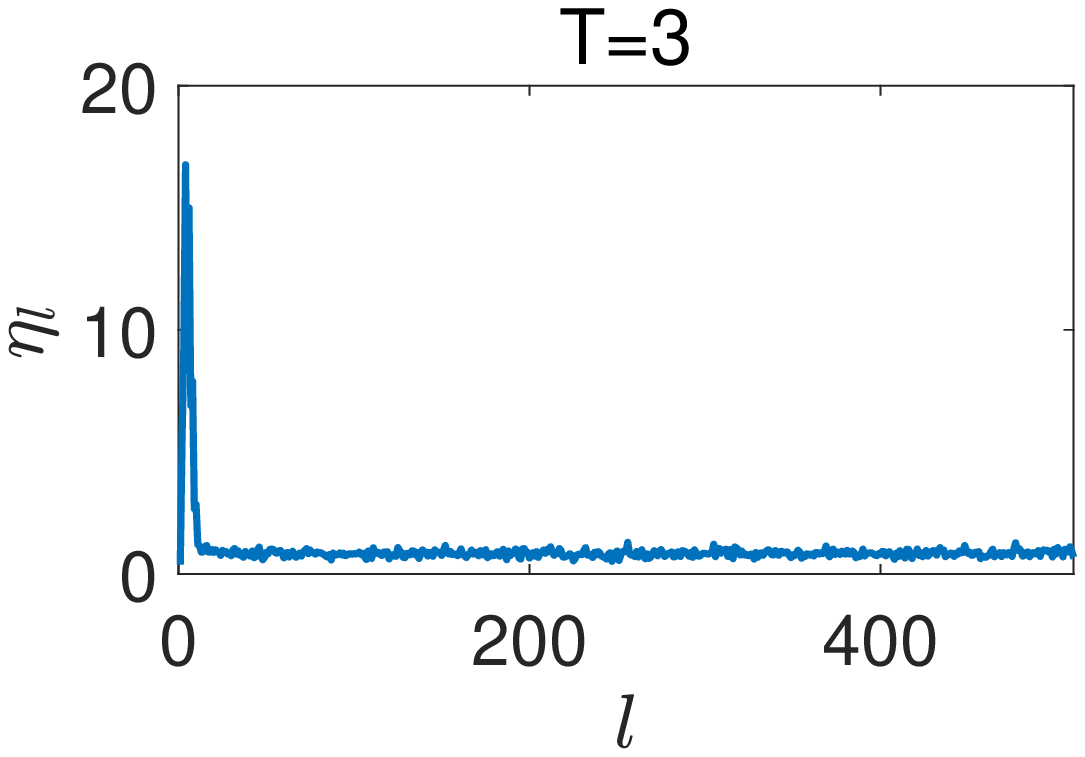,height=4.0cm,width=4.5cm}&
\psfig{figure=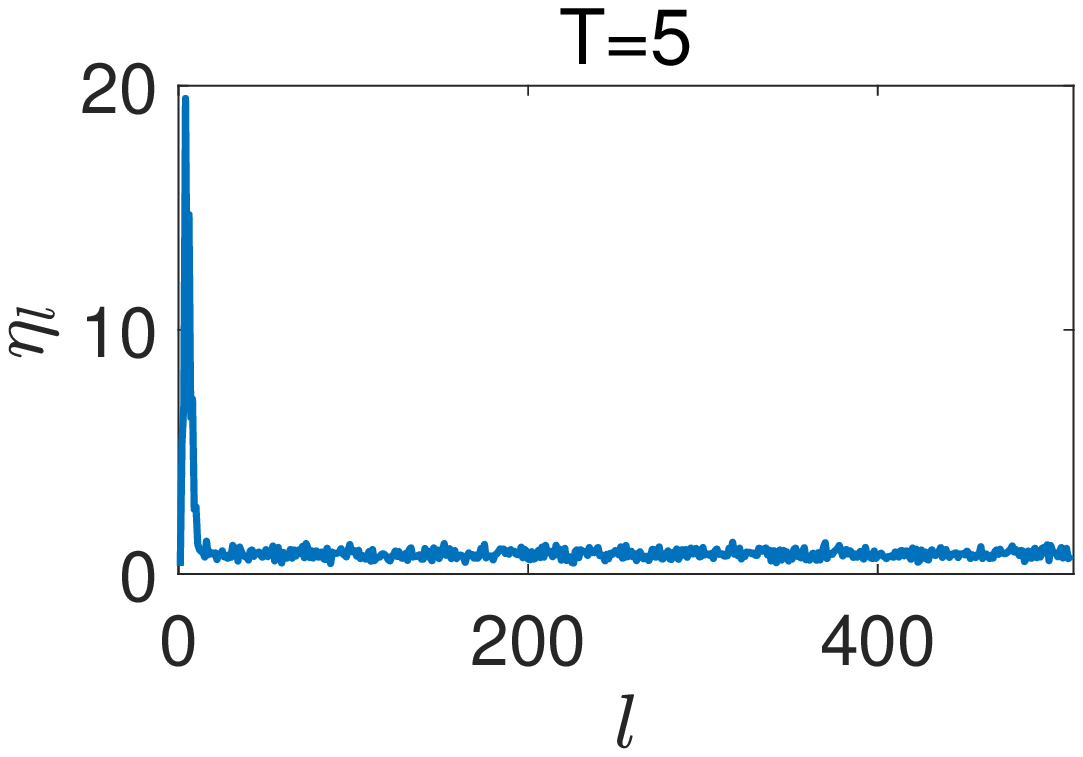,height=4.0cm,width=4.5cm}
\end{array}$$
\caption{Profile of scaled Fourier models $\eta_l(t)=\mathbb{E}(|\widehat{u}_l(t)|l^4)$ in Example \ref{ex1} with $u_0$ smooth at different time $t=T$ (potential generated by normal distribution).}
\label{fig:smooth mode}
\end{figure}

\begin{figure}[t!]
$$\begin{array}{cc}
\psfig{figure=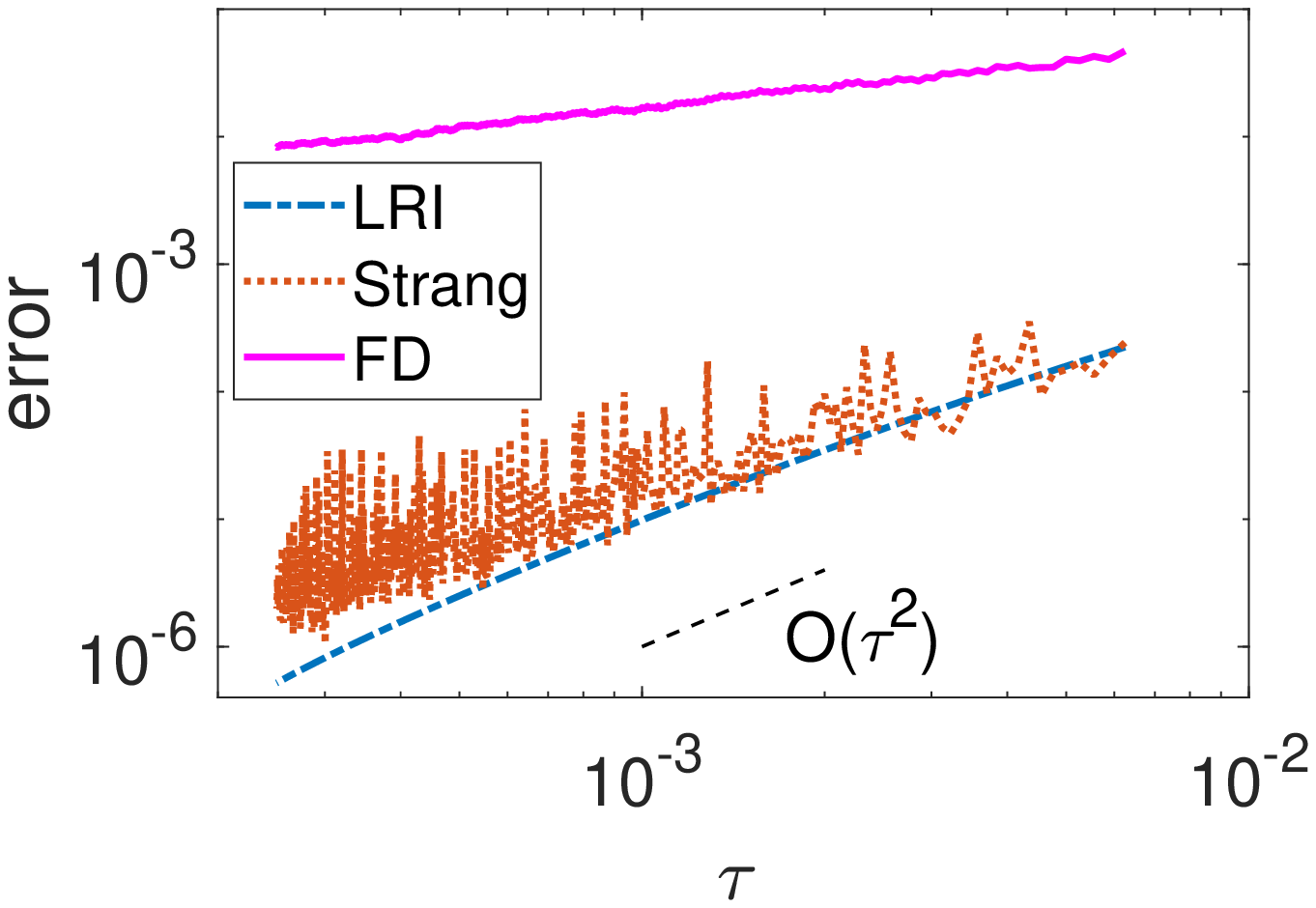,height=4.5cm,width=7cm}&
\psfig{figure=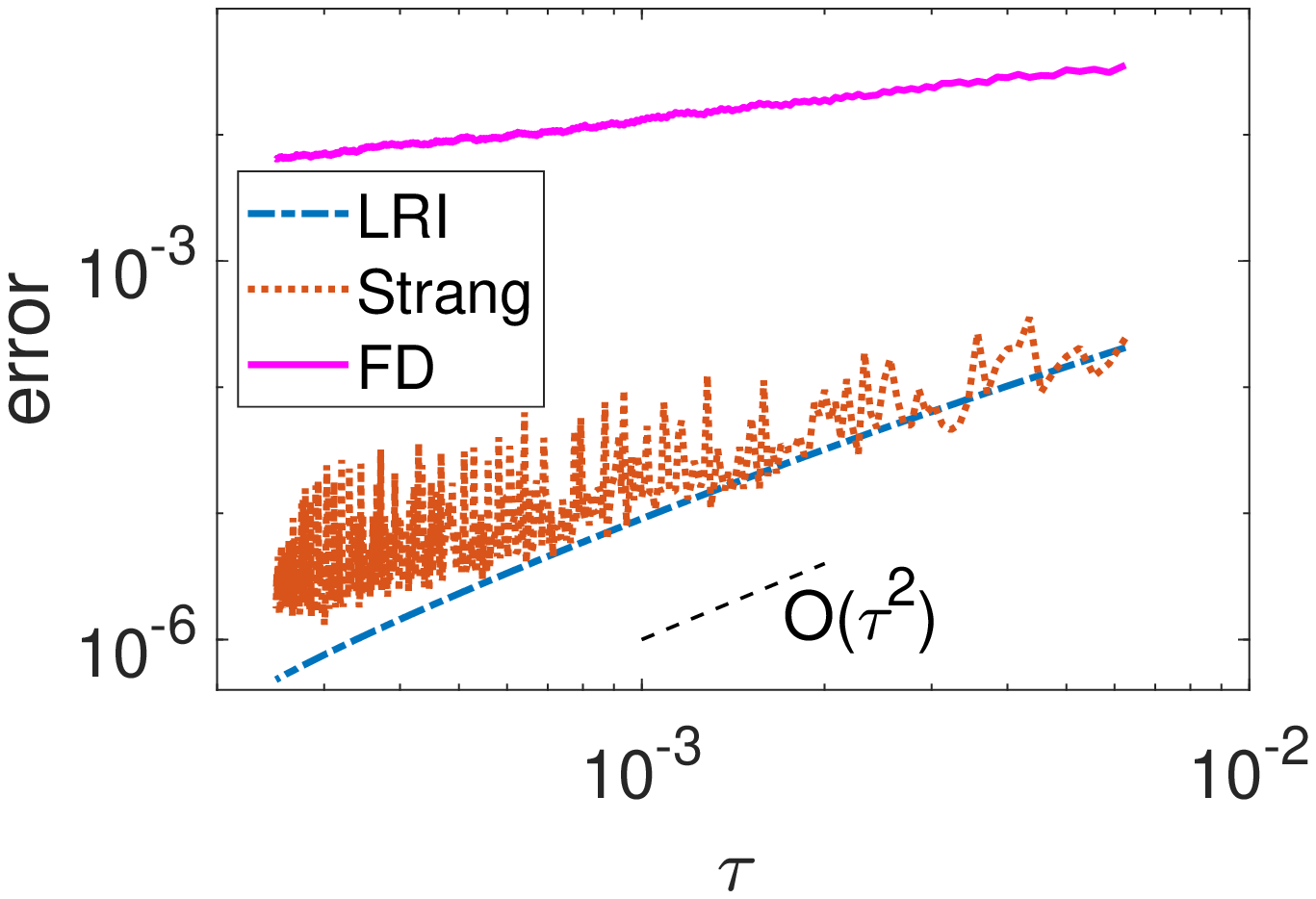,height=4.5cm,width=7cm}
\end{array}$$
\caption{Temporal convergence in Example \ref{ex1} with $u_0\in H^2$: the error $\mathbb{E}(\|u-u^n\|_{L^2}/\|u\|_{L^2})$  under potential generated by uniform distribution (left) or normal distribution (right).}
\label{fig:V2H2}
\end{figure}

%\begin{figure}[t!]
%$$\begin{array}{ccc}
%\psfig{figure=T1normal.eps,height=4.5cm,width=4.9cm}&
%\psfig{figure=T3normal.eps,height=4.5cm,width=4.9cm}&
%\psfig{figure=T5normal.eps,height=4.5cm,width=4.9cm}
%\end{array}$$
%\caption{Profile of the scaled Fourier models $\eta_l(t)=\mathbb{E}(|\widehat{u}_l(t)|l^2)$ at different time $t=T$ under potential generated by normal distribution.}
%\label{fig:mode}
%\end{figure}

\subsection{Test on continuous D-NLS}
Now, we conduct tests of the classical methods: the Strang splitting method (\ref{strang}) and the FD method (\ref{FD}), and the proposed LRI method (\ref{vduhamel3}) on the continuous D-NLS (\ref{model}). We will consider three numerical examples as follows.

\begin{example}\label{ex1} ($H^2$-potential)
Firstly, to verify our theoretical result, we consider a numerical experiment of a defocusing case $\lambda=1$ on a torus $\bT=(-\pi,\pi)$ with $\xi(x)\in H^2(\bT)$ for (\ref{model}). We use the strategy as mentioned before in (\ref{xi0}) to construct the spatial random potential $\xi$. Precisely, we take% the  $C_0=N$ and
%$
%\xi_l=\omega_{1,l}+i\omega_{2,l},$
\begin{equation}\label{xi}
\xi_0(x)=|\partial_{x,N}|^{-\theta}(\omega_1+i\omega_2)+c.c.,\quad
\xi(x)=\frac{2\xi_0(x)}{\|\xi_0\|_{L^\infty}},\quad x\in\bT,
\end{equation}
where $\omega_1,\,\omega_2\in\bR^N$ are two i.i.d. random vectors of length $N>0$ with $N$ equal to the number of spatial grids, and the pseudo-differential operator $|\partial_{x,N}|^{-\theta}$ reads: for Fourier modes $l=-N/2,\ldots, N/2-1$,
\begin{equation*}
 \left(|\partial_{x,N}|^{-\theta}\right)
 _l:=\left\{\begin{split}
 &|l|^{-\theta},\quad \mbox{if}\ l\neq0,\\
  &1,\qquad\ \, \mbox{if}\ l=0,
  \end{split}\right.\qquad \theta\geq0.
\end{equation*}
%where $\omega_{1,l},\,\omega_{2,l}\in\bR$ are i.i.d. random variables for $-N/2\leq l\leq N/2-1$ with $N$ equal to the number of spatial grids, to generate the random potential $\xi_0$, and then we further scale it: $\xi=2\xi_0/\|\xi_0\|_{L^\infty}$.
 We fix $\theta=2$ in this example for $\xi(x)\in H^2(\bT)$. The random variables  $\omega_{1},\,\omega_{2}$ are taken either as the uniform distribution in $[-1,1]$ or as the standard normal distribution.  For the initial input in (\ref{model}), we first take a smooth periodic initial function:
$$u_0(x)=\frac{\cos(x)+i\sin(2x)}{1+\sin(x)^2},\quad x\in\bT.$$

We use 100 samples of the potential from (\ref{xi}), and compute the expectation of the $L^2$-norm of the relative  error: \begin{equation}\label{error def}
error=\mathbb{E}(\|u(\cdot,t=t_n)-u^n\|_{L^2}/\|u(\cdot,t=t_n)\|_{L^2}),
\end{equation}
at $t_n=T=0.5$. The reference solution of each sample is obtained by Strang splitting method under very small mesh size, e.g. $\tau=10^{-5}$ and $N=2^{13}$. To test the temporal accuracy, we fix spatial mesh size $h=2\pi/N$ with $N=2^{13}$. The errors of the Strang splitting method (\ref{strang}), the FD method (\ref{FD}) and the LRI method (\ref{vduhamel3}) are plotted in Figure \ref{fig:V2smooth}. The spatial errors of the methods are illustrated in Figure \ref{fig:V2smoothspace} under fixed time step $\tau=10^{-5}$.

To illustrate the regularity of the solution under the setup, we apply the LRI method to investigate the asymptotical decay rate of the Fourier modes $\widehat{u}_l(t)$
of (\ref{model}) with respect to $l$. We consider the potential (\ref{xi}) generated by the standard normal distribution and compute the average of the scaled modes  $\eta_l(t):=\mathbb{E}(|\widehat{u}_l(t)|l^4)$. In Figure \ref{fig:smooth mode}, we plot $\eta_l(t)$  for modes $0<l<N/2$ at different time $t$ obtained by the LRI (\ref{vduhamel3}) under $N=2^{10}$ and $\tau=5\times10^{-5}$. The corresponding result for $\xi$ generated by the uniform distribution is very similar, and so it is omitted here for brevity.

To further test the impact from the smoothness of the initial data and verify the convergence of our LRI scheme under the precise setup as stated in Theorem \ref{thm:nls}, we now switch to a $H^2$-data constructed similarly as the potential:
$$u_0(x)=\frac{|\partial_{x,N}|^{-2}\mathcal{U}_N}{
\||\partial_{x,N}|^{-2}\mathcal{U_N}\|_{L^\infty}},\quad x\in \bT,
\quad \mathcal{U}_N=\mathrm{rand}(N,1)+i\,\mathrm{rand}(N,1),$$
with $\mathrm{rand}(N,1)$ uniformly distributed in $[0,1]^N$. Under the same computational parameters, the corresponding temporal error is plotted in Figure \ref{fig:V2H2}.
For the spatial error, the results from the Strang splitting method and the LRI method in this rough initial data case are very similar as in the smooth data case shown in Figure \ref{fig:V2smoothspace}. The FD method in this case significantly loses convergence rate, which is as expected. Since the spatial discretization error under rough data case is not what we aim to emphasize, so the results are omitted here for brevity.
%We take and the mesh size $h=2\pi/N$.
\end{example}

Based on the numerical results from Figures \ref{fig:V2smooth}-\ref{fig:V2H2}, we can see that

1) The convergence results for $\xi$ generated from the uniform distribution and from the normal distribution in (\ref{xi}) are very similar (see Figures \ref{fig:V2smooth}\&\ref{fig:V2smoothspace}\&\ref{fig:V2H2}). For the uniform distribution case of potential $\xi\in H^2$ and the rough initial data $u_0\in
H^2$, the presented LRI scheme (\ref{vduhamel3}) can reach its optimal second order
convergence rate in time (see Figure \ref{fig:V2H2}), which justifies our theoretical convergence result in Theorem
\ref{thm:nls}. For normal distribution case, the convergence of LRI is observed in the sense of expectation, and such strong convergence needs further rigorous analysis.

2) Under the potential $\xi\in H^2$ and the smooth initial data case, the solution of (\ref{model}) is in $H^4$ (see Figure \ref{fig:smooth mode}), which justifies the result in Lemma \ref{lm1p5}. In such case, the FD method still converges at the second order rate in time, which is surprising, but we note that similar super-convergence phenomenon of the FD method has also been reported in \cite{dnls-fd} and may need future analysis. Under the rough initial data case, i.e. $H^2$-data, the FD method suffers from the expected order reduction in time (see Figure \ref{fig:V2H2}).

3) The temporal error of the Strang splitting method shows wide oscillations and a unstable convergence rate, which significantly downgrades its computational efficiency. Under the smooth initial data case and the $H^2$-data case, the Strang splitting method has very close convergence results (see Figures \ref{fig:V2smooth}\&\ref{fig:V2H2}). This tells that the error of Strang splitting is mainly determined by the roughness from the potential.

4) For the spatial discretization error, under smooth data case, the three tested methods all show the second order convergence rate in space (see Figure \ref{fig:V2smoothspace}), where the dominate error is from the interpolation of the rough potential $\xi$ as we explained in Section \ref{sec:random}.

\begin{figure}[t!]
$$\begin{array}{cc}
\psfig{figure=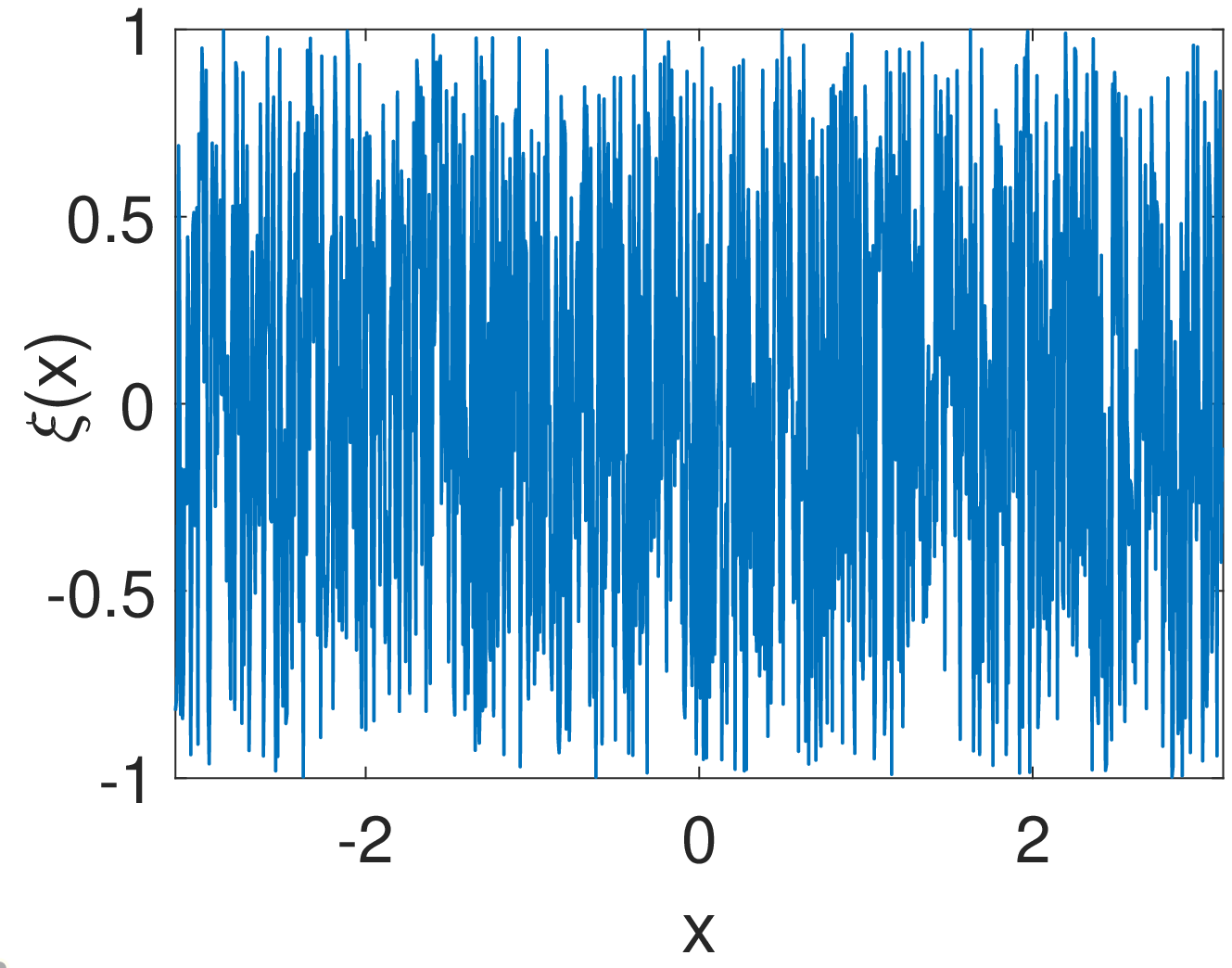,height=4.0cm,width=7cm}&
\psfig{figure=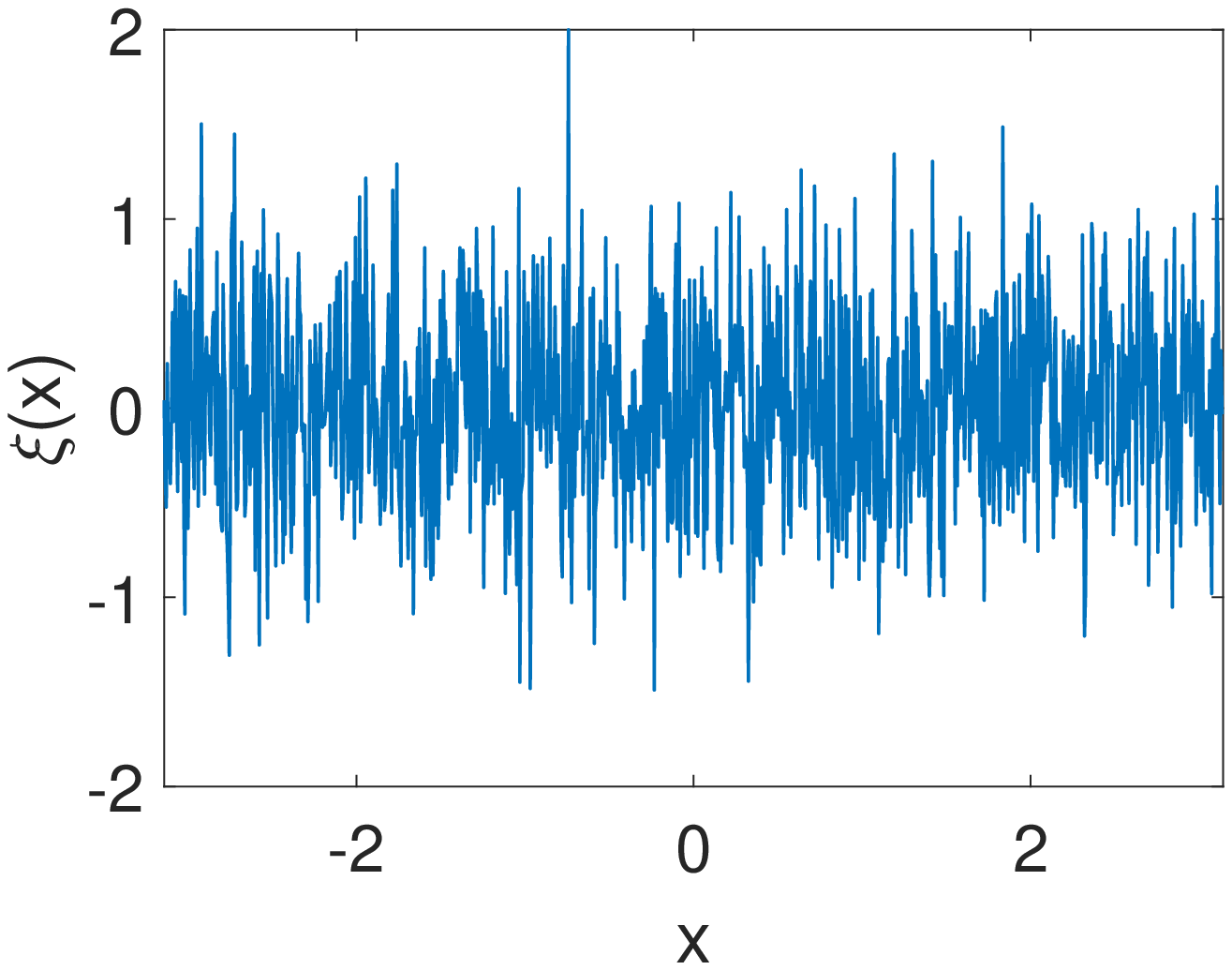,height=4.0cm,width=7cm}
\end{array}$$
\caption{Profile of a generated $\xi(x)$ in Example \ref{ex2}: pointwisely defined uniform distribution (left) or  (\ref{xi}) with $\theta=0$ under normal distribution (right).}
\label{fig:potential}
\end{figure}

\begin{figure}[t!]
$$\begin{array}{cc}
\psfig{figure=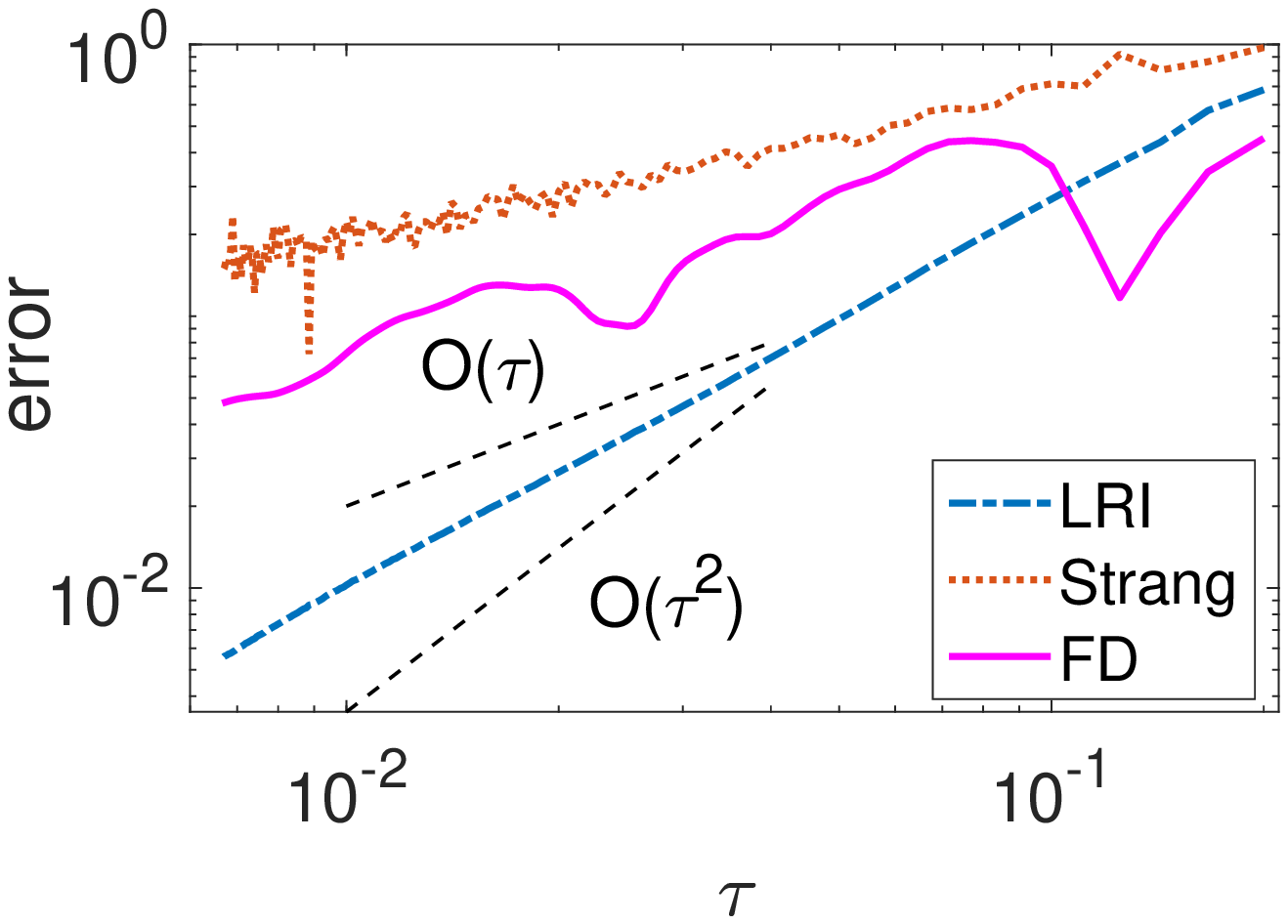,height=4.5cm,width=7cm}&
\psfig{figure=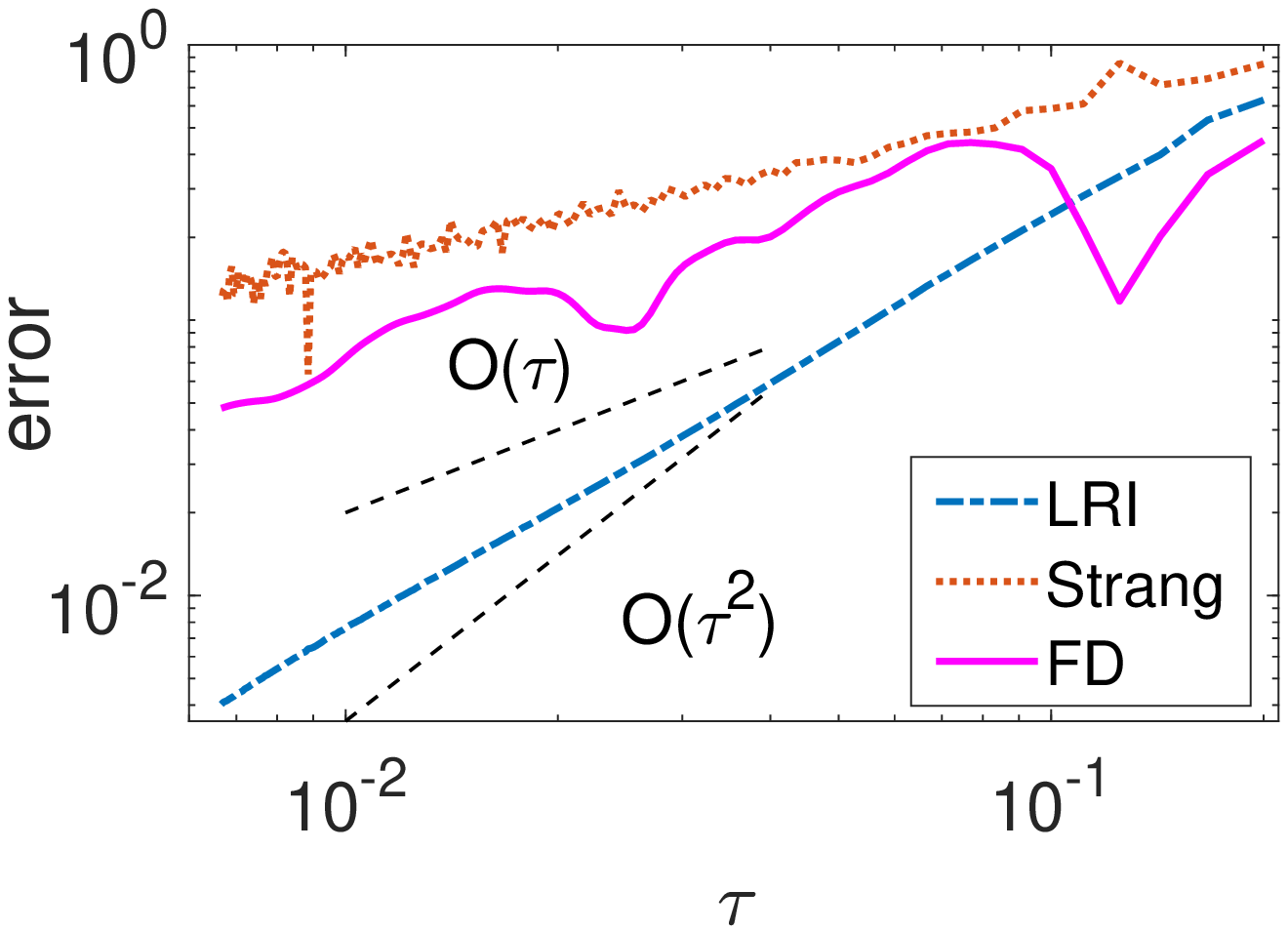,height=4.5cm,width=7cm}
\end{array}$$
\caption{Temporal convergence in Example \ref{ex2}: the error $\mathbb{E}(\|u-u^n\|_{L^2}/\|u\|_{L^2})$ under potential generated by pointwise uniform distribution (left) or (\ref{xi}) with normal distribution (right).}
\label{fig:1}
\end{figure}

\begin{figure}[t!]
$$\begin{array}{cc}
\psfig{figure=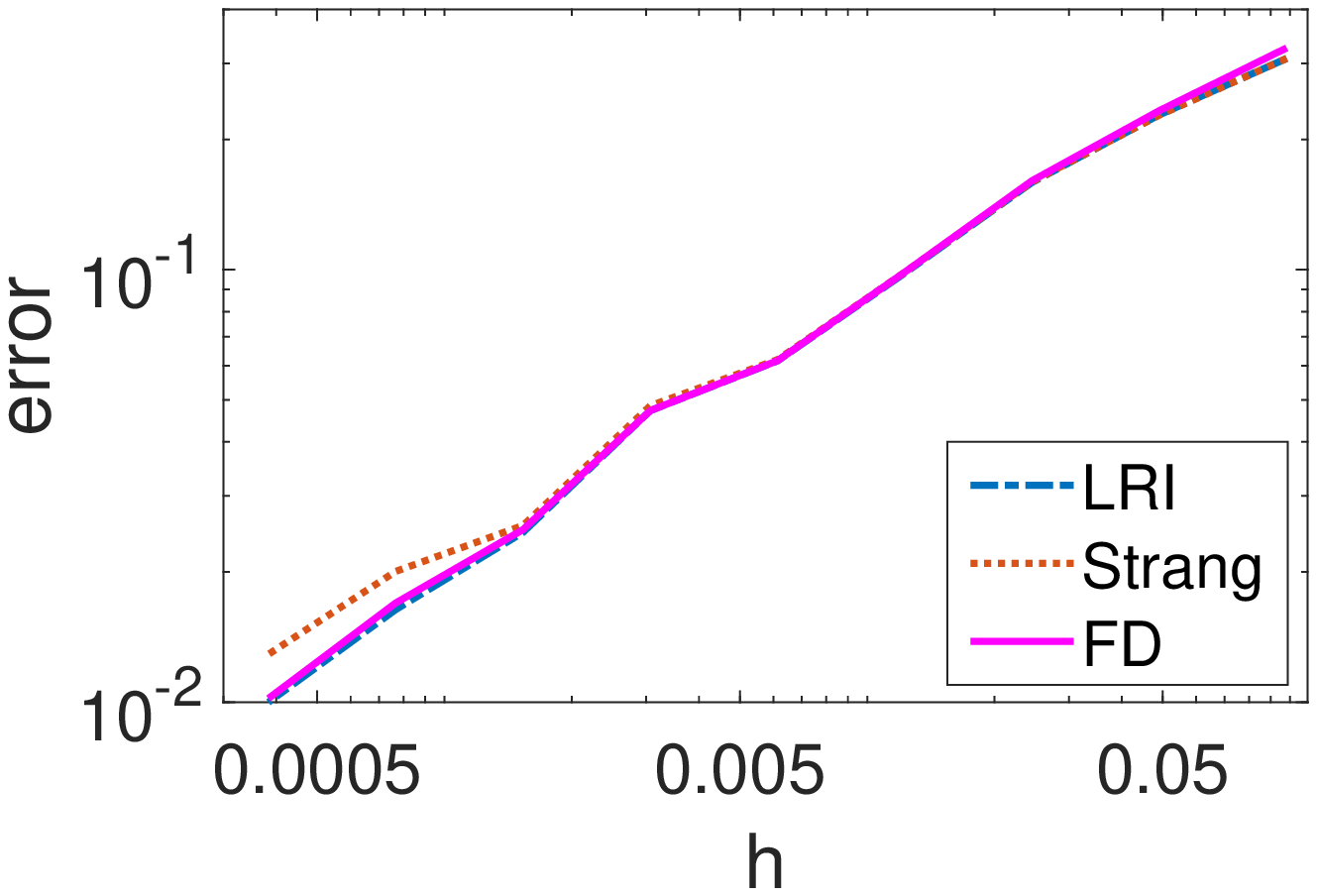,height=4.5cm,width=7cm}&
\psfig{figure=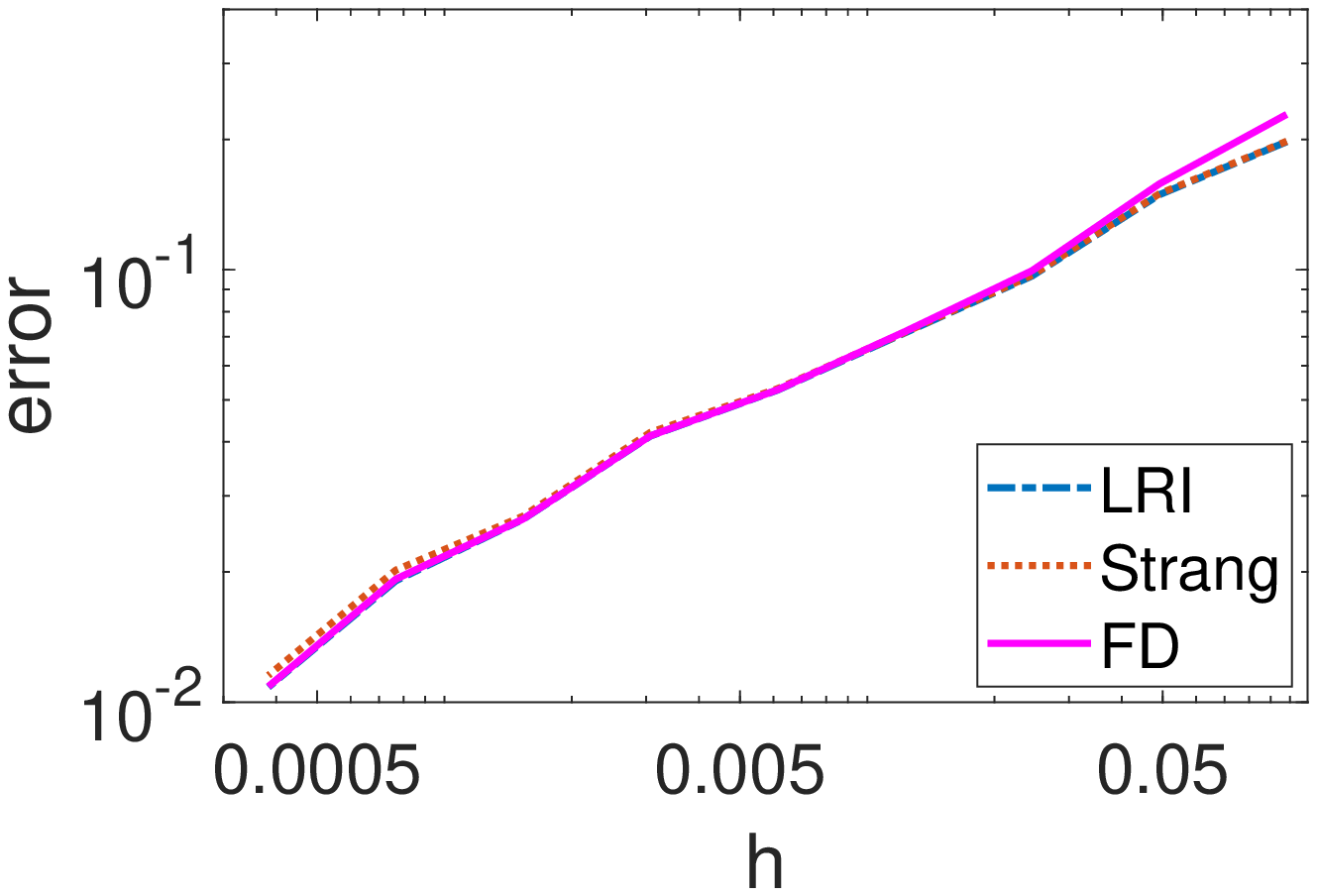,height=4.5cm,width=7cm}
\end{array}$$
\caption{Spatial convergence in Example \ref{ex2}: the error $\mathbb{E}(\|u-u^n\|_{L^2}/\|u\|_{L^2})$ under potential generated by pointwise uniform distribution (left) or (\ref{xi}) with normal distribution (right).}
\label{fig:space}
\end{figure}

\begin{figure}[t!]
$$\begin{array}{ccc}
\psfig{figure=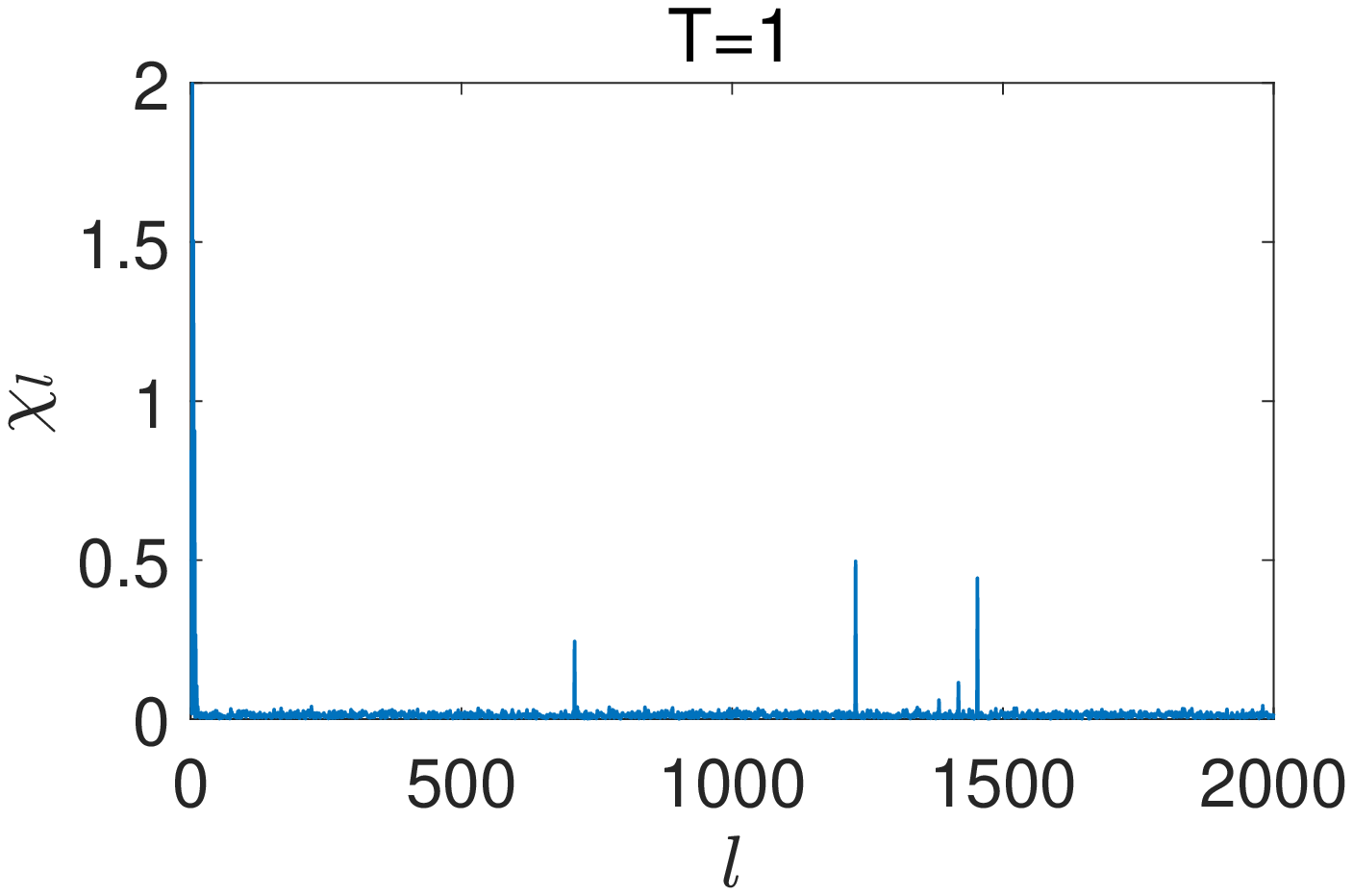,height=4.0cm,width=4.5cm}&
\psfig{figure=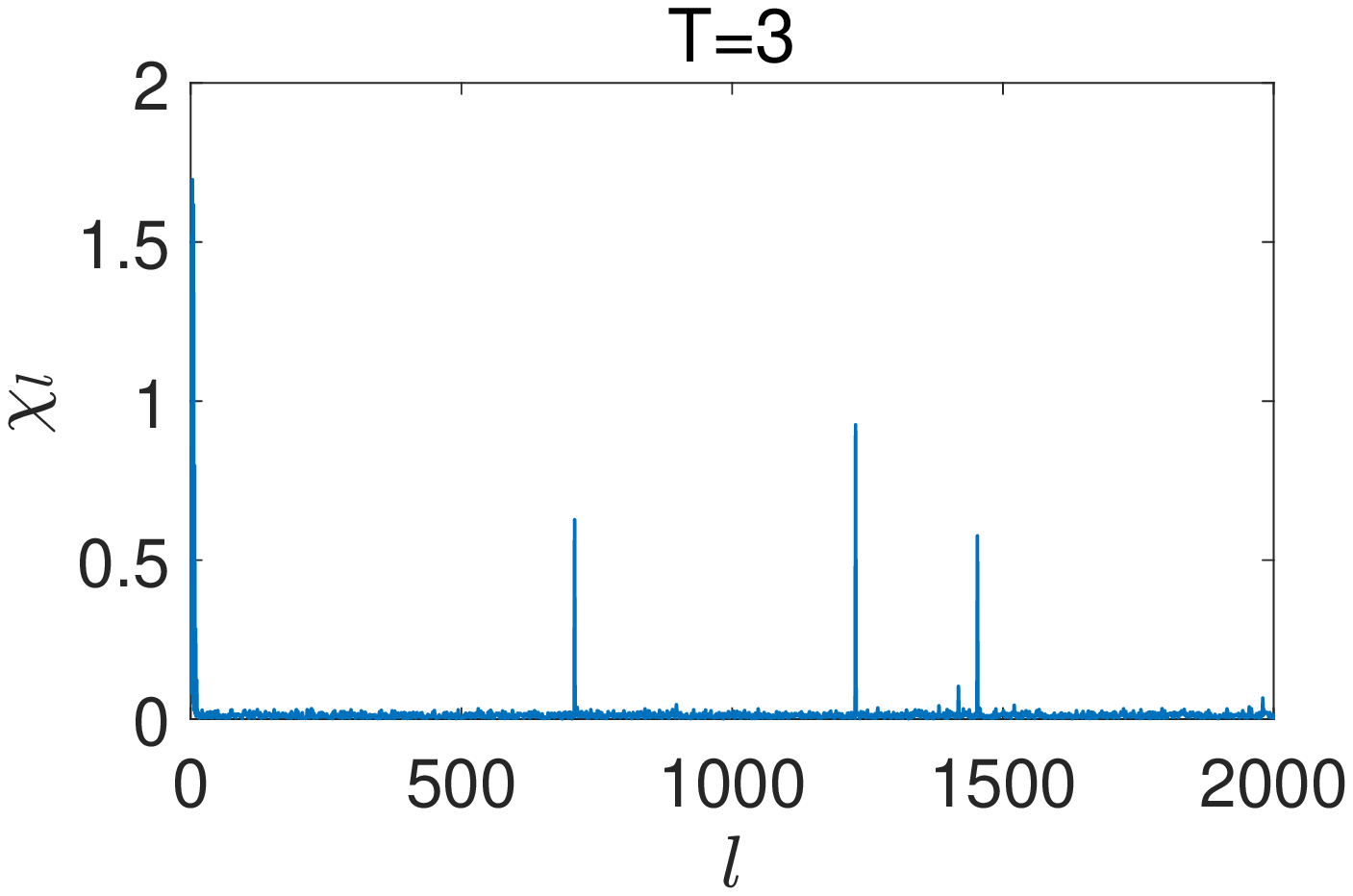,height=4.0cm,width=4.5cm}&
\psfig{figure=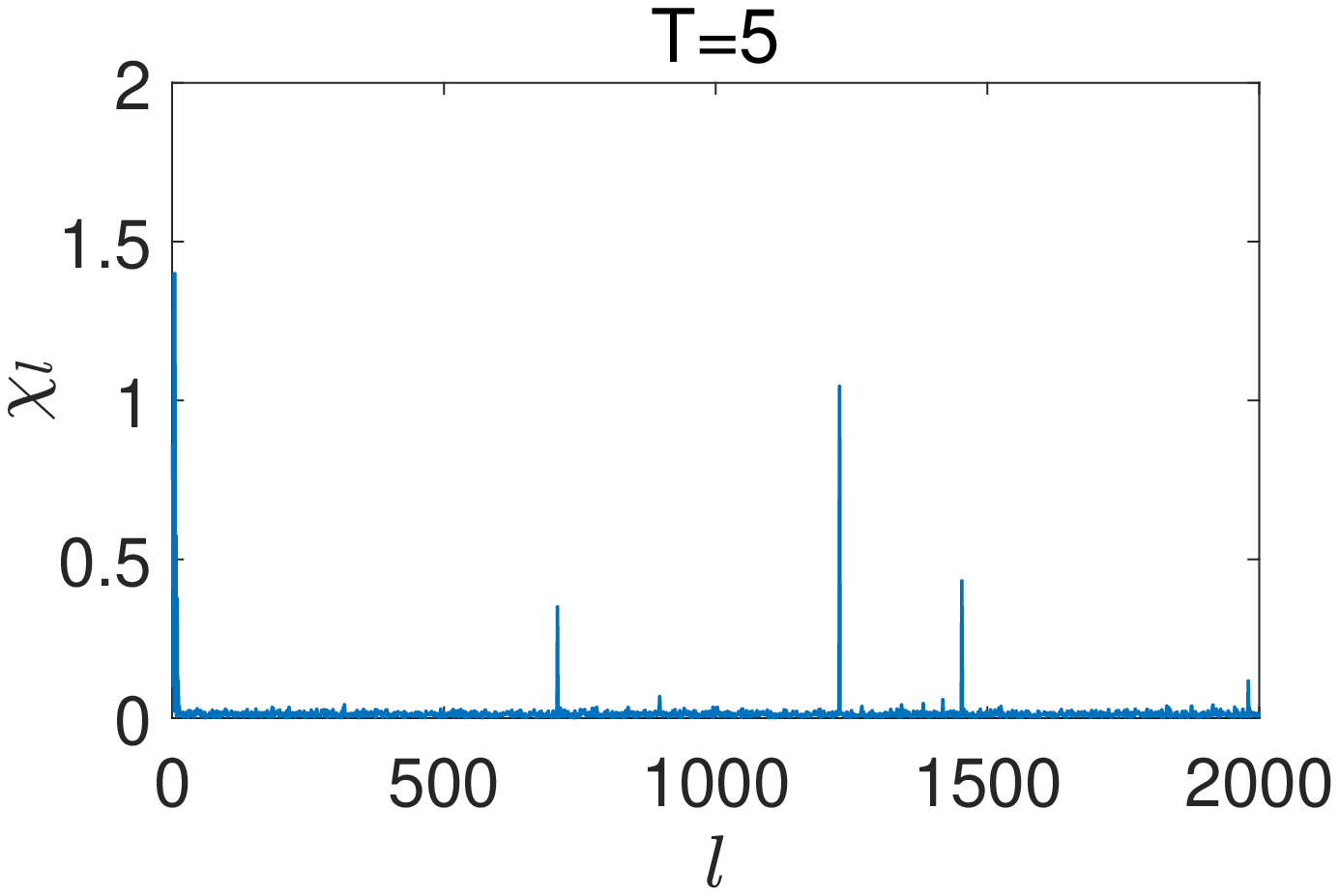,height=4.0cm,width=4.5cm}
\end{array}$$
\caption{Profile of scaled Fourier models $\chi_l(t)=\mathbb{E}(|\widehat{u}_l(t)|l^2)$ in Example \ref{ex2} at different time $t=T$ (potential generated by pointwise uniform  distribution).}
\label{fig:H2 mode}
\end{figure}

\begin{example}\label{ex2} ($L^2$-potential)
We now turn to test the presented numerical methods under the case of potentials  rougher than that can be covered in our convergence theorem. In (\ref{model}), we take $\lambda=1$ and consider potential $\xi\in L^2(\bT)$ with $\bT=(-\pi,\pi)$, which mimics the discrete case in (\ref{model2}) pointwisely:
\begin{equation}\label{pw xi}
  \xi(x_j)=\xi_j,\quad x_j=-\pi+jh,\quad j=0,1,\ldots,N-1,
\end{equation}
where $\xi_j$ are i.i.d. uniform distribution in $[-1,1]$ and $h=2\pi/N$ is the spatial mesh size with some integer $N>0$. We take a smooth initial data as
$$u_0(x)=\frac{2\cos(x)}{2+\sin(2x)}+i\cos(x),\quad x\in\bT.$$
Meanwhile, we also consider the kind of potential $\xi$ constructed from (\ref{xi}) with $\omega_1,\omega_2$ as the standard normal distribution, but now we take the regularizing parameter $\theta=0$.
As one sample of such two kinds of potentials, we use $N=2^{10}$ and the profiles of the $\xi(x)$ obtained as described are shown in Figure \ref{fig:potential}.
\end{example}

For convergence tests,
we take 100 samples of the random potential, and we solve (\ref{model}) for each one by the presented numerical methods till $t_n=T=2$. For each sample, the reference solution can be obtained either by the FD method (\ref{FD}) or by the LRI method (\ref{vduhamel3}) under very small mesh size, e.g.  $\tau=10^{-4}$ and the number of spatial grids $N=2^{16}$.
For each numerical method, again we compute the expectation of the relative error in $L^2$-norm (\ref{error def})
under different temporal or spatial mesh size.

To show the temporal accuracy, we fix the spatial mesh size $h=2\pi/N$ with $N=2^{16}$,  and the errors of the Strang splitting method (\ref{strang}), the FD method (\ref{FD}) and the LRI method (\ref{vduhamel3}) are plotted in Figure \ref{fig:1}. The corresponding spatial errors of the methods  are illustrated in Figure \ref{fig:space} under fixed time step $\tau=10^{-4}$.

At last, to illustrate the regularity of the solution in this example, again we consider the decay rate of the Fourier modes of the solution. We take 50 samples of the potential $\xi(x)$ generated by the pointwise uniform distribution (\ref{pw xi}) and compute the averaged scaled modes  $\chi_l(t):=\mathbb{E}(|\widehat{u}_l(t)|l^2)$. In Figure \ref{fig:H2 mode}, we plot the $\chi_l(t)$ for $0<l<N/2$ at different time $t$ obtained by the LRI method (\ref{vduhamel3}) under $N=2^{12}$ and $\tau=5\times 10^{-4}$.

Based on the numerical results in Figures \ref{fig:1}\&\ref{fig:space}, and other similar results not shown here for brevity, we have the following observations:

1) The temporal convergence rate of the Strang splitting method (\ref{strang}) and the FD method (\ref{FD}) are quite low (less than one) under the presence of the two kinds of very rough potentials (see Figure \ref{fig:1}). The LRI method converges in time with a rate between one and two, and it is much more accurate than the two classical methods.

2) For the space discretization, the error of all the methods are almost the same and they converge at a low rate (less than one) due to the very rough potential (see Figure \ref{fig:space}). Again, the convergence results of the two kinds of potentials are very close both in time and in space.

3) The solution in this example is at most in $H^2$ (see Figure \ref{fig:H2 mode}). The rougher potential, e.g. $\xi\in L^2$ makes the solution of (\ref{model}) less regular than that in Example \ref{ex1}.

\begin{figure}[t!]
$$\begin{array}{cc}
\psfig{figure=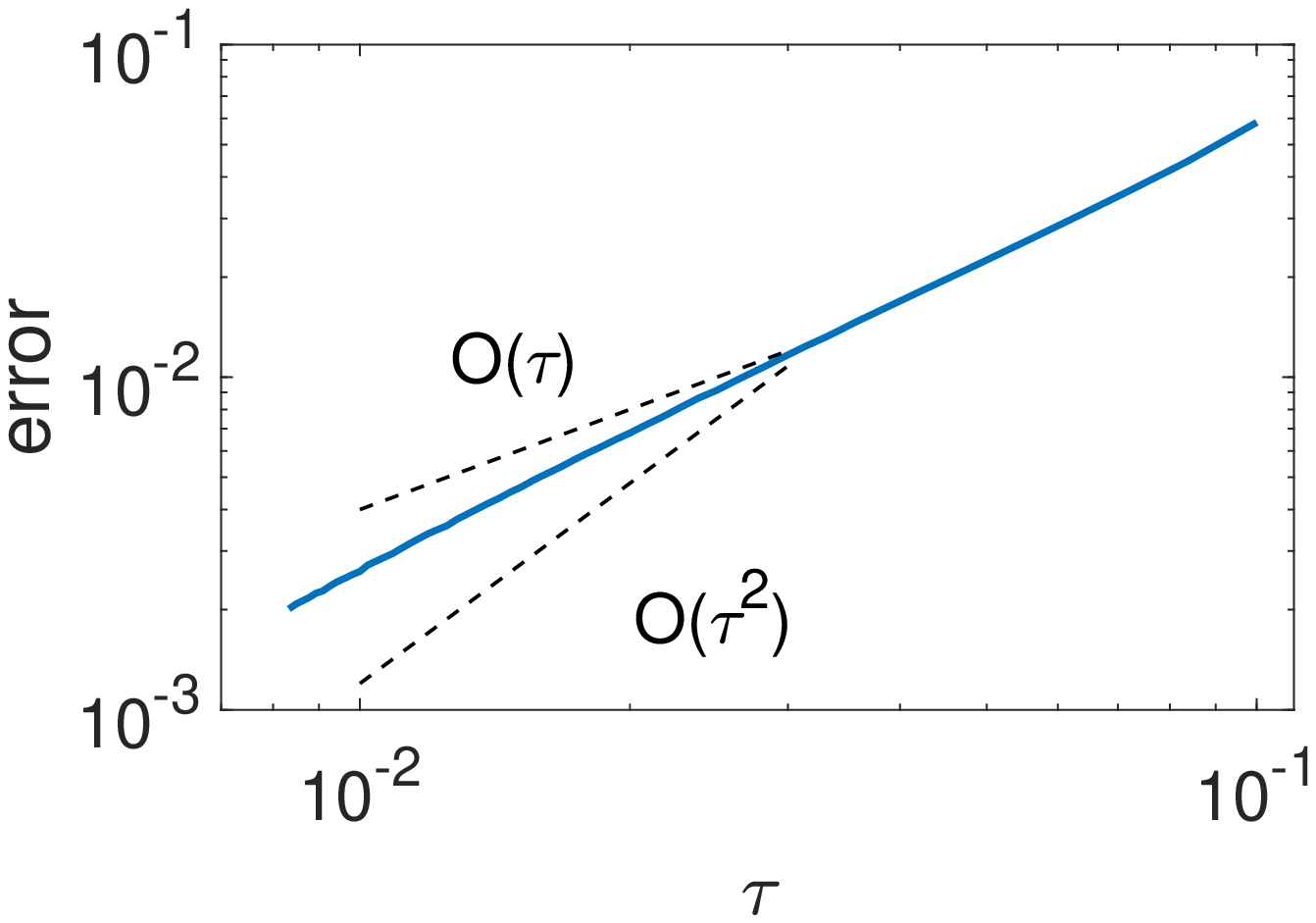,height=4.5cm,width=7cm}&
\psfig{figure=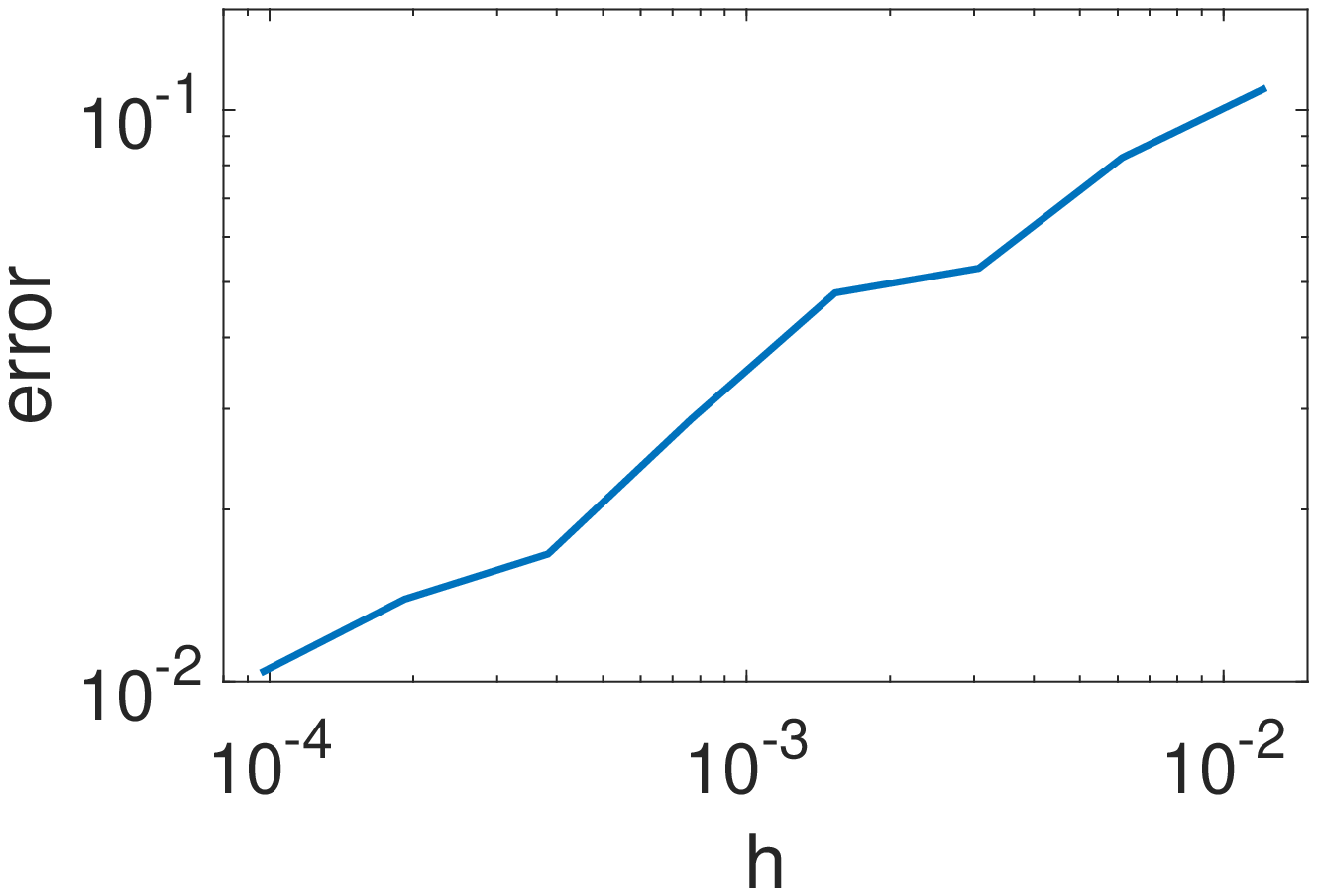,height=4.5cm,width=7cm}
\end{array}$$
\caption{Temporal convergence (left) and spatial convergence (right) of LRI in Example \ref{ex3}.}
\label{fig:whole}
\end{figure}
\begin{example}\label{ex3}(Whole space case)
The final example of the tests is devoted to  verifying the above numerical observations about the convergence of the  LRI scheme on a truncated whole space problem. In (\ref{model}), we take $\lambda=1$ and $\xi(x)$ the same as the pointwisely defined  uniform distribution in (\ref{pw xi}). The initial data is taken as
$$u_0(x)=2\mathrm{sech}(x^2),\quad x\in\bT,$$
where the domain is now chosen as $\bT=(-7\pi,7\pi)$ so that the boundary truncation error is negligible within the time of computation. We take 100 samples of $\xi$ as well, and the reference solution of each is obtained by LRI with $\tau=10^{-4}$ and $N=2^{18}$. The temporal error and the spatial error (\ref{error def}) of the LRI method (\ref{vduhamel3}) at $t_n=T=1$ are plotted in Figure \ref{fig:whole}. The convergence results of the two classical methods are similar as in Example \ref{ex2}, and so they are omitted here for brevity.
\end{example}

As can be seen in Figure \ref{fig:whole}, the numerical observations are totally the same as in Example \ref{ex2} for the temporal and the spatial convergence of the LRI method (\ref{vduhamel3}).

\begin{figure}[t!]
$$\begin{array}{cc}
\psfig{figure=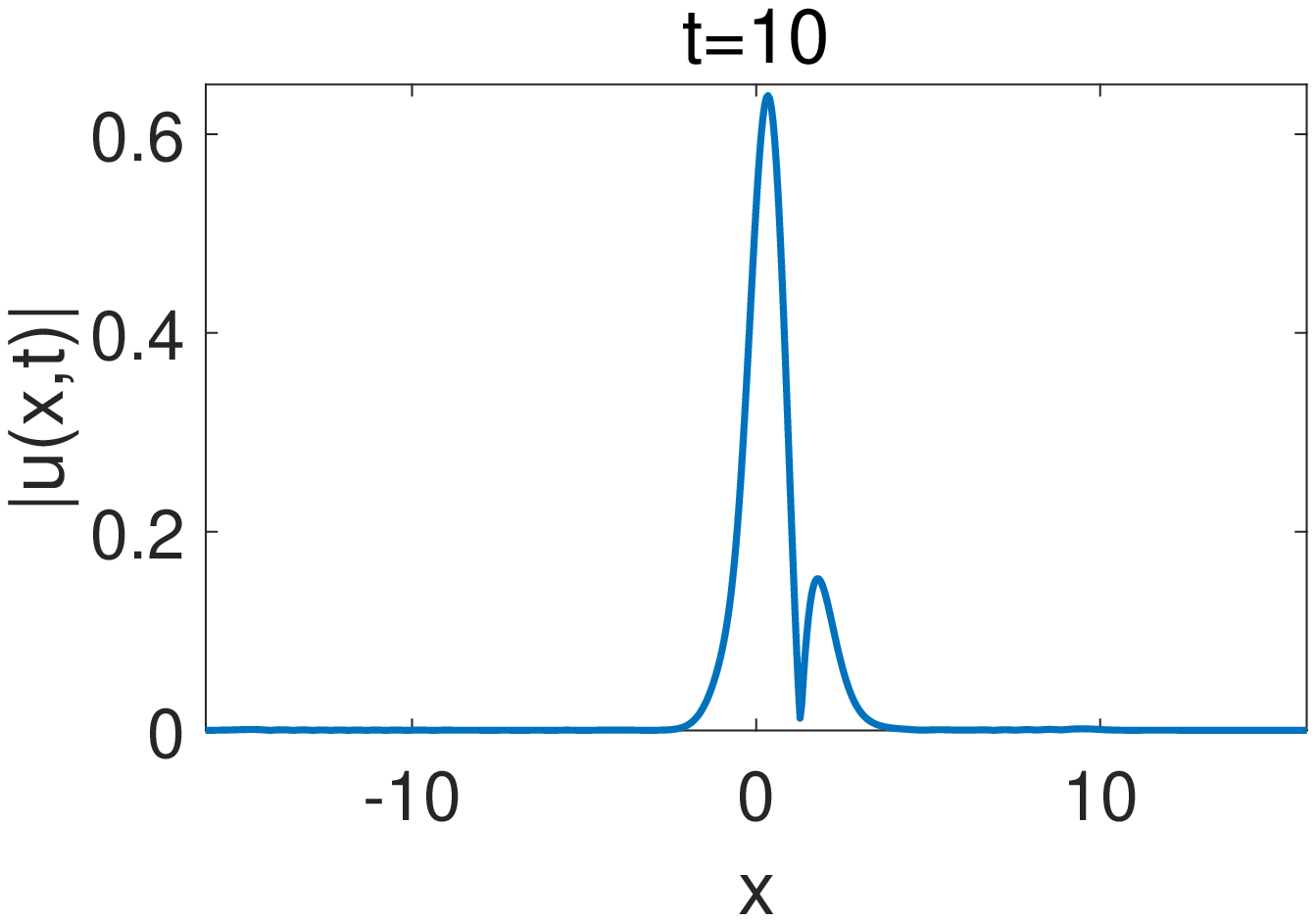,height=4.5cm,width=7cm}&
\psfig{figure=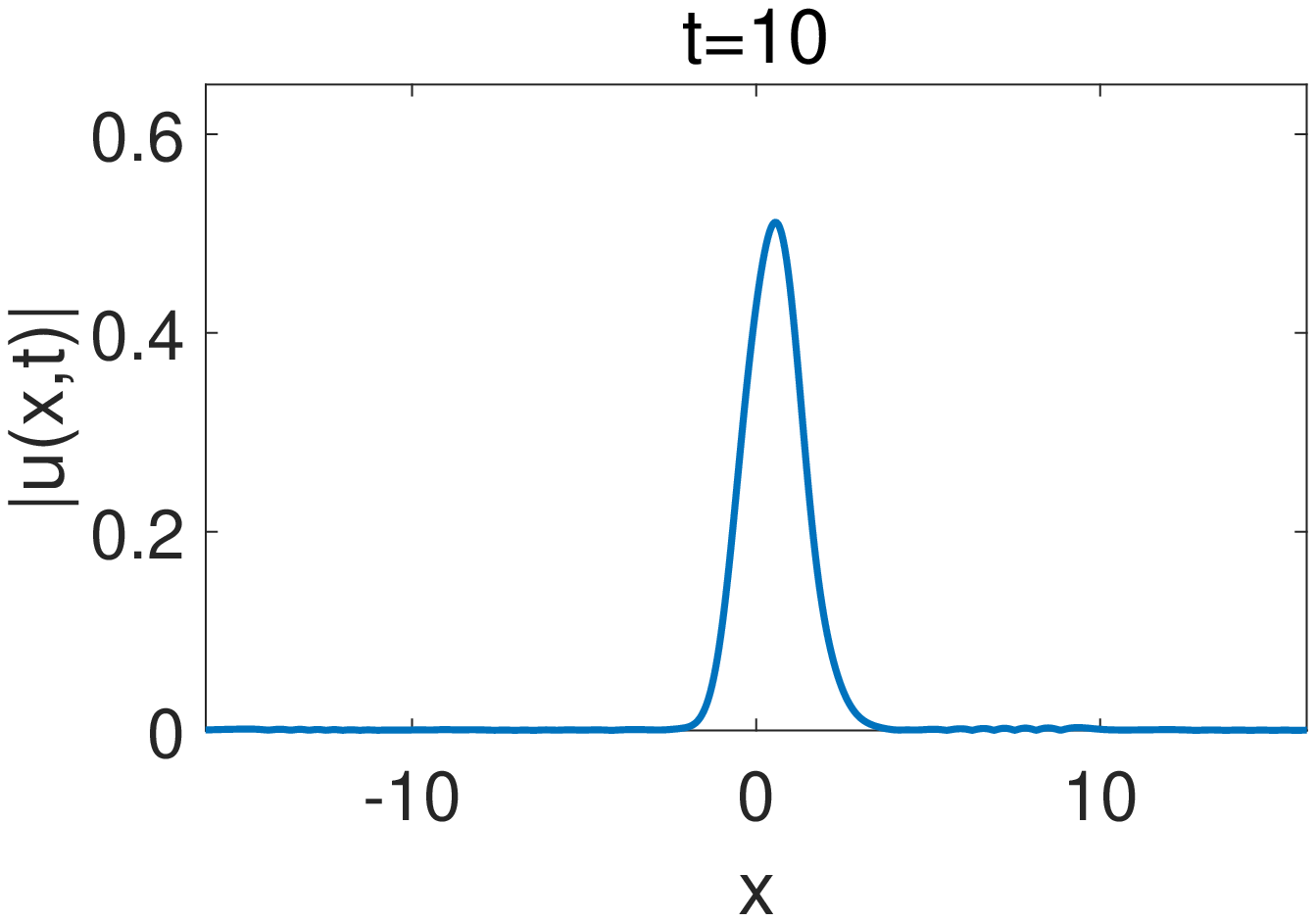,height=4.5cm,width=7cm}\\
\psfig{figure=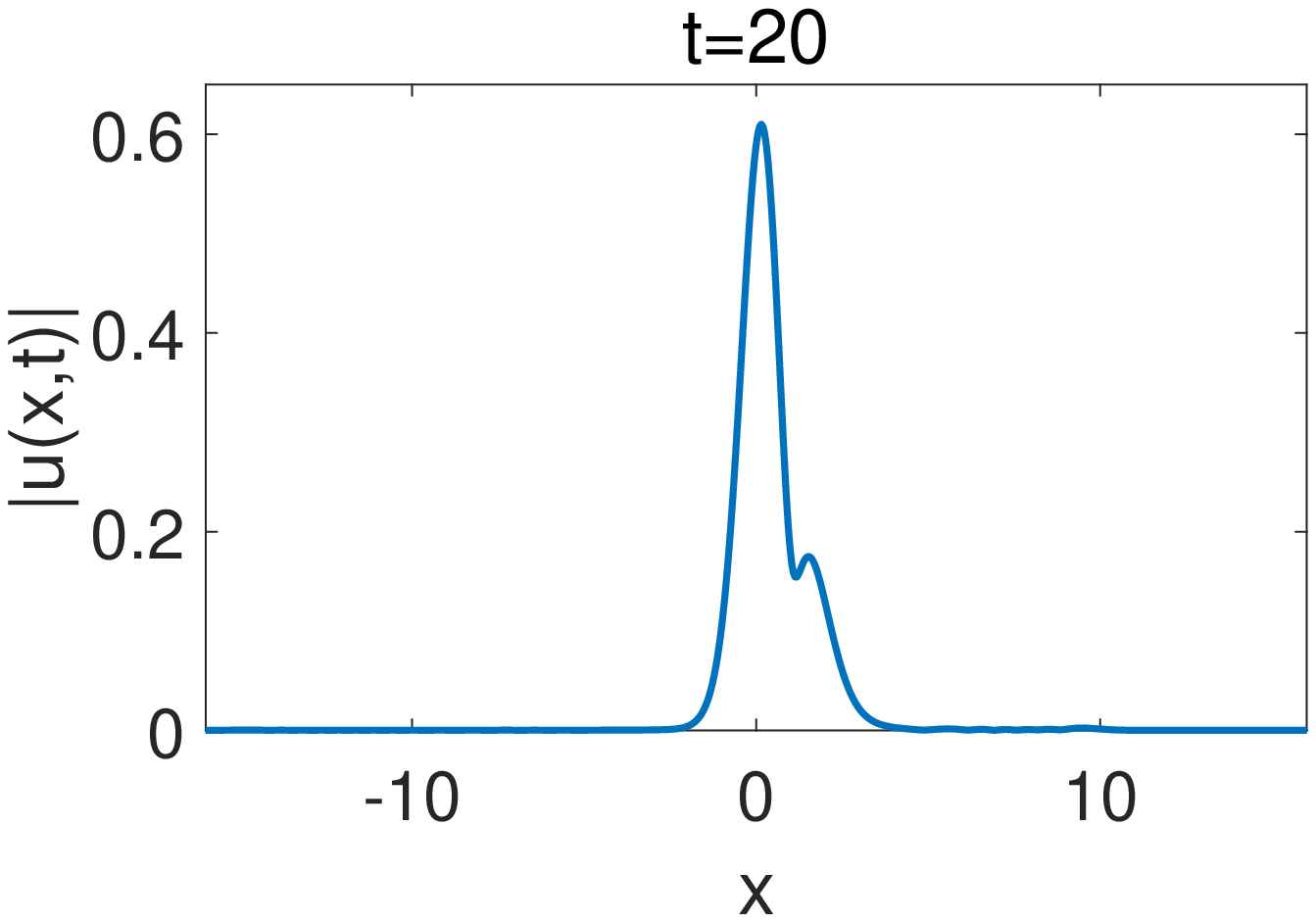,height=4.5cm,width=7cm}&
\psfig{figure=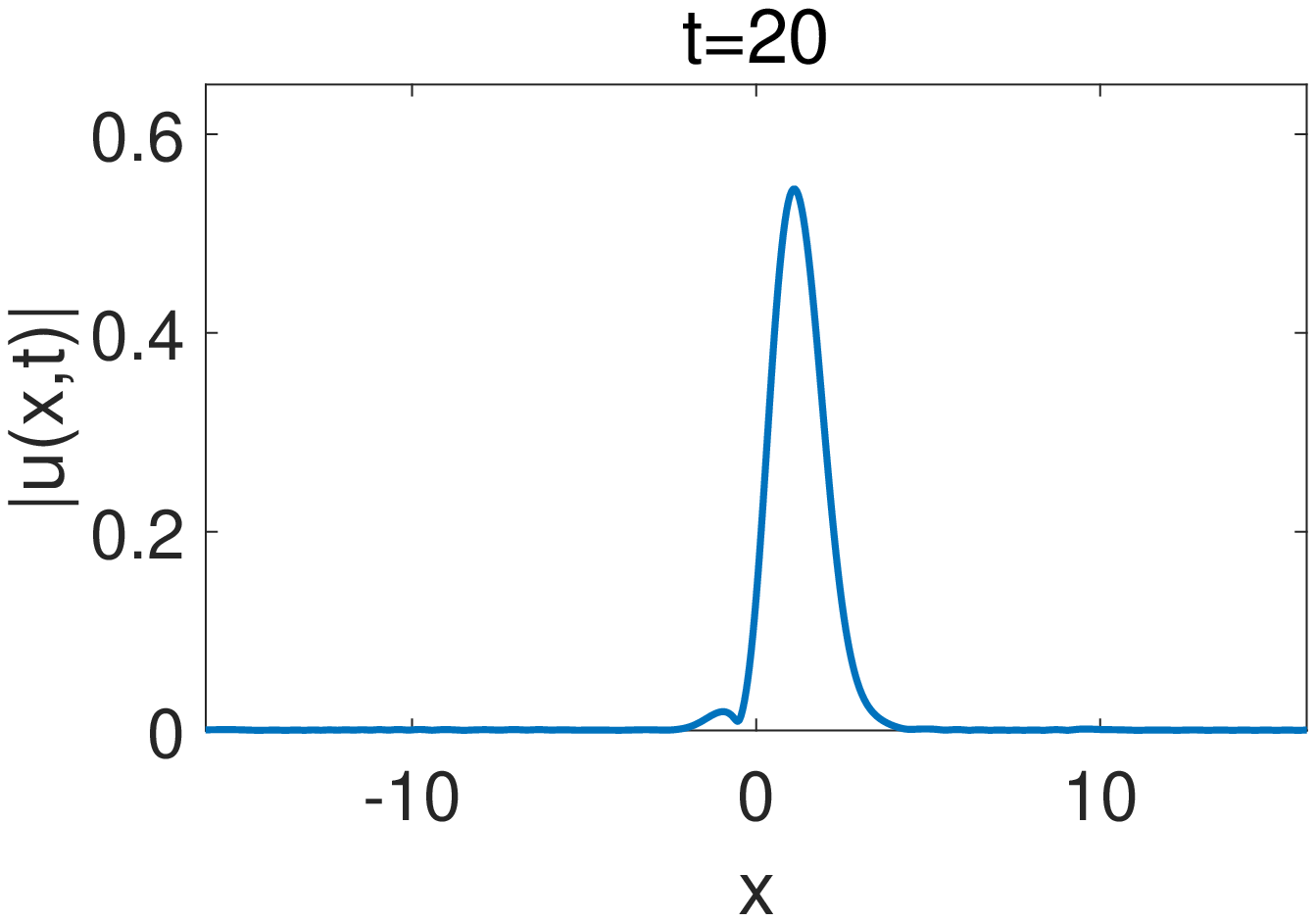,height=4.5cm,width=7cm}
\end{array}$$
\caption{Profile of $|u(x,t)|$ in Section \ref{sec:last num} at $t=10$ and $t=20$ under the linear case (left) and the nonlinear case (right).}
\label{fig:dynamics}
\end{figure}

\subsection{A simulation of localization/spreading}\label{sec:last num} Now, by applying the LRI method, we perform a numerical experiment to show the dynamics of an initially localized wave in (\ref{model}). We take the initial data
$$u_0(x)=\frac{1}{\sqrt{\pi}}\fe^{-x^2},\quad x\in \bT,$$
with the domain $\bT=(-256\pi,256\pi)$. The number of spatial grids is $N=2^{15}$, and the random potential (one sample) is constructed as
$$\xi(x)=\frac{13}{\sqrt{N_0}}
\sum_{l=-N_0/2}^{N_0/2-1}\left[\mathrm{rand}(N_0,1)+i\,\mathrm{rand}(N_0,1)\right]
\fe^{i\mu_l(x+L)}+c.c.,\quad x\in\bT,$$
with $N_0=N/64$. We use the  LRI scheme (\ref{vduhamel3}) with time step $\tau=0.0025$ to solve (\ref{model}) till $T=20$. The Figure \ref{fig:dynamics} shows the solution $|u(x,t)|$ at different $t$ under the linear case ($\lambda=0$) and the nonlinear case ($\lambda=10$).

We can see that in Figure \ref{fig:dynamics}, the usual spreading of waves in the Schr\"odinger equation is suppressed by the random potential, and the solutions of (\ref{model}) in both linear and nonlinear cases are localized up to the time of simulation in this example. We also remark that in our simulation of the nonlinear case, although the main part of the waves are localized as in the Figure \ref{fig:dynamics}, we observed some tiny radiations towards the boundary, and this motivates us to use the large size for the domain.
Note that the possible spreading or the so-called weak turbulence in the D-NLS (\ref{model}) could take place at a very large time, e.g. $t=10^{8}$ as showed for the discrete model (\ref{discrete}) in \cite{prl2,prl1}. The accurate numerical simulation of (\ref{model}) up to such scale of time requests much more future efforts to address problems like boundary truncations and the long-time behaviour of the scheme.

\section{Conclusion}\label{sec:con}
In this work, we have addressed the accuracy problem of the numerical integrators for solving the continuous disordered nonlinear Schr\"odinger equation with a spatial random potential on an one-dimensional torus. The random potential can be very rough, which introduces roughness also to the solution. We showed that classical numerical  integrators suffer from severe accuracy order reductions due to the presence of the rough potential, and their approximations are much less accurate than in the usual smooth setup.  By applying a low-regularity Fourier integrator (LRI) to this problem, where particular efforts were made to integrate the potential term, we showed that the finite time accuracy can be raised,  compared to classical methods. In particular, a second order temporal convergence rate of LRI in $L^2$-norm was established rigorously for potentials in $H^2$. For potentials in $L^2$ and some other popular examples  used in the applications of disordered problem, a convergence rate of LRI between one and two were observed.  Many numerical experiments were provided to illustrate the performance of the schemes with comparisons made under different types of the random potentials.

\appendix
\section{Schemes for the discrete model}\label{sec:app}

The splitting for (\ref{model2}) reads
\begin{equation}\label{sub11}
\Phi_T^t:\quad \left\{
\begin{split}
&i\dot{v}_l(t)=-J[ v_{l+1}(t)+v_{l-1}(t)],\quad  t>0,\ -N\leq l\leq N,\\
&v_l(0)=v_l^0, \quad -N\leq l\leq N,
\end{split}\right.
\end{equation}
and
\begin{equation}\label{sub22}
\Phi_V^t:\quad\left\{
\begin{split}
&i\dot{w}_l(t)=
\left(\xi_l+\lambda|w_l(t)|^2\right)w_l(t), \quad t>0,\ -N\leq l\leq N,\\
&w_l(0)=w_l^0,\quad -N\leq l\leq N.
\end{split}\right.
\end{equation}
For $\Phi_T^t$, by taking Fourier transform
$$\widehat{v}_j(t)=\frac{1}{2N}\sum_{l=-N}^{N-1}\fe^{-ijl\pi/N}v_l(t),\ -N\leq j<N,\quad
v_l(t)=\sum_{j=-N}^{N-1}\fe^{ijl\pi/N}\widehat{v}_j(t),\ -N\leq l<N,$$
the equation in (\ref{sub11}) is diagonalized and the exact solution is
$$v_l(t)=\sum_{j=-N}^{N-1}\fe^{2itJ\cos(j\pi /N)}
\fe^{ijl\pi/N}\widehat{(v^0)}_j, \quad t\geq0,\ -N\leq l\leq N,$$
with $v^0=(v^0_{-N},\ldots,v^0_{N})$. For $\Phi_T^t$, noting that $|w_l(t)|$ is constant in $t$ for all $l$ in (\ref{sub22}), so the exact solution is
$$w_l(t)=\fe^{-it(\xi_l+\lambda|w_l^0|^2)}w_l^0,\quad t\geq0,\ -N\leq l\leq N.$$
Then the Strang splitting scheme can be written down same as the composition in (\ref{strang}).

The corresponding semi-implicit finite difference method is simply
\begin{align*}
 &i\frac{u_l^{n+1}-u_l^{n-1}}{2\tau}=-\frac{J}{2}
\left(u_{l+1}^{n+1}+u_{l-1}^{n+1}+u_{l+1}^{n-1}+u_{l-1}^{n-1}\right)
+\xi_lu_l^{n}+\lambda|u_l^n|^2u_l^n,\quad n\geq1,
\end{align*}
for $-N\leq l<N$ with
\begin{align*}
&u_l^1=u^0_l+iJ\tau\left(u^0_{l+1}+u^0_{l-1}\right)
-i\tau\left(\xi_l+\lambda|u^0_l|^2\right)u^0_l,\quad -N\leq l\leq N,\\
&u^n_{-N}=u^n_{N},\quad u^n_{-N-1}=u^n_{N-1},\quad n\geq0.
\end{align*}

\section*{Acknowledgements}
X. Zhao is partially supported by the NSFC 11901440 and the Natural Science Foundation of Hubei Province No. 2019CFA007. We thank Prof. Avy Soffer for introducing the disordered problem and thank Prof. Katharina Schratz for communications on the numerical method.

\bibliographystyle{model1-num-names}

\end{document}